\documentclass[11pt, letterpaper]{amsart}
\usepackage[plainpages=false,pdfpagelabels,
colorlinks=true,linkcolor=blue,
citecolor=blue,filecolor=blue,      
urlcolor=blue,hypertexnames=false]{hyperref}
\usepackage{amssymb}
\usepackage{graphics}
\usepackage{graphicx}               % Necessary to use \scalebox
\usepackage{latexsym}
\usepackage{amsmath}
\usepackage{amssymb,amsthm,amsfonts}
\usepackage{amscd}
\usepackage[arrow, matrix, curve]{xy}
\usepackage{syntonly}
\usepackage{tikz}
\usetikzlibrary{quotes}
\usepackage{tikz-cd}
\usepackage{enumitem}
\usepackage{todonotes}
\usepackage{hyperref,cleveref}

\usepackage[small,nohug,heads=vee]{diagrams}
\diagramstyle[labelstyle=\scriptstyle]
\usepackage{cancel}
\makeatletter
\@namedef{subjclassname@2020}{\textup{2020} Mathematics Subject Classification}
\makeatother

%\usepackage[garamond]{mathdesign}
%\usepackage{times}
%\usepackage{fourier}
%\usepackage{mathpazo}
%\usepackage[bitstream-charter]{mathdesign} %charter字体
%\usepackage[T1]{fontenc}
%\setlength\textwidth{420pt}
%\setlength\oddsidemargin{25pt}
%\setlength\evensidemargin{25pt}

%\linespread{1} %行间距设为1.2倍

%设置页眉页脚
%\usepackage{fancyhdr}
%\pagestyle{fancy}
%\lhead{}
%\rhead{\nouppercase{\leftmark}}
%\renewcommand{\sectionmark}[1]{\markboth{#1}{}}

%\newtheorem{theorem}{{\heiti Theorem}}[section]
%\newtheorem{definition}{{\heiti Definition}}[section]
%\newtheorem{corollary}{{\heiti Corollary}}[section]
%\newtheorem{lemma}{{\heiti Lemma}}[section]
%\newtheorem{remark}{{\heiti Remark}}[section]
%\newtheorem{exer}{{\heiti Exercise}} [section]
%\newtheorem{example}{{\heiti Example}}[section]
%\newtheorem{method}{{\heiti Method}}[section]
%\newtheorem{proposition}{{\heiti Proposition}}[section]

%%Macros
\newcommand{\cCV}{\mathcal{CV}}
\newcommand{\Spec}{\mathrm{Spec}}

%%For authors

\usepackage{ulem}

\theoremstyle{plain}
\newtheorem{thm}{Theorem}[section]
\newtheorem{theorem}[thm]{Theorem}
\newtheorem*{theorem*}{Main Theorem}
\newtheorem{lemma}[thm]{Lemma}
\newtheorem{corollary}[thm]{Corollary}

%%%%%%%%%%%%%%%%%%%% Text roman %%%%%%%%%%%%%%%%%%%%%%%%%%%%%
\theoremstyle{definition}

\newtheorem{remark}[thm]{Remark}

\newtheorem{conjecture}[thm]{Conjecture}
%\numberwithin{equation}{thm}

\title[On the normality of commuting scheme]{On the normality of commuting scheme for general linear Lie algebra}

\author{Artan Sheshmani \textsuperscript{1,2,3}}
\email{artan@mit.edu}
\author{Xiaopeng Xia \textsuperscript{1}}
\email{xiaxiaopeng@bimsa.cn}
\author{Beihui Yuan \textsuperscript{1}}
\email{beihuiyuan@bimsa.cn}
\address{\textsuperscript{1}Beijing Institute of Mathematical Sciences and Applications, No. 544, Hefangkou Village, Huaibei Town, Huairou District, Beijing 101408}
\address{\textsuperscript{2}Massachusetts Institute of Technology, IAiFi Institute, 77 Massachusetts Ave, 26-555. Cambridge, MA 02139}
\address{\textsuperscript{3}National Research University, Higher School of Economics, Russian Federation, Laboratory of Mirror Symmetry, NRU HSE, 6 Usacheva str., Moscow, Russia, 119048}

\date{}

\begin{document}
\begin{abstract}
The commuting scheme $\mathfrak{C}^d_{\mathfrak{g}}$ for reductive Lie algebra $\mathfrak{g}$ over an algebraically closed field $\mathbb{K}$ is the subscheme of $\mathfrak{g}^d$ defined by quadratic equations, whose $\mathbb{K}$-valued points are $d$-tuples of commuting elements in $\mathfrak{g}$ over $\mathbb{K}$. There is a long-standing conjecture that the commuting scheme $\mathfrak{C}^d_{\mathfrak{g}}$ is reduced. Moreover, a higher-dimensional analogue of Chevalley restriction conjecture was conjectured by Chen-Ng\^{o}. We show that the commuting scheme $\mathfrak{C}^2_{\mathfrak{gl}_n}$ is Cohen–Macaulay and normal.
As corollary, we prove a 2-dimensional Chevalley restriction theorem for general linear group in positive characteristic.
 \end{abstract}
% \subjclass[2020]{}
%\keywords{Commuting Scheme, Cohen–Macaulay, Reduced, Flat}
\maketitle 
\smallskip
\noindent \textbf{MSC codes.} 
Primary 14L30, Secondary 13C14, 13H10, 14B25, 15A30, 20G05

\noindent \textbf{Keywords.} Commuting Scheme, Cohen–Macaulay rings, Reducedness, Flat morphisms.
\tableofcontents

\section{Introduction}

\subsection{Commuting scheme}\ 

Let $G$ be a reductive group over an algebraically closed field $\mathbb{K}$ with Lie algebra $\mathfrak{g}$. For an integer $d\geq 2$, let $\mathfrak{C}^d_{\mathfrak{g}}\subset \mathfrak{g}^d$ be the commuting scheme, which is defined as the scheme-theoretic fiber of the commutator map over the zero
\[
\mathfrak{g}^d\rightarrow \prod\limits_{i<j} \mathfrak{g}, \ \ (x_1,\cdots, x_d)\mapsto \prod_{i<j}[x_i, x_j].
\] 
Its underlying variety (the reduced induced closed subscheme) $\mathfrak{C}^d_{\mathfrak{g},red}$ is called the commuting variety. As a set, $\mathfrak{C}^d_{\mathfrak{g},red}$ consists of $d$-tuples $(x_1,\cdots, x_d)\in \mathfrak{g}^d$ such that $[x_i,x_j]=0$, for all  $1\leq i,j\leq d$.

There is a long-standing conjecture
\begin{conjecture}\label{conj: reduced}
The commuting scheme $\mathfrak{C}^d_{\mathfrak{g}}$ is reduced. 
\end{conjecture}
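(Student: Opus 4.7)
The plan is to attack Conjecture~\ref{conj: reduced} via the stronger property of normality, since any normal Noetherian scheme is automatically reduced. I would begin with the tractable case $d = 2$ and $\mathfrak{g} = \mathfrak{gl}_n$, for which the underlying commuting variety is already known to be irreducible by the classical work of Motzkin--Taussky and Gerstenhaber. The strategy is then to apply Serre's criterion for normality, $R_1 + S_2$: the property $S_2$ is subsumed by Cohen--Macaulayness, and $R_1$ amounts to showing that the singular locus of $\mathfrak{C}^2_{\mathfrak{gl}_n}$ has codimension at least two.

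For the $R_1$ part, I would first exhibit an explicit smooth open subset. A pair $(x,y)\in \mathfrak{C}^2_{\mathfrak{gl}_n}$ is a smooth point whenever $x$ is regular, that is whenever the centralizer $\mathfrak{g}^x$ has minimal dimension $n$: one computes the tangent space via the differential of the commutator map and checks that it has the correct dimension $n^2 + n$. A stratification by the Jordan type of the first component then reduces $R_1$ to a dimension count on the locus where neither $x$ nor $y$ is regular, which I would bound using the well-understood geometry of Jordan strata in $\mathfrak{gl}_n$.

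For the Cohen--Macaulay property, which I expect to be the principal obstacle, the most promising approach is a flat-family argument. One can try either a Gr\"obner degeneration to a combinatorial initial ideal and verify Cohen--Macaulayness of the degeneration, or exploit the natural map from the isospectral Hilbert scheme of $n$ points on $\mathbb{A}^2$ (following Haiman) in order to transfer Cohen--Macaulayness along a finite flat morphism. The difficulty is that the defining ideal is very far from a complete intersection: the naive codimension coming from the $n^2$ commutator equations would be $n^2$, but the actual codimension is $n^2 - n$, so depth estimates via a regular sequence cannot be obtained directly and one must argue globally, for instance by producing an explicit resolution or by checking depth stratum-by-stratum.

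Once normality is established in the case $d = 2$ and $\mathfrak{g} = \mathfrak{gl}_n$, one hopes to leverage the resulting two-dimensional Chevalley restriction theorem as a template for other classical $\mathfrak{g}$, and eventually for $d \geq 3$. The latter extension is substantially harder because, even at the level of underlying varieties, irreducibility of $\mathfrak{C}^d_{\mathfrak{g}}$ fails in general once $d$ and $n$ are large enough, so the Serre criterion must be applied component by component; the full statement of Conjecture~\ref{conj: reduced} therefore likely requires ideas beyond the strategy sketched here, and in this paper I will be content with the $d = 2$, $\mathfrak{g} = \mathfrak{gl}_n$ case.
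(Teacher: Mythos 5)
Your high-level strategy is exactly the one the paper follows for the only case it settles: restrict to $d=2$ and $\mathfrak{g}=\mathfrak{gl}_n$, aim for normality (which implies reducedness), and split it via Serre's criterion into a Cohen--Macaulay part and an $R_1$ part. Your treatment of $R_1$ also matches the paper's: the smooth locus contains all $(x,y)$ with $x$ (or $y$) non-derogatory, which the paper makes precise via the Neubauer--Saltman criterion that $(A,B)$ is a regular point iff the joint centralizer has dimension $n$; the paper then bounds the complementary ``both derogatory'' locus using Basili's dimension formula rather than a hand-built Jordan stratification, but this is the same idea. You also correctly flag that the commutator ideal has $n-1$ more generators than its codimension, and that the whole enterprise does not generalize to $d\geq 3$ because irreducibility already fails, so the conjecture in its full generality cannot be proved this way; the paper makes the same restriction.

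The genuine gap is the Cohen--Macaulay step, which you correctly identify as ``the principal obstacle'' but for which you propose only speculative routes. Neither of your two suggestions is known to work. A Gr\"obner degeneration of the commutator ideal to a combinatorially tractable Cohen--Macaulay initial ideal is not available in the literature and is not exhibited here; since the commutator ideal is not a complete intersection, one cannot simply count generators. The isospectral Hilbert scheme route is also problematic: Haiman's theorem gives Cohen--Macaulayness of $X_n$, but there is no finite flat morphism between $X_n$ and $\mathfrak{C}^2_{\mathfrak{gl}_n}$ along which Cohen--Macaulayness transfers; the natural relations (via cyclic pairs of commuting matrices to $\mathrm{Hilb}^n(\mathbb{A}^2)$, or via $S^n\mathbb{A}^2$) are only defined on open subsets or point in the wrong direction. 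The paper's actual proof is entirely different: it is an induction on $n$, organized around a chain of auxiliary rings $\Tilde{R}(n)$, $R_1(n)$, $R'(n)$, $R_2(n)$ that interpolate between $R(n-1)$ and $R(n)$. The engine of the induction is repeated use of ``if $A/tA$ is Cohen--Macaulay and $t$ is a nonzerodivisor then $A$ is Cohen--Macaulay'' (Lemma~\ref{lemma: A/t CM to A CM}), with the nonzerodivisor hypothesis verified by $GL$-equivariance arguments establishing flatness away from a point plus a valuative criterion (Lemma~\ref{lemma of flat}), and with a substantial amount of dimension bookkeeping (Lemmas~\ref{lemma: dim(Tilde(R)/(vu,t))}, \ref{lemma: dim(Tilde(R)/(t))}) to keep the cuts of the right codimension. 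None of this appears in your proposal, so the Cohen--Macaulay step as you have sketched it does not go through.
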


For $d=2$ and $\mathfrak{gl}_n$, Motzkin-Taussky \cite{Motzkin-Taussky} and Gerstenhaber \cite{Gerstenhaber} independently proved that the variety $\mathfrak{C}^2_{\mathfrak{gl}_n,red}$ is irreducible.
For $d=2$, Richardson in 1979 \cite{Richardson} proved that the variety $\mathfrak{C}^2_{\mathfrak{g},red}$ is irreducible in characteristic $0$. In positive characteristics, Levy \cite{Levy} showed that $\mathfrak{C}^2_{\mathfrak{g},red}$ is irreducible under certain mild assumptions on $G$. However, for $d\geq 3$, $\mathfrak{C}^d_{\mathfrak{g}}$ is in general reducible.

Moreover, Popov \cite{Popov} showed that the singular locus of $\mathfrak{C}^2_{\mathfrak{g},red}$ has codimension $\geq 2$ in characteristic $0$, and Ngo \cite{Ngo-commuting} proved that the variety $\mathfrak{C}^d_{\mathfrak{gl}_2,red}$ is Cohen-Macaulay and normal in characteristics $\neq 2$.
However, Conjecture \ref{conj: reduced} is in general not verified.

On the other hand, we can consider the categorical quotient $\mathfrak{C}^d_{\mathfrak{g}}/\!\!/G$.
Here $G$ acts on $\mathfrak{g}^d$ by the diagonal adjoint action, and the subscheme $\mathfrak{C}^d_{\mathfrak{g}}$ of $\mathfrak{g}^d$ is stable under $G$, which means that the restriction of the action $\mu:G\times_\mathbb{K}\mathfrak{g}^d\to \mathfrak{g}^d$ to $G\times_\mathbb{K}\mathfrak{C}^d_{\mathfrak{g}}$ factors through $\mathfrak{C}^d_{\mathfrak{g}}\hookrightarrow \mathfrak{g}^d$. 
Let $T$ be a maximal torus of $G$ and $\mathfrak{t}$ be the Lie algebra of $T$, and let the Weyl group $W:=N_G(T)/T$ act on $\mathfrak{t}^d$ diagonally. The embedding $\mathfrak{t}^d\hookrightarrow \mathfrak{g}^d$ factors through $\mathfrak{C}^d_{\mathfrak{g}}$ and induces the natural morphism 
\[\Phi: \mathfrak{t}^d/\!\!/W \rightarrow \mathfrak{C}^d_{\mathfrak{g}}/\!\!/ G.\]

In studying the Hitchin morphism from the moduli stack of principal $G$-Higgs bundles on a proper smooth variety $X$ of dimension $d\ge 2$, Chen and Ng\^{o} \cite{Chen-Ngo} are led to

\begin{conjecture}[Chen-Ng\^{o}]\label{conj}
The morphism $\Phi: \mathfrak{t}^d/\!\!/W\rightarrow \mathfrak{C}^d_{\mathfrak{g}}/\!\!/G$
is an isomorphism. 
\end{conjecture}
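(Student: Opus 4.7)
The plan is to prove the conjecture in the case relevant to this paper, namely $d = 2$ and $\mathfrak{g} = \mathfrak{gl}_n$, by reducing it to the main theorem of the paper that $\mathfrak{C}^2_{\mathfrak{gl}_n}$ is normal. The approach is the classical one for Chevalley-type restriction theorems: realize $\Phi$ as a finite birational morphism between normal affine varieties and invoke Zariski's main theorem.

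First I would establish normality of both source and target. The source $\mathfrak{t}^2/\!\!/W$ is normal because $\mathfrak{t}^2$ is smooth and $W = S_n$ is finite, so $\mathbb{K}[\mathfrak{t}^2]^W$ is a direct summand of a polynomial ring. For the target, I would combine the main theorem that $\mathfrak{C}^2_{\mathfrak{gl}_n}$ is normal with geometric reductivity of $GL_n$: the invariants $\mathbb{K}[\mathfrak{C}^2_{\mathfrak{gl}_n}]^{GL_n}$ form a pure subring of $\mathbb{K}[\mathfrak{C}^2_{\mathfrak{gl}_n}]$, so normality descends and $\mathfrak{C}^2_{\mathfrak{gl}_n}/\!\!/GL_n$ is normal.

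Next I would verify birationality and finiteness of $\Phi$. Birationality is a consequence of the irreducibility of $\mathfrak{C}^2_{\mathfrak{gl}_n, red}$ (Motzkin-Taussky, Gerstenhaber) and the density of the locus $U$ where the first coordinate is regular semisimple: on $U$ every commuting pair lies in a unique maximal torus, so its $GL_n$-orbit meets $\mathfrak{t}^2$ in a single $W$-orbit, yielding bijectivity of $\Phi$ on a dense open; a dimension count (both sides have dimension $2n$) then promotes this to birationality. For finiteness, the ring map $\Phi^{*}\colon \mathbb{K}[\mathfrak{C}^2_{\mathfrak{gl}_n}]^{GL_n} \hookrightarrow \mathbb{K}[\mathfrak{t}^2]^W$ is injective by density of $GL_n \cdot \mathfrak{t}^2$, and one exhibits integrality using explicit invariants, for instance the coefficients of $\det(T\, I_n - s x - t y)$ in the formal variables $s, t, T$, whose restrictions to $\mathfrak{t}^2$ produce an integral generating set. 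With these pieces in place, Zariski's main theorem applied to the finite birational morphism $\Phi$ into the normal target forces $\Phi$ to be an isomorphism.

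The main obstacle is unambiguously the normality of $\mathfrak{C}^2_{\mathfrak{gl}_n}$ itself, the central technical contribution of the paper; prior work either relied on characteristic zero (Popov's codimension-$2$ singular locus, Richardson's irreducibility) or was confined to very low rank (Ng\^{o}'s $\mathfrak{gl}_2$ result). A secondary subtlety, specific to positive characteristic, is verifying that the polarizations of elementary symmetric functions are enough to establish finiteness of $\Phi^{*}$: in characteristic zero this is immediate from Weyl's polarization theorem, whereas in small characteristic one must either produce additional invariants or argue via the integral closure of $\Phi^{*}(\mathbb{K}[\mathfrak{C}^2_{\mathfrak{gl}_n}]^{GL_n})$ inside $\mathbb{K}[\mathfrak{t}^2]^W$ directly, using the normality input rather than any explicit averaging.
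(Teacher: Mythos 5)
Your proposal takes a genuinely different route from the paper's. In the paper this statement is a \emph{conjecture}, and only the case $d=2$, $G=GL_n$, $\mathrm{char}\,\mathbb{K}>0$ is actually proved (Corollary~\ref{cor: restriction}); the argument there is a one-liner: the Main Theorem gives that $\mathfrak{C}^2_{\mathfrak{gl}_n}$ is reduced, so $\mathfrak{C}^2_{\mathfrak{gl}_n}=\mathfrak{C}^2_{\mathfrak{gl}_n,red}$, and Vaccarino's Theorem~3 (the weak restriction theorem $\mathfrak{t}^2/\!\!/W\xrightarrow{\sim}\mathfrak{C}^2_{\mathfrak{gl}_n,red}/\!\!/GL_n$, valid in all characteristics) finishes it. In particular the paper uses only reducedness of $\mathfrak{C}^2_{\mathfrak{gl}_n}$, not normality. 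Your route instead invokes the full strength of normality, descends it to the GIT quotient, and runs a Zariski Main Theorem argument (finite $+$ birational $+$ normal target $\Rightarrow$ isomorphism). This is a self-contained strategy that avoids citing Vaccarino, and it correctly identifies where the weight should fall; but as written it has a real gap, and it spends a stronger hypothesis than the paper needs.

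The gap is in the birationality step. In positive characteristic, set-theoretic bijectivity of $\Phi$ on a dense open plus a dimension count does not imply birationality: the Frobenius endomorphism of $\mathbb{A}^1$ is bijective on closed points of degree one in dimension, yet has inseparable degree $p$. To get birationality you need a scheme-theoretic statement: over the open $U\subset\mathfrak{C}^2_{\mathfrak{gl}_n}$ where the first matrix is regular semisimple, the map $GL_n\times^{N_{GL_n}(T)}(\mathfrak{t}^{rs}\times\mathfrak{t})\to U$ is an isomorphism of schemes (this uses that the scheme-theoretic centralizer of a regular semisimple element of $\mathfrak{gl}_n$ is exactly $T$, and that $\mathfrak{C}^2_{\mathfrak{gl}_n}$ is smooth along $U$), and passing to GIT quotients over $U$ exhibits $\Phi$ as an isomorphism over the corresponding open. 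Your sketch stops at set-theoretic bijectivity and does not address separability, which is exactly the point where positive characteristic can bite.

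Two smaller remarks. The ``pure subring'' justification for normality descent is specific to linearly reductive groups; $GL_n$ is not linearly reductive in characteristic $p$, so there is no Reynolds operator and no splitting. Normality does still descend, but the correct argument is the integral-closure one: $\mathbb{K}[\mathfrak{C}^2]^{GL_n}=\mathbb{K}[\mathfrak{C}^2]\cap\mathrm{Frac}(\mathbb{K}[\mathfrak{C}^2]^{GL_n})$ inside $\mathrm{Frac}(\mathbb{K}[\mathfrak{C}^2])$, and an element of $\mathrm{Frac}(\mathbb{K}[\mathfrak{C}^2]^{GL_n})$ integral over $\mathbb{K}[\mathfrak{C}^2]^{GL_n}$ is integral over $\mathbb{K}[\mathfrak{C}^2]$, hence lies in it by normality, hence lies in the invariants. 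Conversely, your worry at the end about polarizations in small characteristic is unnecessary for the \emph{finiteness} step: the coefficients of the two single-variable characteristic polynomials $\det(TI_n-x)$ and $\det(TI_n-y)$ are already $GL_n$-invariant and make every coordinate $x_i,y_i$ of $\mathfrak{t}^2$ integral over the image of $\Phi^*$, so $\mathbb{K}[\mathfrak{t}^2]^W$ is a finite $\Phi^*(\mathbb{K}[\mathfrak{C}^2_{\mathfrak{gl}_n}]^{GL_n})$-module in any characteristic. Polarizations would only be needed to prove \emph{surjectivity} of $\Phi^*$ directly, which is precisely what ZMT lets you avoid.
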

 When $d=1$ and $\rm{char} \ \mathbb{K} =0$, Conjecture \ref{conj} is simply the classical Chevalley restriction theorem. Since in the context of Higgs bundles, $d$ is the dimension of the underlying variety $X$, we view Conjecture \ref{conj} as a higher-dimensional analogue of Chevalley restriction theorem.

If $\rm{char} \ \mathbb{K} =0$, Conjecture \ref{conj} is  known  to hold for  $G=GL_n(\mathbb{K})$ (Vaccarino \cite{Vaccarino}, Domokos \cite{Domokos}, and later Chen-Ng\^{o} \cite{Chen-Ngo} independently; see also Gan-Ginzburg \cite{Gan-Ginzburg} for case $d=2$), for $G=Sp_{n}(\mathbb{K})$ (Chen-Ng\^{o} \cite{Chen-Ngo-Symplectic}), for $G=O_n(\mathbb{K}),SO_n(\mathbb{K})$ (Song-Xia-Xu \cite{S-X-X}) and for connected reductive groups (Li-Nadler-Yun \cite{L-N-Y:commuting stack} for case $d=2$). A weaker version $\mathfrak{t}^d/\!\!/W\xrightarrow{\sim} \mathfrak{C}^d_{\mathfrak{g},red}/\!\!/G$ is proved by Hunziker \cite{Hunziker} if $G$ is of type $A,B,C,D$ or $G_2$.
If $\rm{char}\ \mathbb{K} >0$, Conjecture \ref{conj} is largely open. However, the weaker version $\mathfrak{t}^d/\!\!/W\xrightarrow{\sim} \mathfrak{C}^d_{\mathfrak{g},red}/\!\!/G$ is proved by Vaccarino \cite{Vaccarino} for $G=GL_n(\mathbb{K})$ and by Song-Xia-Xu \cite{S-X-X} for $G=Sp_n(\mathbb{K}),O_n(\mathbb{K}),SO_n(\mathbb{K})$.

\subsection{Main result and Strategy}\label{sec:main_result_and_strategy}\

Our main result is the following:
\begin{theorem*}\label{main thm in introduction}
Suppose $n\geq 2$, $d=2$, and $G=GL_n(\mathbb{K})$. Then $\mathfrak{C}^2_{\mathfrak{g}}$ is Cohen–Macaulay, integral (i.e. reduced and irreducible) and normal.
\end{theorem*}
\begin{remark}
    Suppose that $\mathrm{char}\ \mathbb{K}$ is good for $G$, $G$ is connected and the derived subgroup of $G$ is simply connected.
    Premet \cite{Premet} showed that the nilpotent variety $\mathfrak{C}^{\mathrm{nil}}(\mathfrak{g})=\{(x,y)\in \mathcal{N}(\mathfrak{g})\times \mathcal{N}(\mathfrak{g})\mid [x,y]=0\}$ is equidimensional, where $\mathcal{N}(\mathfrak{g})$ is the nilpotent variety of $\mathfrak{g}$. Our main theorem  is different from this result since we do not use the nilpotent assumption in our construction.  Furthermore, our result  is regarding the proof of Cohen-Macaulay, reduced and normal properties.
\end{remark}

The main theorem implies that $\mathfrak{C}^2_{\mathfrak{g}}/\!\!/G$ is reduced for $G=GL_n(\mathbb{K})$. By Vaccarino \cite[Theorem 3]{Vaccarino},  we see that
\begin{corollary}\label{cor: restriction}
Suppose $n\geq 2$, $d=2$, $\rm{char}\ \mathbb{K} >0$ and $G=GL_n(\mathbb{K})$. Then $\Phi:\mathfrak{t}^2/\!\!/W\rightarrow \mathfrak{C}^2_{\mathfrak{g}}/\!\!/G$ is an isomorphism.
\end{corollary}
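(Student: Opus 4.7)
The plan is to deduce the corollary directly from the Main Theorem together with the ``reduced version'' of the higher Chevalley restriction theorem for $GL_n$ already established by Vaccarino. The argument is a short composition of two isomorphisms, so I will organize it as three concise observations.

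First, I would use the Main Theorem, which asserts that $\mathfrak{C}^2_{\mathfrak{g}}$ is integral. Integrality implies that the scheme $\mathfrak{C}^2_{\mathfrak{g}}$ is already reduced, so it coincides with its reduced structure $\mathfrak{C}^2_{\mathfrak{g},red}$. Consequently, $\mathbb{K}[\mathfrak{C}^2_{\mathfrak{g}}] = \mathbb{K}[\mathfrak{C}^2_{\mathfrak{g},red}]$, and taking $G$-invariants of both sides yields a canonical identification of GIT quotients $\mathfrak{C}^2_{\mathfrak{g}}/\!\!/G = \mathfrak{C}^2_{\mathfrak{g},red}/\!\!/G$. (Note that since $\mathbb{K}[\mathfrak{C}^2_{\mathfrak{g}}]$ is a domain its subring of invariants is in particular reduced, so no subtle reducedness issue for the quotient arises.)

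Second, I would invoke Vaccarino \cite[Theorem 3]{Vaccarino}, which is valid in arbitrary characteristic and states that for $G = GL_n(\mathbb{K})$ and any $d \geq 1$ the natural morphism $\mathfrak{t}^d/\!\!/W \to \mathfrak{C}^d_{\mathfrak{g},red}/\!\!/G$ is an isomorphism. Specializing to $d = 2$ gives the isomorphism $\mathfrak{t}^2/\!\!/W \xrightarrow{\sim} \mathfrak{C}^2_{\mathfrak{g},red}/\!\!/G$. Third, I would verify that the map $\Phi$ defined in Section \ref{sec:main_result_and_strategy} agrees, under the identification from the first step, with Vaccarino's map: both are induced by the embedding $\mathfrak{t}^2 \hookrightarrow \mathfrak{g}^2$, which factors through $\mathfrak{C}^2_{\mathfrak{g}}$ because pairs of diagonal matrices commute, and which then factors through $\mathfrak{C}^2_{\mathfrak{g},red}$ tautologically because $\mathfrak{t}^2$ is itself reduced. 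Chaining the two isomorphisms yields that $\Phi : \mathfrak{t}^2/\!\!/W \to \mathfrak{C}^2_{\mathfrak{g}}/\!\!/G$ is an isomorphism.

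There is no substantive obstacle here: the Main Theorem (specifically the reducedness half of integrality) does all the work, and Vaccarino's theorem supplies the remaining ingredient for free. The only point requiring any care is the bookkeeping in the third step, namely ensuring that the map $\Phi$ from Section \ref{sec:main_result_and_strategy} matches Vaccarino's map after the canonical identification of quotients; this is immediate from the fact that both arise from the same embedding $\mathfrak{t}^2 \hookrightarrow \mathfrak{C}^2_{\mathfrak{g}}$.
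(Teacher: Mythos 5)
Your proposal is correct and follows essentially the same route as the paper: the reducedness of $\mathfrak{C}^2_{\mathfrak{g}}$ from the Main Theorem identifies $\mathfrak{C}^2_{\mathfrak{g}}/\!\!/G$ with $\mathfrak{C}^2_{\mathfrak{g},red}/\!\!/G$, and Vaccarino's Theorem 3 (valid in positive characteristic) supplies the isomorphism $\mathfrak{t}^2/\!\!/W\xrightarrow{\sim}\mathfrak{C}^2_{\mathfrak{g},red}/\!\!/G$. Your added check that $\Phi$ agrees with Vaccarino's map under this identification is a reasonable bookkeeping step that the paper leaves implicit.
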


\begin{remark}
Conjecture \ref{conj: reduced} and Conjecture \ref{conj} are very important. Conjecture \ref{conj} imply that there exists a $G$-invariant morphism $\mathrm{sd}: \mathfrak{C}^d_{\mathfrak{g}}\to \mathfrak{t}^d/\!\!/W$, which is called the universal spectral data morphism in \cite{Chen-Ngo} and make the following diagram commute:
\[\begin{tikzcd}
	\mathfrak{t}^d & \mathfrak{C}^d_{\mathfrak{g}} \\
	\mathfrak{t}^d/\!\!/W & \mathfrak{C}^d_{\mathfrak{g}}/\!\!/G
	\arrow[from=1-1, to=1-2]
	\arrow[from=1-1, to=2-1]
	\arrow["\mathrm{sd}"', from=1-2, to=2-1]
	\arrow[from=1-2, to=2-2]
	\arrow[from=2-1, to=2-2]
\end{tikzcd}\]
The existence of $sd$ would be important to the study of the Hitchin morphism.
Our main result implies the Cohen-Macaulay and reduced property of $\mathfrak{C}^2_{\mathfrak{gl}_n}$, thus implies Corollary \ref{cor: restriction} by \cite[Theorem 3]{Vaccarino}.
\end{remark}

%Let $\mathbb{K}$ be an algebraically closed field. 
For any $n\geq 1$, let
 \begin{equation*}
     R(n)=\mathbb{K}[x_{ij},y_{ij}\mid 1\leq i,j\leq n]/I(n),
 \end{equation*} 
 where $I(n)$ is the ideal generated by all of the entries of the matrix $[X_n,Y_n]$, where $X_n=(x_{ij})_{1\leq i,j\leq n},Y_n=(y_{ij})_{1\leq i,j\leq n}$. 

%For any $n\geq 1$, let $R(n)=\mathbb{K}[x_{ij},y_{ij}\mid 1\leq i,j\leq n]/I(n)$, where $I(n)$ is the ideal of $R(n)$ generated by all of the entries of the matrix $[X_n,Y_n]$, where $X_n=(x_{ij})_{1\leq i,j\leq n},Y_n=(y_{ij})_{1\leq i,j\leq n}$.
Then $\mathfrak{C}^2_{\mathfrak{gl}_n}=\mathrm{Spec}R(n)$.  Hence, the main theorem is an immediate consequence of the following:
 \begin{theorem}\label{R(n)}
     The ring $R(n)$ is Cohen–Macaulay and normal for $n\geq 1$.
 \end{theorem}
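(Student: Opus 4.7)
The Cohen-Macaulay and normal conditions together can be repackaged via Serre's criteria: since $R(n)$ has Krull dimension $n^2+n$ while the ambient polynomial ring has dimension $2n^2$, it suffices to prove (i) $R(n)$ is Cohen-Macaulay and (ii) $R(n)$ is regular in codimension one, after which normality and reducedness follow. The base case $n=1$ is immediate because $R(1)=\mathbb{K}[x,y]$. For $n\geq 2$ the ideal $I(n)$ is generated by $n^2-1$ linearly independent quadrics (after using $\operatorname{tr}[X_n,Y_n]\equiv 0$), while its codimension is only $n^2-n$, so $R(n)$ is not a complete intersection and standard Koszul or Eagon-Northcott techniques do not apply directly.

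For condition (ii) I would work with the projection $\pi:\mathfrak{C}^2_{\mathfrak{gl}_n}\to\mathfrak{gl}_n$, $(X,Y)\mapsto X$. On the open locus $U\subset\mathfrak{gl}_n$ of matrices admitting a cyclic vector, the fiber over $X$ is precisely the centralizer $\mathfrak{z}(X)=\mathbb{K}[X]\cong\mathbb{K}^n$, which varies smoothly with $X$, so $\pi^{-1}(U)$ is a smooth open subscheme of $\mathfrak{C}^2_{\mathfrak{gl}_n}$. A stratification of the complement by Jordan type, together with careful bookkeeping based on centralizer dimensions and the fact that $\dim\mathfrak{C}^2_{\mathfrak{gl}_n,red}=n^2+n$, should then show the singular locus has codimension at least two in any characteristic, extending Popov's characteristic-zero result.

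Condition (i), Cohen-Macaulayness, is the main mountain. A natural route is induction on $n$ via a carefully chosen flat family. Two candidate approaches suggest themselves. First, one can try to exhibit $R(n)$ as a flat deformation of a ring built from $R(n-1)$ and simpler pieces by slicing or by putting $X$ into a block-triangular normal form, then invoke the inductive hypothesis together with standard depth-on-the-total-space lemmas. Alternatively, one can look for a Gr\"obner or SAGBI degeneration of $R(n)$ whose initial ideal is the Stanley-Reisner ideal of a shellable simplicial complex (or otherwise visibly Cohen-Macaulay), so that upper semicontinuity of depth transfers the property back to $R(n)$. In either case, the essential ingredient is explicit control of the special fiber.

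The central obstacle, which I expect will consume most of the effort, is constructing the right auxiliary object. The projection $\pi$ is not flat since centralizer dimension jumps, so naive fiber-by-fiber arguments fail; and the generators of $I(n)$ are too specific to fit any off-the-shelf monomial or determinantal framework. I anticipate that the proof will introduce an enriched commuting scheme --- perhaps parametrizing commuting pairs together with additional flag or eigenline data, in the spirit of an isospectral Hilbert scheme --- whose Cohen-Macaulayness can be established directly (by an explicit resolution or a genuine complete-intersection description), and which maps to $\mathfrak{C}^2_{\mathfrak{gl}_n}$ with enough control (birationally, or via a faithfully flat map with small enough fibers) that the Cohen-Macaulay property descends to $R(n)$.
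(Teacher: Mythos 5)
Your high-level strategy aligns with the paper's proof to a striking degree: induction on $n$, Serre's criterion reducing normality to Cohen--Macaulayness plus regularity in codimension one, the observation that the naive projection $(X,Y)\mapsto X$ is not flat, and --- most presciently --- the idea that the right auxiliary object is an ``enriched commuting scheme'' carrying eigenvector/eigenline data in the spirit of isospectral constructions. That is exactly what the paper's ring $\Tilde{R}(n)$ does: it adjoins a column vector $(u_i)$, a row vector $(v_j)$, eigenvalues $t_1,t_2$, and a degeneration parameter $t$, with the relations $(X_n-t_1I)u=0$, $v(Y_n-t_2I)=0$, and $[X_n,Y_n]=tuv$. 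The slicing-plus-flatness mechanism you describe (``flat deformation\ldots then invoke the inductive hypothesis together with standard depth-on-the-total-space lemmas'') is also precisely what drives the induction, via the classical fact that $A/(t)$ Cohen--Macaulay and $t$ a nonzerodivisor imply $A$ Cohen--Macaulay. For the $R_1$ condition, your cyclic-vector/Jordan-type stratification is morally the same argument the paper makes more directly with the Neubauer--Saltman criterion (regularity at $(A,B)$ iff $\dim\{C: AC=CA, CB=BC\}=n$) and Basili's bound on the doubly derogatory locus.

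The gap is that the proposal never actually builds the object it correctly anticipates, and this is where essentially all of the difficulty lives. To make the induction close, one needs not just ``some'' enriched scheme but a specific tower of intermediate rings ($\Tilde{R}(n)$, $R_1(n)=R(n)/(x_{in})$, $R'(n)$, $R_2(n)$) linked by concrete isomorphisms such as $R(n)/(x_{in},y_{ni}\mid i<n)\cong\Tilde{R}(n-1)/(t-1)$, together with delicate equidimensionality statements for $\Tilde{R}(m)/(t,w_m)$ and $\Tilde{R}(m)/(t)$ proved by an explicit parametrization of components $\mathcal{CV}(m)^\circ$ and a Jacobian computation at well-chosen points. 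None of that is supplied. Moreover, the flatness off a point requires a genuine argument (the paper uses $GL_{n-1}$- or $GL_n$-equivariance of the slicing maps plus Milne's criterion, then a separate reducedness-of-the-special-fiber argument to extend flatness to the central point); the phrase ``with enough control (birationally, or via a faithfully flat map with small enough fibers)'' papers over exactly the part that takes several sections of work. Your alternative route via a Gr\"obner or SAGBI degeneration to a shellable Stanley--Reisner ideal is not pursued in the paper, and I see no evidence it would succeed: the commutator ideal $I(n)$ has no known initial ideal of this type for general $n$, and the paper's success comes precisely from avoiding any monomial/determinantal framework in favor of the eigenline-enriched deformation.
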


%We will explain the strategy of our proof of Theorem \ref{R(n)}.
We prove Theorem \ref{R(n)} by induction on $n$. The proof is clear for $n=1$. We assume that $R(n-1)$ is Cohen–Macaulay and normal and prove that $R(n)$ is Cohen–Macaulay and normal. To complete this induction step, we construct several intermediate rings. They are
\begin{itemize}
    \item $\Tilde{R}(n)=\mathbb{K}[x_{ij},y_{ij},u_i,v_i,t_1,t_2,t\mid 1\leq i,j\leq n]/J(n)$, where $J(n)$ is the ideal of $\Tilde{R}(n)$ generated by all of the entries of the matrices 
    \[
    [X_n,Y_n]-t\begin{pmatrix}
    u_1\\
    \vdots\\
    u_n
    \end{pmatrix}\left(v_1,\dots,v_n\right),
    (X_n-t_{1}I_n)\begin{pmatrix}
    u_1\\
    \vdots\\
    u_n
    \end{pmatrix},\left(v_1,\dots,v_n\right)(Y_n-t_{2}I_n),
    \]
    where $I_n$ is the identity matrix.
    \item $R_1(n)=R(n)/(x_{in}\mid 1\leq i\leq n-1)$.
    \item $R'(n)=\mathbb{K}[x_{ij},y_{ij},u_i,v_i,v_i',t_1,t_2,t_3\mid 1\leq i,j\leq n]/J'(n)$, where $J'(n)$ is the ideal generated by all of the entries of the matrices 
    \[
    [X_n,Y_n]-\begin{pmatrix}
    u_1\\
    \vdots\\
    u_n
    \end{pmatrix}\left(v_1,\dots,v_n\right),
    (X_n-t_{1}I_n)\begin{pmatrix}
    u_1\\
    \vdots\\
    u_n
    \end{pmatrix},\left(v_1,\dots,v_n\right)(X_n-t_{2}I_n),
    \]
    $\left(v_1,\dots,v_n\right)(Y_n-t_{3}I_n)$ and $\left(v_1',\dots,v_n'\right)(Y_n-t_{3}I_n)$.
    \item $R_2(n)=R'(n)/(\det(X_n-t_{2}I_n))$.
\end{itemize}
These rings have the relation
\begin{equation*}
R'(n)/(v_{1},\dots,v_{n})\simeq\Tilde{R}(n)[t'_{2}]/(t).
\end{equation*}
We prove that some of their local rings inherit Cohen-Macaulayness and reducedness from $R(n-1)$.

%Let $R_1(n)=R(n)/(x_{in}\mid 1\leq i\leq n-1)$. 
The intermediate ring $\Tilde{R}(n-1)$ is related to our target ring $R(n)$ by the formula
\[
\Tilde{R}(n-1)/(t-1)\cong R(n)/(x_{in},y_{ni}\mid 1\leq i\leq n-1),
\]
which we will prove in \Cref{lemma:connection_among_rings}. %hence we establish a connection between $\Tilde{R}(n-1)/(t-1)$ and $R(n)$.
Hence,
\[\begin{tikzcd}
	{R(n-1) \text{ is Cohen–Macaulay and reduced}  } \\
	{ \Tilde{R}(n-1)/(\sum_{i=1}^{n-1}u_{i}v_{i},t) \text{ is Cohen–Macaulay and reduced}} \\
	{\Tilde{R}(n-1)/(t-1) \text{ is Cohen–Macaulay and reduced}} \\
	{R_1(n) \text{ is Cohen–Macaulay and reduced}} \\
	{R(n) \text{ is Cohen–Macaulay and normal}}
	\arrow["\text{ Lemma } \ref{Tilde(R)/(vu,t) CM}\text{+ Lemma } \ref{lemma: dim(Tilde(R)/(vu,t))}", Rightarrow, from=1-1, to=2-1]
	\arrow["\text{ Lemma } \ref{Tilde(R)/(t-1)}", Rightarrow, from=2-1, to=3-1]
	\arrow["\text{ Lemma } \ref{R_1 CM}", Rightarrow, from=3-1, to=4-1]
	\arrow["\text{ Lemma } \ref{R(n) CM}", Rightarrow, from=4-1, to=5-1]
\end{tikzcd}\]

The proofs of Lemma \ref{Tilde(R)/(vu,t) CM}, Lemma \ref{lemma: dim(Tilde(R)/(vu,t))} and Lemma \ref{lemma: dim(Tilde(R)/(t))} are long and technical, so we put them in Section \ref{proof of lemma Tilde(R)/(vu,t) CM}, Section \ref{Proof of lemma: dim(Tilde(R)/(vu,t))} and Section \ref{Proof of lemma: dim(Tilde(R)/(t))}, respectively. 

%We will show that each irreducible component of $\mathrm{Spec}\Tilde{R}(m-1)/(w_{m-1},t)$ has dimension $m^2+m+2$ and contains a regular point in Section \ref{proof of lemma dim(Tilde(R)/(vu,t)) and lemma dim(Tilde(R)/(t))}. 
%To prove Lemma \ref{Tilde(R)/(vu,t) CM}, we need only to show that $\Tilde{R}(n-1)/(w_{n-1},t)$ is Cohen–Macaulay.\todo{?}

In order to prove Lemma \ref{Tilde(R)/(vu,t) CM}, we consider the localization of rings $R(n)$ and $R_2(n)$. Let $\mathfrak{m}'$ be the ideal of $R(n)$ generated by all of the entries of the matrices 
$X_{n}-\mathrm{diag}(0,\dots,0,1)$ and $Y_{n}$.
%Another one is
%\[
%R=\mathbb{K}[x_{ij},y_{ij},u_i,v_i,v_i',t_1,t_2,t_3\mid 1\leq i,j\leq n]/J+(\det(X_n-t_{2}I_n)),
%\]
%where $J$ is the ideal of $R$ generated by all of the entries of the matrices 
%\[
%[X_n,Y_n]-\begin{pmatrix}
%    u_1\\
%    \vdots\\
%    u_n
%\end{pmatrix}\left(v_1,\dots,v_n\right),
%(X_n-t_{1}I_n)\begin{pmatrix}
%    u_1\\
%    \vdots\\
%    u_n
%\end{pmatrix},\left(v_1,\dots,v_n\right)(X_n-t_{2}I_n),
%\]
%$\left(v_1,\dots,v_n\right)(Y_n-t_{3}I_n)$ and $\left(v_1',\dots,v_n'\right)(Y_n-t_{3}I_n)$.
%\XP{\sout{Let $\mathfrak{n}'$ be the ideal of $R'$ generated by all of the entries of the matrices $X_{n}-\mathrm{diag}(0,\dots,0,1),Y_{n}$ and $u_i,v_i,v_i',t_1,t_3,t_2-1$ for $1\leq i\leq n$. Let $R=R'/(\det(X_n-t_{2}I_n))$, and  $\mathfrak{n}=\mathfrak{n}'R$.}} 
Let $\mathfrak{q}$ be the ideal of $R_2(n)$ generated by all of the entries of the matrices 
$X_{n}-\mathrm{diag}(0,\dots,0,1),Y_{n}$ and $u_i,v_j,v_i',t_1,t_3,t_2-1$ for $1\leq i\leq n$ and $1\leq j\leq n-1$.
Let $\mathfrak{n}=\mathfrak{q}+(v_n)$ and $w_n=\sum_{i=1}^nu_iv_i$. 
We will show that 
 \[
R_2(n)_{\mathfrak{q}}/(v_i,x_{in},x_{nn}-1,v_n'\mid 1\leq i\leq  n-1)\cong\left(\Tilde{R}(n-1)/(w_{n-1},t)\right)_{\mathfrak{q}'}\otimes_{\mathbb{K}}\mathbb{K}(v_n),
 \]
 where
 \begin{equation*}
     \mathfrak{q}'=(x_{ij},y_{ij},u_i,v_i,t_1,t_2,\mid 1\leq i,j\leq n-1)\subseteq \Tilde{R}(n-1)/(w_{n-1},t),
 \end{equation*}
 hence we establish a connection between $R_2(n)_{\mathfrak{n}}$ and $\Tilde{R}(n-1)/(w_{n-1},t)$.
Then
\[\begin{tikzcd}
	{R(n-1) \text{ is Cohen–Macaulay and reduced}  } \\
	{ R(n)_{\mathfrak{m}'} \text{ is Cohen–Macaulay} } \\
{R_2(n)_{\mathfrak{n}}/(v_1,\dots,v_n) \text{ is Cohen–Macaulay and reduced}}\\
	{R_2(n)_{\mathfrak{n}} \text{ is Cohen–Macaulay}} \\
	{ \Tilde{R}(n-1)/(w_{n-1},t) \text{ is Cohen–Macaulay}} 
	\arrow["\text{ Lemma } \ref{R(n)m' CM}", Rightarrow, from=1-1, to=2-1]
    \arrow["\text{ Lemma } \ref{R_n/(v) CM}", Rightarrow, from=2-1, to=3-1]
	\arrow["\text{ Lemma } \ref{R_n CM}", Rightarrow, from=3-1, to=4-1]
	\arrow["\text{ Lemma } \ref{proof of Tilde(R)/(vu,t) CM}", Rightarrow, from=4-1, to=5-1]
\end{tikzcd}\]

In Section \ref{Proof of lemma: dim(Tilde(R)/(vu,t))}, we divide our proof of Lemma \ref{lemma: dim(Tilde(R)/(vu,t))} in three steps.

Consider $\cCV(m)$, the set of closed points of $\Spec\Tilde{R}(m)/(t,w_{m})$, for any $m\geq 1$. We first find a subset $\cCV(m)^{\circ}\subseteq \cCV(m)$, which has an explicit irreducible decomposition. We prove that each of the irreducible components has dimension $m^{2}+m+2$ in \Cref{dim(im(psi))}.

Next, in \Cref{CV^o dense}, we prove that $\cCV(m)^{\circ}$ is dense in $\cCV(m)$. The proof is based on a case-by-case analysis, which is the content of \Cref{closure case 1: two different eigenvalues}-\ref{CV^o dense}. Hence, $\cCV(m)$ is equidimensional of $m^{2}+m+2$ and so is $\Spec\Tilde{R}(m)/(t,w_{m})$.

Finally, in Lemma \ref{Tilde(R)/(vu,t) regular point}, we show that $\Tilde{R}(m)/(t,w_m)$ is regular at all generic points by Jacobian criterion. Putting together with the result that the scheme $\Spec \Tilde{R}(m)/(t,w_{m})$ is equidimensional of $m^{2}+m+2$, we obtain the proof of Lemma \ref{lemma: dim(Tilde(R)/(vu,t))}.

In Section \ref{Proof of lemma: dim(Tilde(R)/(t))}, we will divide the proof of Lemma \ref{lemma: dim(Tilde(R)/(t))} into several lemmas. 
First, we will prove that $\mathrm{Spec}\Tilde{R}(m)[t^{-1}]/(w_m)$ is dense in $\mathrm{Spec}\Tilde{R}(m)/(w_m)$ by proving that $\mathcal{CV}(m)^o$ is a subset of the Zariski closure of $\mathrm{Spec}\Tilde{R}(m)[t^{-1}]/(w_m)$. 
Next, we will prove that $\mathrm{Spec}\Tilde{R}(m)/(w_m)$ is equidimensional of dimension $m^2+m+3$ by obtaining the upper and lower bounds of the dimension of an irreducible component by comparing them with the dimension of $R(m+1)$ and that of $\Tilde{R}(m)/(w_m,t)$.
Then, we will prove that $\mathrm{Spec}\Tilde{R}(m)/(t-1)$ is equidimensional and is regular at all generic points by proving that so is $\mathrm{Spec}\Tilde{R}(m)/(w_m)$.
Finally, we prove that $\mathrm{Spec}\Tilde{R}(m)/(t)$ is equidimensional and is regular at all generic points, by verifying those properties for $\Tilde{R}(m)[w_m^{-1}]/(t)$ and $\Tilde{R}(m)/(w_m,t)$.

\subsection*{Acknowledgments}
The first author is supported by grants from Beijing Institute of Mathematical Sciences and Applications (BIMSA), the Beijing NSF BJNSF-IS24005, and the China National Science Foundation (NSFC) NSFC-RFIS program W2432008. He would like to thank China's National Program of Overseas High Level Talent for generous support. The third author is supported by grants from BIMSA and the Beijing NSF.

\section{Notations and preliminaries}
In this section we fix some notations and record the useful lemma that will be used frequently in the subsequent sections.

Throughout the paper, $\mathbb{K}$ is an algebraically closed field. All rings are commutative, unless otherwise specified.

We denote by $M_{n\times m}(\mathbb{K})$ the set of $n\times m$ matrices over $\mathbb{K}$, by $M_{n}(\mathbb{K})$ the set of $n\times n$ matrices over $\mathbb{K}$, and by $I_n$ the identity matrix, and for a matrix $M$, we denote by $M^t$ its transpose.
For $M_i\in M_{n_i}(\mathbb{K})$ and $1\leq i\leq r$, we denote by $\mathrm{diag}(M_1,\dots,M_r)$ the block matrix $(Q_{ij})_{1\leq i,j\leq r}\in M_{n_1+\dots+n_r}(\mathbb{K})$ where $Q_{ii}=M_i$ and $Q_{ij}=0$ for $1\leq i\neq j\leq r$.
%\XP{\sout{For $a_0,\dots,a_r\in R$, we denote by $\mathrm{diag}(a_0I_0,a_1\dots,a_r)$ the matrix $\mathrm{diag}(a_1\dots,a_r)\in M_r(\mathbb{K})$.}}\toXP{Why do we need this notation? In Section \ref{proof of lemma dim(Tilde(R)/(vu,t)) and lemma dim(Tilde(R)/(t))}}
For a variety or scheme $\mathcal{X}$ over $\mathbb{K}$ and a subset $\mathcal{Y}$, we denote by $\overline{\mathcal{Y}}$ the Zariski closure of $\mathcal{Y}$ in $\mathcal{X}$.
For a scheme $\mathcal{X}$ over $\mathbb{K}$, we denote by $\mathcal{O}_{\mathcal{X}}$ the structure sheaf, by $\mathcal{O}_{\mathcal{X},p}$ the stalk of $\mathcal{O}_{\mathcal{X}}$ at $p\in \mathcal{X}$, by $\mathcal{X}(\mathbb{K})$ the set of morphisms of $\mathbb{K}$-schemes from $\mathrm{Spec}\mathbb{K}$ to $\mathcal{X}$. 
We denote by $GL_n$ the general linear group over $\mathbb{K}$, and by $\mathbb{G}_m$ the multiplicative group over $\mathbb{K}$.

The following five results are well-known, and they will be used multiple times in our proof.
%\begin{lemma}([\ref{Matsumura}, Exercises 16.4])Let $f:A\to B$ be a flat ring homomorphism, $M$ an $A$-module, and$a_1,\dots,a_r\in A$ an $M$-sequence; if $(M/(a_1,\dots,a_r)M)\otimes B\neq 0$ then $f(a_1),\dots,f(a_r)$ is an $M\otimes B$-sequence.\end{lemma}

\begin{lemma}[Theorem 17.3 in \cite{Matsumura} and Prop. 3.4.6 in \cite{Grothendieck}]\label{lemma: A/t CM to A CM}
    For every noetherian local ring $A$ and for every nonzerodivisor $t$ in its maximal ideal, if $A/tA$ is Cohen–Macaulay, then $A$ is Cohen–Macaulay, if $A/tA$ is reduced, then $A$ is reduced.
\end{lemma}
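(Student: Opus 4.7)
The plan is to handle the two implications separately, each by reducing to standard local-algebra facts about a nonzerodivisor in the maximal ideal. For the Cohen--Macaulay assertion, I would combine two ingredients. First, Krull's Hauptidealsatz, together with the observation that a regular element $t$ avoids every minimal prime of $A$, yields the dimension identity $\dim A = \dim(A/tA) + 1$. Second, dividing by a regular element drops depth by exactly one, giving $\mathrm{depth}\, A = \mathrm{depth}(A/tA) + 1$; this is proved by chasing a maximal regular sequence on $A/tA$ and prepending $t$ to lift it to one on $A$. Under the Cohen--Macaulay hypothesis $\mathrm{depth}(A/tA) = \dim(A/tA)$, the two identities immediately combine to give $\mathrm{depth}\, A = \dim A$.

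For the reducedness assertion, I would argue by an iterated divisibility descent. If $x \in A$ satisfies $x^n = 0$, then its image in the reduced quotient $A/tA$ is zero, so $x = t y_1$ for some $y_1 \in A$. Substituting back, $t^n y_1^n = x^n = 0$, and because $t$ (and hence $t^n$) is a nonzerodivisor, $y_1^n = 0$. Iterating the same argument with $y_1$ in place of $x$ yields $x \in t^k A$ for every $k \geq 1$. Since $t$ lies in the maximal ideal $\mathfrak{m}_A$ of the Noetherian local ring $A$, we have $\bigcap_k t^k A \subseteq \bigcap_k \mathfrak{m}_A^k = 0$ by Krull's intersection theorem, forcing $x = 0$.

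The only genuine subtlety, and the step over which I would take care, is the depth-drop identity $\mathrm{depth}\, A = \mathrm{depth}(A/tA) + 1$; this is the precise statement that distinguishes the conclusion from a mere inequality, and it uses the full strength of $t$ being regular and in $\mathfrak{m}_A$. Beyond that, both halves are formal, and no ingredients are needed beyond those already available in \cite{Matsumura} and \cite{Grothendieck}; since the lemma is classical, I do not anticipate any conceptual obstacle in writing out a self-contained argument along these lines.
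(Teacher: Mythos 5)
Your proof is correct. The paper does not give its own argument for this lemma; it simply cites Matsumura (Theorem 17.3) for the Cohen--Macaulay half and EGA IV (Prop.\ 3.4.6) for the reducedness half, so there is no in-paper proof to compare against. Your Cohen--Macaulay argument (dimension drops by one because a nonzerodivisor avoids every minimal prime, depth drops by one by the maximal-regular-sequence bookkeeping, so the equality $\mathrm{depth}=\dim$ passes up from $A/tA$ to $A$) is exactly the textbook route and matches the Matsumura reference. Your reducedness argument --- divisibility descent to show a nilpotent lies in $\bigcap_k t^k A$, followed by Krull's intersection theorem using $t\in\mathfrak{m}_A$ --- is a clean, elementary, and self-contained proof; it is arguably more direct than the Serre-criterion style argument one would extract from the EGA citation, and it has the virtue of not needing any of the $(R_i)$/$(S_i)$ machinery. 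One small remark on completeness: your depth-drop step is stated as ``prepending $t$,'' which gives $\mathrm{depth}\,A\geq\mathrm{depth}(A/tA)+1$; to get the equality you need to observe (as is standard, and as you implicitly invoke) that the lifted sequence remains \emph{maximal}, since $A/(t,x_1,\dots,x_k)\cong (A/tA)/(\bar x_1,\dots,\bar x_k)$ has depth zero --- or alternatively appeal to the rearrangement of regular sequences in Noetherian local rings. This is a minor point of exposition, not a gap.
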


\begin{lemma}[Corollary 2.2.15 in \cite{Bruns-Herzog}]\label{lemma: graded CM}
Let $R=\mathbb{K}[x_1,\dots,x_n]$, $\mathfrak{m}=(x_1,\dots,x_n)$, and $M$ a finite graded $R$-module. Then $M$ is Cohen–Macaulay if and only if $M_{\mathfrak{m}}$ is Cohen–Macaulay. 
\end{lemma}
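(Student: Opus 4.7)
The plan is to reduce the biconditional to two localization-invariance statements for a finitely generated graded $R$-module $M$: namely, $\dim M = \dim M_{\mathfrak{m}}$ and $\mathrm{depth}_{\mathfrak{m}} M = \mathrm{depth}\, M_{\mathfrak{m}}$. Once both equalities are in hand, the Cohen--Macaulay condition $\dim = \mathrm{depth}$ transfers in both directions and the lemma follows immediately.

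For the dimension equality, the annihilator $\mathrm{Ann}_R(M)$ is a homogeneous ideal, so the quotient $A := R/\mathrm{Ann}_R(M)$ is a positively graded $\mathbb{K}$-algebra whose unique graded maximal ideal $\overline{\mathfrak{m}}$ is the image of $\mathfrak{m}$. Every minimal prime of $A$ is homogeneous, hence contained in $\overline{\mathfrak{m}}$, so $\overline{\mathfrak{m}}$ lies on every irreducible component of $\Spec A$. A maximal chain of primes in $A$ can therefore be arranged to terminate at $\overline{\mathfrak{m}}$, giving $\dim A = \dim A_{\overline{\mathfrak{m}}}$, and hence $\dim M = \dim M_{\mathfrak{m}}$.

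The depth equality is the more delicate step. The key inputs are (i) that associated primes of a graded module are homogeneous, so every $\mathfrak{p} \in \mathrm{Ass}(M)$ is contained in $\mathfrak{m}$, and (ii) graded prime avoidance: if a homogeneous ideal is not contained in any of finitely many homogeneous primes, then it contains a homogeneous element outside their union. If $\mathrm{depth}\, M_{\mathfrak{m}} > 0$, then $\mathfrak{m} R_{\mathfrak{m}}$ is not an associated prime of $M_{\mathfrak{m}}$, which by (i) means $\mathfrak{m} \notin \mathrm{Ass}(M)$; each $\mathfrak{p} \in \mathrm{Ass}(M)$ is then a proper homogeneous subprime of $\mathfrak{m}$, and (ii) produces a homogeneous $a \in \mathfrak{m}$ avoiding every associated prime. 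Such $a$ is a nonzerodivisor on both $M$ and $M_{\mathfrak{m}}$, and $M/aM$ remains a finitely generated graded $R$-module with $\mathrm{depth}\,(M/aM)_{\mathfrak{m}} = \mathrm{depth}\, M_{\mathfrak{m}} - 1$. Inducting on $\mathrm{depth}\, M_{\mathfrak{m}}$ constructs a homogeneous regular sequence in $\mathfrak{m}$ of length $\mathrm{depth}\, M_{\mathfrak{m}}$, proving $\mathrm{depth}_{\mathfrak{m}} M \geq \mathrm{depth}\, M_{\mathfrak{m}}$; the reverse inequality is immediate from exactness of localization.

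The main obstacle is simply marshalling these graded-theoretic inputs correctly---in particular, being careful that the associated primes of $M_{\mathfrak{m}}$ as an $R_{\mathfrak{m}}$-module pull back precisely to those $\mathfrak{p} \in \mathrm{Ass}(M)$ with $\mathfrak{p} \subseteq \mathfrak{m}$, which in our graded setting is all of them. Combining the dimension and depth equalities yields the equivalence of graded Cohen--Macaulayness of $M$ with Cohen--Macaulayness of the local ring $M_{\mathfrak{m}}$.
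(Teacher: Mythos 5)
The paper does not actually prove this lemma---it is cited verbatim as Corollary 2.2.15 of Bruns--Herzog. So the relevant question is simply whether your blind argument is a correct proof, and I think it falls just short of the statement that the paper uses.

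Your two comparison facts are both correct and correctly argued. For $\dim M = \dim M_{\mathfrak m}$, passing to $A = R/\operatorname{Ann}_R(M)$ and using that minimal primes of a graded ring are homogeneous (hence contained in $\overline{\mathfrak m}$), together with the fact that a finitely generated domain over a field has the same dimension as its localization at any maximal ideal, does give $\dim A = \dim A_{\overline{\mathfrak m}}$. For $\operatorname{grade}(\mathfrak m, M) = \operatorname{depth} M_{\mathfrak m}$, the induction via homogeneity of $\operatorname{Ass}(M)$ and graded prime avoidance is sound, and the reverse inequality from exactness of localization is immediate. But what these two facts combine to give is precisely what you say at the end: the equivalence of the \emph{graded} condition $\operatorname{grade}(\mathfrak m, M) = \dim M$ with Cohen--Macaulayness of the single local ring $M_{\mathfrak m}$.

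The lemma, however, asserts that $M$ \emph{is Cohen--Macaulay}, which for a module over a non-local Noetherian ring means that $M_{\mathfrak p}$ is Cohen--Macaulay for every prime (equivalently, every maximal ideal) in $\operatorname{Supp} M$. This stronger, unqualified conclusion is what the paper actually needs: normality of $R(n)$ is deduced from Serre's criterion, and the condition $(S_2)$ there is a depth bound at \emph{every} prime, not only at the irrelevant ideal. Your argument never touches maximal ideals $\mathfrak n \neq \mathfrak m$. The missing step is the comparison between a prime $\mathfrak p$ and the homogeneous prime $\mathfrak p^*$ generated by the homogeneous elements of $\mathfrak p$: for a finite graded module $M$ one has $\operatorname{depth} M_{\mathfrak p} = \operatorname{depth} M_{\mathfrak p^*} + \dim R_{\mathfrak p}/\mathfrak p^* R_{\mathfrak p}$ and the analogous identity for dimension, so $M_{\mathfrak p}$ is Cohen--Macaulay if and only if $M_{\mathfrak p^*}$ is. Since $\mathfrak p^*$ is homogeneous and hence contained in $\mathfrak m$, $M_{\mathfrak p^*}$ is a localization of $M_{\mathfrak m}$ and so inherits Cohen--Macaulayness. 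Adding this $\mathfrak p$-versus-$\mathfrak p^*$ argument (which is essentially BH's route) would close the gap and upgrade your conclusion from graded Cohen--Macaulayness to Cohen--Macaulayness of $M$.
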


\begin{lemma}[Theorem 13.7 in \cite{Matsumura}]\label{lemma: prime divisor homogeneous}
Let $A=\bigoplus_{i\geq 0}A_i$ be a Noetherian graded ring. If $I$ is a homogeneous ideal and $P$ is a prime divisor of $I$ then $P$ is also homogeneous.    
\end{lemma}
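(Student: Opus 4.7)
The plan is to show that every $f \in P$ has all of its homogeneous components in $P$, which is equivalent to $P$ being a homogeneous ideal. Since $P$ is a prime divisor of $I$, one writes $P = (I:x)$ for some $x \in A$ with $x \notin I$. I would normalize $x$ by discarding any homogeneous component that lies in $I$ (this does not alter $(I:x)$ since $I \subseteq P$), and then, among all such representatives, pick one whose number $r$ of nonzero homogeneous components is minimal.

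The heart of the proof is a minimality argument: for every $f \in P$, I claim its top-degree homogeneous component $f_{\mathrm{top}}$ also lies in $P$. To see this, set $y = f_{\mathrm{top}}\,x$. Then $(I:x) \subseteq (I:y)$ is immediate, since $gx \in I$ implies $g f_{\mathrm{top}} x \in I$. Assume for contradiction that $f_{\mathrm{top}} \notin P$; for any $g \in (I:y)$ one has $g f_{\mathrm{top}} \in (I:x) = P$, and primality of $P$ forces $g \in P$, hence $(I:y) = P$. But writing $x = x_{d_1} + \cdots + x_{d_r}$ with $d_1 < \cdots < d_r$, the relation $fx \in I$ combined with the homogeneity of $I$ forces the unique top-degree term $f_{\mathrm{top}} x_{d_r}$ of $fx$ to lie in $I$. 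Hence, modulo $I$, the element $y$ has at most $r - 1$ nonzero homogeneous components, and because $(I:y) = P \neq A$ it is not itself in $I$. This contradicts the minimality of $r$, so $f_{\mathrm{top}} \in P$ after all. Iterating the same argument on $f - f_{\mathrm{top}} \in P$, which has one fewer homogeneous component, shows that every homogeneous component of $f$ lies in $P$, and therefore $P$ is homogeneous.

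The only real subtlety is setting up the minimality correctly: one must work modulo $I$ so that ``number of nonzero homogeneous components'' is a well-defined quantity that strictly decreases under the operation $x \mapsto f_{\mathrm{top}}\,x$, and so that nonvanishing of $y$ modulo $I$ follows from $P$ being a proper ideal. Once that bookkeeping is in place, the primality step and the extraction of the top-degree homogeneous piece of $fx$ are both routine consequences of $I$ being a graded ideal in a graded ring.
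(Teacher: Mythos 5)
The paper does not prove this lemma at all; it simply cites Theorem~13.7 of Matsumura's \emph{Commutative ring theory}, so there is no ``paper's own proof'' to compare against. Your argument is correct and is essentially the standard textbook proof: reduce to $P=(I:x)$, normalize $x$ modulo $I$, pick a representative with the minimal number $r$ of nonzero homogeneous components, and then use primality together with the homogeneity of $I$ to show that multiplying by the top component $f_{\mathrm{top}}$ of any $f\in P$ either stays inside $P$ (the desired conclusion) or produces a representative with fewer than $r$ components, a contradiction. The degree bookkeeping is handled correctly: the degree $(\deg f_{\mathrm{top}}+d_r)$ piece of $fx$ is exactly $f_{\mathrm{top}}x_{d_r}$, hence lies in $I$, and $y=f_{\mathrm{top}}x\notin I$ because $(I:y)=P$ is proper. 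One tiny inaccuracy worth flagging: the parenthetical justification ``this does not alter $(I:x)$ since $I\subseteq P$'' is not the real reason; what makes the normalization harmless is that the discarded part $x''$ of $x$ lies in $I$, so $gx\in I\iff gx'\in I$ for every $g$, and $x'\neq 0$ (indeed $x'\notin I$) because $x\notin I$. This is a phrasing issue only and does not affect the correctness of the proof.
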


\begin{lemma}[Proposition 1.65 in \cite{Milne}]\label{lemma:G_equivariant_implies_flat}
    Let $G$ be a group functor over $\mathbb{K}$. Let $X$ and $Y$ be nonempty algebraic schemes over $\mathbb{K}$ on which $G$ acts, and let $f:X\to Y$ be an equivariant map. If $Y$ is reduced and $G(\mathbb{K})$ acts transitively on $Y(\mathbb{K})$, then $f$ is faithfully flat.
\end{lemma}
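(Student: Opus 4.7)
The plan is to combine the generic flatness theorem with the transitivity of the $G(\mathbb{K})$-action on $Y(\mathbb{K})$: first show that the flat locus of $f$ contains a non-empty open of $Y$, then spread flatness across all of $X$ by translating with group elements, and finally deduce surjectivity from the same transitivity.

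For flatness, I would begin by applying the generic flatness theorem to the finite-type morphism $f\colon X\to Y$ with $Y$ reduced; this produces a non-empty open $U\subseteq Y$ over which $f$ is flat. Since $Y$ is of finite type over the algebraically closed field $\mathbb{K}$, closed points are dense in $U$, so pick $y_0\in U(\mathbb{K})$. For any $g\in G(\mathbb{K})$, the actions of $g$ and $g^{-1}$ yield mutually inverse $\mathbb{K}$-scheme automorphisms $\tau_g\colon X\xrightarrow{\sim} X$ and $\sigma_g\colon Y\xrightarrow{\sim} Y$ with $f\circ\tau_g=\sigma_g\circ f$; in particular, flatness of $f$ at a point $x$ is equivalent to flatness of $f$ at $\tau_g(x)$. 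Given any $y\in Y(\mathbb{K})$, transitivity produces $g\in G(\mathbb{K})$ with $g\cdot y_0=y$, and any $x\in f^{-1}(y)$ satisfies $\tau_{g^{-1}}(x)\in f^{-1}(U)$, so $f$ is flat at $x$. Therefore the flat locus of $f$ is an open subset of $X$ containing every closed point of $X$, and hence equals $X$.

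For surjectivity, fix $x_0\in X(\mathbb{K})$, which exists because $X$ is non-empty and $\mathbb{K}$ is algebraically closed, and set $y_0:=f(x_0)$. The transitivity argument above immediately gives $Y(\mathbb{K})\subseteq f(X(\mathbb{K}))$, and by Chevalley's theorem the image $f(X)$ is a constructible subset of $Y$. A constructible subset of an algebraic $\mathbb{K}$-scheme containing every closed point is the whole scheme, so $f$ is surjective. Combined with the flatness obtained above, $f$ is faithfully flat.

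The one delicate point is the transport of flatness under the automorphisms $\tau_g$ and $\sigma_g$: one must check that the induced isomorphism of local rings really identifies $\mathcal{O}_{X,x}$, as an $\mathcal{O}_{Y,f(x)}$-algebra, with $\mathcal{O}_{X,\tau_g(x)}$ as an $\mathcal{O}_{Y,\sigma_g(f(x))}$-algebra, so that flatness is genuinely preserved. This uses that $g\in G(\mathbb{K})$, viewed via the Yoneda-type correspondence between natural self-transformations of the functor of points and $\mathbb{K}$-scheme automorphisms, yields compatible automorphisms intertwined by $f$; once this is in place, the remainder of the argument is the standard combination of generic flatness, openness of the flat locus, and density of $\mathbb{K}$-points in algebraic $\mathbb{K}$-schemes.
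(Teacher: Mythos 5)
The paper does not reproduce a proof of this lemma; it simply cites Proposition~1.65 in Milne's book, and your reconstruction is essentially Milne's argument: generic flatness over the reduced Noetherian base gives a nonempty open flat locus, translation by elements of $G(\mathbb{K})$ then covers every closed point of $X$, and openness of the flat locus together with the Jacobson property of finite-type $\mathbb{K}$-schemes forces flatness everywhere. Your surjectivity step via Chevalley's theorem is a minor stylistic variant of the standard one (since a flat morphism of finite type is open, $f(X)$ is an open subset of $Y$ containing all closed points and hence equals $Y$), but both routes are correct, so the proposal is sound and matches the cited reference's approach.
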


\begin{lemma}[Serre's criterion]\label{lemma:Serres_criterion}
Let $T$ be a Noetherian ring. Consider the following conditions ($R_{i}$) and ($S_{i}$) on $T$:
\begin{itemize}
    \item [($R_{i}$)] $T_{\mathfrak{p}}$ is regular for all $\mathfrak{p}\in\mathrm{Spec}(T)$ with $\dim T_{\mathfrak{p}}\leq i$;
    \item [($S_{i}$)] $\mathrm{depth} T_{\mathfrak{p}}\geq \min\{i,\dim T_{\mathfrak{p}}\}$ for all $\mathfrak{p}\in\mathrm{Spec}(T)$.
\end{itemize}
We have that
\begin{enumerate}
    \item $T$ is reduced if and only if ($R_{0}$) and ($S_{1}$) hold, and
    \item $T$ is normal if and only if ($R_{1}$) and ($S_{2}$) hold.
\end{enumerate}
\end{lemma}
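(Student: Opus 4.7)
The plan is to prove both parts by a localization-and-associated-primes analysis, exploiting that reducedness, normality, and each $(R_{i})$, $(S_{i})$ are all local conditions on $\mathrm{Spec}(T)$. The conceptual backbone in both cases is: control $\mathrm{Ass}(T)$ via a depth condition, and control integrality via a regularity condition on low-dimensional localizations.

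For part (1), in the forward direction I would argue that if $T$ is reduced, then so is every $T_{\mathfrak{p}}$; a reduced zero-dimensional Noetherian local ring is a field (its unique prime is the nilradical, which vanishes), giving $(R_{0})$. Using the classical fact that in a reduced Noetherian ring every associated prime of $(0)$ is a minimal prime, any prime of positive height contains a nonzerodivisor, yielding $(S_{1})$. For the converse, $(S_{1})$ forces $\mathrm{Ass}(T) \subseteq \mathrm{Min}(T)$ (since any associated prime has depth $0$ on the localization), and $(R_{0})$ makes each $T_{\mathfrak{p}}$ at $\mathfrak{p} \in \mathrm{Min}(T)$ a field. The canonical injection $T \hookrightarrow \prod_{\mathfrak{p}\in \mathrm{Ass}(T)} T_{\mathfrak{p}}$ (whose kernel is killed by an element avoiding every associated prime, hence is $(0)$) then embeds $T$ in a product of fields, so $T$ is reduced.

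For part (2), note that $(R_{1})+(S_{2})$ implies $(R_{0})+(S_{1})$, so by (1) I may assume $T$ is reduced on both sides of the equivalence. In the forward direction, localization preserves normality; a normal Noetherian local domain of dimension $\leq 1$ is a field or (via Krull--Akizuki) a DVR, hence regular, giving $(R_{1})$. For $(S_{2})$, I would invoke the intersection representation $A = \bigcap_{\mathrm{ht}(\mathfrak{p})=1} A_{\mathfrak{p}}$ valid for a Noetherian normal domain $A$: if some $T_{\mathfrak{p}}$ of dimension $\geq 2$ had depth $\leq 1$, then for a nonzerodivisor $b \in \mathfrak{p}T_{\mathfrak{p}}$ the quotient $T_{\mathfrak{p}}/bT_{\mathfrak{p}}$ would have an embedded associated prime, contradicting the intersection formula applied to $T_{\mathfrak{p}}$. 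For the reverse direction, a reduced Noetherian ring decomposes as a finite product of domains, and both $(R_{1})$ and $(S_{2})$ pass to each factor, so I reduce to the case that $T$ is a Noetherian domain satisfying $(R_{1})$ and $(S_{2})$. Then I would verify $T = \bigcap_{\mathrm{ht}(\mathfrak{p})=1} T_{\mathfrak{p}}$ inside $\mathrm{Frac}(T)$: for $x = a/b$ in the intersection, the ideal $(bT :_{T} a)$ has no height-$\geq 2$ associated prime by $(S_{2})$ applied to $T/bT$ (depth-$1$ condition on a principal quotient), hence it is not contained in any prime and equals $T$, so $a \in bT$. Combined with the fact from $(R_{1})$ that each $T_{\mathfrak{p}}$ of height $1$ is a DVR, hence integrally closed, a standard argument (integral elements of $\mathrm{Frac}(T)$ lie in each height-$1$ localization and so in the intersection) shows $T$ is integrally closed, i.e., normal.

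The main obstacle is the equivalence in part (2), specifically the interplay between the Krull-type intersection $T = \bigcap_{\mathrm{ht}(\mathfrak{p})=1} T_{\mathfrak{p}}$ and the depth condition $(S_{2})$: one must carefully track associated primes of $T/bT$ for a nonzerodivisor $b$, since it is exactly the absence of embedded components (guaranteed by $(S_{2})$) that prevents spurious denominators from appearing, while $(R_{1})$ is what guarantees each height-$1$ localization is integrally closed in the first place. The forward direction $(R_{1})+(S_{2})$ from normality is the more delicate half because one has to extract a depth statement from the abstract notion of integral closure, whereas the converse is a direct descent from the local DVR condition.
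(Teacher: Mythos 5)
The paper does not actually prove this lemma; it cites Matsumura, Theorem 23.8, so you are reconstructing a standard textbook proof. Your part (1) is correct, and your forward direction of part (2) is essentially sound (although the appeal to Krull--Akizuki is a red herring: the fact that a one-dimensional Noetherian local normal domain is a DVR is proved directly, e.g.\ by a Cayley--Hamilton argument on a conductor-type element, and does not need Krull--Akizuki).

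There is, however, a genuine gap in the reverse direction of part (2). You assert that \emph{a reduced Noetherian ring decomposes as a finite product of domains}. This is false: $k[x,y]/(xy)$ is reduced and Noetherian but is a connected ring with two minimal primes $(x)$ and $(y)$ satisfying $(x)+(y)\neq(1)$, so it admits no such product decomposition. Reducedness only gives you an injection $T\hookrightarrow\prod_i T/\mathfrak{p}_i$ into a product of domains, not an isomorphism. The decomposition you want is a \emph{consequence} of $(R_1)+(S_2)$, not something you can use as an intermediate step to establish normality. The standard repair, which is essentially Matsumura's route, reverses your order of argument: first show that $(R_1)+(S_2)$ forces $T$ to be integrally closed in its total ring of fractions $Q(T)$, working with $T=\bigcap_{\operatorname{ht}\mathfrak{p}=1}T_{\mathfrak{p}}$ inside $Q(T)$ rather than inside a fraction field (your associated-primes analysis of $(bT:_{T}a)$ carries over verbatim to nonzerodivisors $b$, since by part (1) the total ring of fractions is $\prod_{\mathfrak{p}\in\operatorname{Min}(T)}T_{\mathfrak{p}}$, a finite product of fields). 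Only then do you observe that the idempotents of $Q(T)$ corresponding to its factors are integral over $T$, hence lie in $T$, which produces the product decomposition $T\cong\prod_i T/\mathfrak{p}_i$ a posteriori. As written, your proof is circular at exactly the point where that decomposition is invoked.
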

\begin{proof}
    See Theorem 23.8 in \cite{Matsumura}.
\end{proof}
\begin{remark}
    By definition, a Noetherian ring $T$ is Cohen-Macaulay if and only if it satisfies ($S_{i}$) for all $i\geq 0$.
\end{remark}
We will need the following lemma as a criterion for flatness.

\begin{lemma}\label{lemma of flat}
Let $X$ and $Y$ be $\mathbb{K}$-schemes of finite type, and assume that $X$ is equidimensional and $Y$ is normal. Let $f:X \to Y$ be a dominant morphism of finite type. Let $y$ be a closed point of $Y$ and $f|_{f^{-1}(U)}:f^{-1}(U)\to U$ be flat where $U=Y\setminus \{y\}$. Assume that the fiber $X_y=f^{-1}(y)$ is reduced and $\dim f^{-1}(y)=\dim (X)-\dim(Y)<\dim(X)$. Then $f$ is flat.

\end{lemma}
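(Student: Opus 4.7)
The plan is to verify $f$ is flat by reducing to a pointwise check at each point of the problematic fiber $X_y$, and then applying a local flatness criterion that leverages the reducedness of the fiber together with the flatness already known on the complement.

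First, since $f$ is of finite type between Noetherian schemes, the flat locus of $f$ is open in $X$. By hypothesis this open locus contains $f^{-1}(U)$, so the non-flat locus is closed in $X$ and contained in $X_y$. It therefore suffices to show $f$ is flat at every $x \in X_y$. Pick such an $x$, which we may take to be a closed point of $X$, and set $R := \mathcal{O}_{Y,y}$ and $S := \mathcal{O}_{X,x}$. The hypotheses translate as: $R$ is a normal Noetherian local domain (after passing to the connected component of $Y$ containing $y$); $R \to S$ is a local homomorphism; the fiber $S/\mathfrak{m}_R S$ is reduced, being a localization of $\mathcal{O}(X_y)$; by equidimensionality of $X$ together with the fiber dimension assumption, $\dim S = \dim R + \dim(S/\mathfrak{m}_R S)$; and $S_{\mathfrak{p}}$ is flat over $R_{\mathfrak{p} \cap R}$ for every prime $\mathfrak{p}$ of $S$ with $\mathfrak{p} \cap R \ne \mathfrak{m}_R$.

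Next, I would argue by induction on $d := \dim R$. The case $d = 0$ is trivial. For $d \ge 1$, the crux is to find $t \in \mathfrak{m}_R$ that is a nonzerodivisor on both $R$ and $S$; the local criterion of flatness (Matsumura, Theorem 22.3) then reduces the problem to flatness of $S/tS$ over $R/(t)$. Since $R$ is a domain, any nonzero $t \in \mathfrak{m}_R$ serves for $R$. For $S$, the key subclaim is that no associated prime $\mathfrak{q}$ of $S$ contracts to $\mathfrak{m}_R$. If $\mathfrak{q}$ is a minimal prime of $S$ with $\mathfrak{q} \supseteq \mathfrak{m}_R$, then equidimensionality of $X$ forces $\dim(S/\mathfrak{q}) = \dim X$, while $\mathfrak{m}_R \subseteq \mathfrak{q}$ makes $S/\mathfrak{q}$ a quotient of the fiber ring $S/\mathfrak{m}_R S$ of dimension $\dim X - \dim Y$, contradicting $\dim Y \ge 1$. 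If $\mathfrak{q}$ were an embedded associated prime above $\mathfrak{m}_R$, reducedness of the fiber (no embedded associated primes) combined with the flatness on $U$ at a strictly smaller prime $\mathfrak{q}' \subsetneq \mathfrak{q}$ yields a depth-theoretic contradiction via the Auslander--Buchsbaum formula for flat local maps. Prime avoidance then provides a suitable $t$.

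The main obstacle is closing the induction, since normality of $R$ is in general not preserved upon quotienting by a single element. To handle this, one must either replace normality in the inductive hypothesis by a weaker property that is preserved under generic hyperplane sections (and still forces the associated-prime subclaim above), or bypass induction entirely by iterating the construction of $t$ to exhibit a full system of parameters $t_1, \dots, t_d$ of $R$ whose image in $S$ is a regular sequence, yielding flatness via the appropriate version of the local flatness criterion. Either route is somewhat delicate, but the geometric content is simple: reducedness of the fiber forbids the embedded structure on $X$ that would obstruct flatness at $y$.
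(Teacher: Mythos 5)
Your proposal identifies a genuine obstacle and does not resolve it, so it is not a complete proof. The paper's actual argument takes a substantially different route: it invokes the valuative criterion of flatness (Görtz--Wedhorn, Theorem 14.32), reducing the question to flatness of the base change $X \times_Y \operatorname{Spec} R \to \operatorname{Spec} R$ for every DVR $R$ mapping to $Y$. When the closed point of $\operatorname{Spec} R$ hits $y$, the paper shows via Chevalley's semicontinuity of fiber dimension (Theorem 14.110) and openness of flat-type morphisms (Theorem 14.129) that every irreducible component of $X_R$ dominates $\operatorname{Spec} R$; combined with geometric reducedness of the special fiber, flatness over the DVR follows from a clean torsion-freeness criterion (Proposition 14.16). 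This avoids all local-algebra induction.

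Your local approach has two gaps that I do not see how to close along the lines you sketch. First, as you yourself note, normality of $\mathcal{O}_{Y,y}$ is not preserved under quotient by a parameter, so the induction on $\dim R$ does not close; you would need to isolate a weaker stable hypothesis that still rules out associated primes of $S$ lying over $\mathfrak{m}_R$, and you do not supply one. Second, the proposed bypass via a full system of parameters $t_1,\dots,t_d$ is worse than you acknowledge: a normal Noetherian local ring need not be Cohen--Macaulay, so $R$ need not admit a system of parameters that is a regular sequence on $R$ at all, and even when it does, having $\underline{t}$ regular on both $R$ and $S$ does not by itself reduce flatness of $S$ over $R$ to flatness of $S/\underline{t}S$ over the Artinian ring $R/\underline{t}R$ without already knowing the intermediate flatness statements you are trying to prove. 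The embedded-prime subclaim via Auslander--Buchsbaum is also asserted rather than established. The underlying geometric intuition --- reducedness of $X_y$ plus equidimensionality plus flatness off $y$ should force flatness --- is correct and is exactly what the paper proves, but the valuative criterion is what lets one turn that intuition into a proof without wrestling with non-CM normal local rings.
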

\begin{proof}
Note that $Y$ is reduced and Noetherian, and $f$ is of finite type. By Görtz-Wedhorn \cite[Theorem 14.32]{Gortz},  $f$ is flat if and only if for every discrete valuation ring $R$ and every morphism $\mathrm{Spec} R \to Y$ the pull-back morphism $X\times_Y \mathrm{Spec} R \to \mathrm{Spec} R$ is flat.

Let $R$ be a discrete valuation ring,  $r:\mathrm{Spec} R \to Y$ and $f':X_R=X\times_Y \mathrm{Spec} R \to \mathrm{Spec} R$ . If $\mathrm{im}(r)\subseteq U$, then $r$ factors through $U$, then $f'$ is flat since $f|_{f^{-1}(U)}:f^{-1}(U)\to U$ is flat. 

If $y\in \mathrm{im}(r)$, then $r(s)=y$ where $s$ is the special point of $\mathrm{Spec} R $. 
Let $\eta$ be the generic point of $Y$. 
Let $Z$ be an irreducible component of $X_R$ and $Z'$ be the irreducible component of $X$ such that $Z'$ contains the image of $Z$. 
Let $f'':Z'\to Y$ be  the compositions of the closed immersion $h:Z'\to X$ with $f$.
We have the following commutative diagram.
\[\begin{tikzcd}
Z'\times_X Z \arrow[d, hook] \arrow[r, hook]          & Z \arrow[d, hook] \arrow[rd]  &                               \\
Z'_R=Z'\times_X X_R \arrow[d] \arrow[r, "h'", hook]   & X_R \arrow[r, "f'"] \arrow[d] & \mathrm{Spec}R \arrow[d, "r"] \\
Z' \arrow[r, "h", hook] \arrow[rr, "f''", bend right] & X \arrow[r, "f"]              & Y                             \\
{}                                                    &                               &                              
\end{tikzcd}\]
Since $X$ is equidimensional and  $\dim f^{-1}(y)<\dim(X)$, we have
$\dim(Z')=\dim(X)>\dim f^{-1}(y)$, then  $Z'\cap f^{-1}(U)\neq \emptyset$ and $f''$ is  dominant.  
Since  $f|_{f^{-1}(U)}$ is flat, we have $\dim f^{-1}(p) =\dim(X)-\dim(Y)=\dim f^{-1}(\eta)=\dim f''^{-1}(\eta)$ for any  point $p \in f(U)$.
By Görtz-Wedhorn \cite[Theorem 14.110]{Gortz}, the subset 
\[
\mathcal{A}=\{z \in Z'\mid \dim_z f''^{-1}(f''(z)) = \dim f''^{-1}(\eta)\}
\]
is locally closed in $Z'$, 
where 
%the   dimension of $f^{-1}(f(x))$ in $x$ is 
$\dim_z f''^{-1}(f''(z)) =\inf\dim U'$ and $U'\subseteq f''^{-1}(f''(z))$ runs through all open neighborhoods of $z$. 
For any closed point $z \in Z'$, $z$ is a closed point of $f''^{-1}(f''(z))$, then $\dim_z f''^{-1}(f''(z))=\dim(\mathcal{O}_{f''^{-1}(f''(z)),z})$ by Görtz-Wedhorn \cite[Lemma 14.94]{Gortz}, then 
\[
\dim_z f''^{-1}(f''(z)) \geq \dim(\mathcal{O}_{Z',z})-\dim(\mathcal{O}_{Y,f''(z)})=\dim f''^{-1}(\eta).
\]
On the other hand, $\dim_z f''^{-1}(f''(z)) \leq \dim f''^{-1}(\eta)$
since $f''^{-1}(f''(z))\subseteq f^{-1}(f''(z))$ and $\dim f^{-1}(f''(z))=\dim f^{-1}(\eta)=\dim f''^{-1}(\eta)$. So $z\in \mathcal{A}$ for any closed point $z \in Z'$. 
Then $\mathcal{A}=Z'$.
By Görtz-Wedhorn [\ref{Gortz}, Theorem 14.129], $f''$ is universally open, then $f'\circ h'$ is open.
Take an open subset $D$ of $X_R$ such that $D\subseteq Z$. Then $f'(D)$ is open in $\mathrm{Spec}R$ since $f'\circ h'$ is open and $h'^{-1}(D)$ is open in $Z'_R$, then the generic point $\eta'$ of $\mathrm{Spec}R$ is in $f'(D)\subseteq f'(Z)$. So the special fiber $f'^{-1}(s)$ is a subset of the closure of $f'^{-1}(\eta')$.
Since the residue field of $Y$ at $y$ is algebraically closed and $X_y$ is reduced, the fiber $X_y$ is geometrically reduced, hence $f'^{-1}(s)$  is  reduced. By Görtz-Wedhorn \cite[Proposition 14.16]{Gortz}, $f'$ is flat.
\end{proof}
The following lemmas establish connections between the ring $R(n)$ and those intermediate rings that we defined in Section \ref{sec:main_result_and_strategy}.

\begin{lemma}\label{lemma:connection_among_rings}
    %Let $\Tilde{R}(n)$ be a ring as defined in equation \eqref{eq:def_tilde_Rn}. 
    We have an isomorphism
    \begin{equation*}
        R(n)/(x_{in},y_{ni}\mid 1\leq i\leq n-1)\cong\Tilde{R}(n-1)/(t-1),
    \end{equation*}
    and hence
    \begin{equation*}
        R_1(n)/(y_{ni}\mid 1\leq i\leq n-1)\cong \Tilde{R}(n-1)/(t-1).
    \end{equation*}
\end{lemma}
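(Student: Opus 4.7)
My plan is to exhibit an explicit isomorphism given by matching generators, and to verify that the defining ideals correspond after a short block-matrix calculation.

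First I would define a $\mathbb{K}$-algebra map
\[
\phi:\mathbb{K}[x_{ij},y_{ij},u_{i},v_{i},t_{1},t_{2},t\mid 1\le i,j\le n-1]\longrightarrow R(n)/(x_{in},y_{ni}\mid 1\le i\le n-1)
\]
that is the identity on $x_{ij},y_{ij}$ for $i,j\le n-1$ and sends $u_{i}\mapsto y_{in}$, $v_{i}\mapsto x_{ni}$, $t_{1}\mapsto x_{nn}$, $t_{2}\mapsto y_{nn}$, $t\mapsto 1$. I need to check $\phi$ kills $J(n-1)+(t-1)$. For this, after imposing $x_{in}=y_{ni}=0$ for $i<n$, the matrices become block-triangular
\[
X_{n}=\begin{pmatrix}X_{n-1}&0\\x_{n}^{t}&x_{nn}\end{pmatrix},\qquad Y_{n}=\begin{pmatrix}Y_{n-1}&y_{n}\\0&y_{nn}\end{pmatrix},
\]
where $y_{n}=(y_{1n},\dots,y_{n-1,n})^{t}$ and $x_{n}^{t}=(x_{n1},\dots,x_{n,n-1})$. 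A direct block computation gives
\[
[X_{n},Y_{n}]=\begin{pmatrix}[X_{n-1},Y_{n-1}]-y_{n}x_{n}^{t}&(X_{n-1}-x_{nn}I_{n-1})y_{n}\\x_{n}^{t}(Y_{n-1}-y_{nn}I_{n-1})&x_{n}^{t}y_{n}\end{pmatrix}.
\]
Under the substitution specified by $\phi$, the vanishing of the upper-left block, upper-right column, and lower-left row is exactly the statement that the generators of $J(n-1)$ map to zero once $t=1$; hence $\phi$ descends to a map $\bar{\phi}:\Tilde{R}(n-1)/(t-1)\to R(n)/(x_{in},y_{ni})$.

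The main (and essentially only) subtle point is the $(n,n)$-entry of $[X_{n},Y_{n}]$, namely $x_{n}^{t}y_{n}$, which does lie among the generators of $I(n)$ but has no counterpart among the generators of $J(n-1)$. I would dispose of it by the trace identity: the sum of the diagonal entries of $[X_{n-1},Y_{n-1}]-y_{n}x_{n}^{t}$ is $\mathrm{tr}[X_{n-1},Y_{n-1}]-x_{n}^{t}y_{n}=-x_{n}^{t}y_{n}$, so $x_{n}^{t}y_{n}$ already belongs to the ideal generated by the diagonal entries of the upper-left block. Equivalently, on the $\Tilde{R}(n-1)$ side, taking the trace of $[X_{n-1},Y_{n-1}]=tuv^{t}$ forces $t\cdot v^{t}u=0$, which at $t=1$ gives $v^{t}u=0$ for free. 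This shows that the would-be missing relation is redundant on both sides, so $\bar{\phi}$ is indeed well-defined.

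Finally I would write down the obvious inverse $\psi:R(n)/(x_{in},y_{ni})\to \Tilde{R}(n-1)/(t-1)$ sending $x_{ij},y_{ij}\mapsto x_{ij},y_{ij}$ for $i,j\le n-1$, $y_{in}\mapsto u_{i}$, $x_{ni}\mapsto v_{i}$, $x_{nn}\mapsto t_{1}$, $y_{nn}\mapsto t_{2}$, and the remaining $x_{in},y_{ni}\mapsto 0$. The same block computation shows that every entry of $[X_{n},Y_{n}]$ is sent to an element of $J(n-1)+(t-1)$ (the $(n,n)$-entry mapping to $v^{t}u$, which as noted vanishes modulo $J(n-1)+(t-1)$). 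Since $\phi$ and $\psi$ are mutually inverse on generators, they are mutually inverse isomorphisms, proving the first statement. The second statement is immediate since by definition $R_{1}(n)=R(n)/(x_{in}\mid 1\le i\le n-1)$, so
\[
R_{1}(n)/(y_{ni}\mid 1\le i\le n-1)=R(n)/(x_{in},y_{ni}\mid 1\le i\le n-1)\cong \Tilde{R}(n-1)/(t-1).
\]
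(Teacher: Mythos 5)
Your proof is correct, and it follows the same route as the paper: the paper defines the map $\varphi$ (your $\psi$) sending $x_{ij},y_{ij}\mapsto x_{ij},y_{ij}$, $y_{in}\mapsto u_i$, $x_{ni}\mapsto v_i$, $x_{nn}\mapsto t_1$, $y_{nn}\mapsto t_2$, $x_{in},y_{ni}\mapsto 0$, and simply asserts surjectivity and that the kernel is $(x_{in},y_{ni})$. Your write-up is more explicit: you carry out the block computation of $[X_n,Y_n]$ and, importantly, you address the only nontrivial point — that the $(n,n)$-entry $x_n^t y_n$ of the commutator has no corresponding generator in $J(n-1)$ — by the trace identity $\mathrm{tr}\bigl([X_{n-1},Y_{n-1}]-t\,uv\bigr)=-t\sum_i u_iv_i$, which forces $\sum_i u_iv_i=0$ in $\Tilde{R}(n-1)/(t-1)$. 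This is exactly the verification the paper leaves to the reader, so your proof fills in a genuine detail rather than taking a different route.
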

\begin{proof}
Consider the $\mathbb{K}$-algebra homomorphism
\begin{align*}
    \varphi:&R(n)\to\Tilde{R}(n-1)/(t-1),\\
    &x_{ij}\mapsto x_{ij},y_{ij}\mapsto y_{ij},y_{in}\mapsto u_i,x_{ni}\mapsto v_i,\\
    &x_{nn}\mapsto t_1,y_{nn}\mapsto t_2,x_{in}\mapsto 0,y_{ni}\mapsto 0,~\mbox{for}~1\leq i,j\leq n-1,
\end{align*}
which is surjecive and has kernel $(x_{in},y_{ni}\mid 1\leq i\leq n-1)$, hence
\[
R(n)/(x_{in},y_{ni}\mid 1\leq i\leq n-1)\cong \Tilde{R}(n-1)/(t-1).
\]
The second equation is given by the definition $R_1(n)=R(n)/(x_{in}\mid 1\leq i\leq n-1)$.
\end{proof}
\begin{lemma}\label{lemma:connection_among_rings2}
    Let $f_{0}=\det(X_{n-1}-x_{nn}I_{n-1})$. Then
    \begin{equation*}
        R_1(n)[f_{0}^{-1}]/(x_{ni}\mid 1\leq i\leq n-1)\cong R(n-1)[f_{0}^{-1}][x_{nn},y_{nn}].
    \end{equation*}
\end{lemma}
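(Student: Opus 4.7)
The plan is to identify both rings through an explicit block-matrix computation of the commutator $[X_n,Y_n]$ combined with invertibility of $f_0$.

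In the quotient $R_1(n)/(x_{ni}\mid 1\le i\le n-1)$, both $x_{in}$ (killed in $R_1(n)$) and $x_{ni}$ are zero, so $X_n=\mathrm{diag}(X_{n-1},x_{nn})$ is block diagonal. Write $Y_n$ in the corresponding $(n-1,1)$-block form with last column $(u^{t},y_{nn})^{t}$ and last row $(v^{t},y_{nn})$, where $u=(y_{1n},\ldots,y_{n-1,n})^{t}$ and $v=(y_{n1},\ldots,y_{n,n-1})^{t}$. A direct calculation gives
\[
[X_n,Y_n]=\begin{pmatrix}[X_{n-1},Y_{n-1}] & (X_{n-1}-x_{nn}I_{n-1})u\\ -v^{t}(X_{n-1}-x_{nn}I_{n-1}) & 0\end{pmatrix},
\]
so the commutator ideal in $R_1(n)/(x_{ni})$ is generated by the entries of $[X_{n-1},Y_{n-1}]$, of $(X_{n-1}-x_{nn}I_{n-1})u$, and of $v^{t}(X_{n-1}-x_{nn}I_{n-1})$.

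After inverting $f_0=\det(X_{n-1}-x_{nn}I_{n-1})$, the matrix $X_{n-1}-x_{nn}I_{n-1}$ has a two-sided inverse, so the last two families of relations are equivalent to $u=0$ and $v=0$; equivalently, $y_{in}=y_{ni}=0$ in $R_1(n)[f_{0}^{-1}]/(x_{ni})$ for all $1\le i\le n-1$. All ``extraneous'' generators therefore vanish, and the only surviving relations are the entries of $[X_{n-1},Y_{n-1}]$, which presents exactly $R(n-1)[x_{nn},y_{nn}][f_0^{-1}]$.

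Formally, I would package this by constructing mutually inverse $\mathbb{K}$-algebra maps: $\varphi:R(n-1)[x_{nn},y_{nn}][f_0^{-1}]\to R_1(n)[f_0^{-1}]/(x_{ni})$ sending generators to generators, well-defined because $[X_{n-1},Y_{n-1}]=0$ already holds in $R(n)$ and $f_0$ is a unit in the target; and $\psi$ in the reverse direction sending $x_{ij},y_{ij},x_{nn},y_{nn}$ to themselves and $y_{in},y_{ni}\mapsto 0$ for $1\le i\le n-1$, well-defined by the preceding block-matrix computation (once $u=v=0$, the relation $[X_n,Y_n]=0$ reduces to $[X_{n-1},Y_{n-1}]=0$). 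They clearly compose to the identity on generators. There is no substantive obstacle here; the entire content is the block commutator identity plus the observation that invertibility of $f_0$ forces $u=v=0$.
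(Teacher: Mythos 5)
Your proof is correct and takes essentially the same approach as the paper: the paper simply writes down the homomorphism $\varphi$ and asserts that the kernel is $(x_{ni}\mid 1\leq i\leq n-1)$, leaving the verification to the reader, while you supply exactly the block-matrix computation of $[X_n,Y_n]$ that makes this verification transparent, showing that inverting $f_0$ forces $y_{in}=y_{ni}=0$ in the quotient.
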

\begin{proof}
    Consider the homomorphism
    \begin{align*}
        \varphi:&R_1(n)[f_{0}^{-1}]\to R(n-1)[f_{0}^{-1}][x_{nn},y_{nn}],\\
        &x_{ij}\mapsto x_{ij},y_{ij}\mapsto y_{ij},x_{ni}\mapsto 0,y_{in}\mapsto 0,\\&y_{ni}\mapsto 0,x_{nn}\mapsto x_{nn},y_{nn}\mapsto y_{nn},
    \end{align*}
where $1\leq i,j\leq n-1$. One may verify that $\varphi$ is well-defined, surjective and $\ker\varphi = (x_{ni}\mid 1\leq i\leq n-1)$.
\end{proof}
We also need the following results to perform the dimension counting. They will work together with \Cref{lemma of flat} in the proof.
\begin{lemma}\label{lemma:Motzkin-Taussky}
    The scheme $\mathrm{Spec} R(n)$ is irreducible and has dimension $n^{2}+n$.
\end{lemma}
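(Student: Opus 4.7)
The statement is a classical result of Motzkin-Taussky \cite{Motzkin-Taussky} and Gerstenhaber \cite{Gerstenhaber}, recalled in the introduction. Since irreducibility and Krull dimension are topological invariants, it suffices to work with the reduced commuting variety $\mathfrak{C}^2_{\mathfrak{gl}_n,red}$. The plan is to invoke their irreducibility theorem as a black box, and then to compute the dimension via an explicit open subscheme.

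Let $U_{\mathrm{rs}}\subset\mathfrak{gl}_n$ be the open dense locus of matrices with $n$ pairwise distinct eigenvalues, and consider the projection
$$\pi:\mathfrak{C}^2_{\mathfrak{gl}_n,red} \longrightarrow \mathfrak{gl}_n,\qquad (X,Y)\mapsto X.$$
For any $X\in U_{\mathrm{rs}}$, a standard fact from linear algebra says that the centralizer $Z_{M_n}(X)=\{Y\in M_n(\mathbb{K}) : [X,Y]=0\}$ equals $\mathbb{K}[X]=\mathrm{span}_{\mathbb{K}}(I,X,\ldots,X^{n-1})$, which is $n$-dimensional. Thus the open subscheme $V:=\pi^{-1}(U_{\mathrm{rs}})$ admits an explicit parametrization
$$U_{\mathrm{rs}}\times\mathbb{A}^n \longrightarrow V,\qquad (X,a_0,\ldots,a_{n-1})\longmapsto \Bigl(X,\textstyle\sum_{i=0}^{n-1} a_i X^i\Bigr),$$
which is bijective on $\mathbb{K}$-points (injectivity follows from the linear independence of $I,X,\ldots,X^{n-1}$ for regular semisimple $X$). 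The source is an irreducible smooth variety of dimension $n^2+n$, so $V$ is irreducible of the same dimension.

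By the Motzkin-Taussky/Gerstenhaber theorem, $\mathfrak{C}^2_{\mathfrak{gl}_n,red}$ is irreducible; since $V$ is a nonempty open subscheme, it is automatically dense, and I conclude
$$\dim\mathrm{Spec}R(n) = \dim V = n^2 + n.$$

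The only nontrivial input is the irreducibility statement, and the main difficulty, which I would bypass by citing the classical result, is Gerstenhaber's argument that every commuting pair arises as a limit of pairs $(X,Y)$ with $X$ regular semisimple; this is shown via joint triangularization of the pair followed by a generic deformation of the diagonal entries of $X$.
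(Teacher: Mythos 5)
Your proof is correct and takes essentially the same route as the paper: cite Motzkin--Taussky for irreducibility of the reduced commuting variety (noting that irreducibility and dimension are topological, hence insensitive to nilpotents in $R(n)$), then compute the dimension on the dense open locus where $X$ is regular semisimple. Your explicit parametrization $U_{\mathrm{rs}}\times\mathbb{A}^n\to V$ via $Y=\sum a_iX^i$ just spells out the dimension count that the paper states without elaboration.
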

\begin{proof}
    By \cite[Theorem 6]{Motzkin-Taussky}, the scheme $\mathrm{Spec} R(n)$ is irreducible. The set of $(A,B)\in M_n(\mathbb{K})^2$ such that $A$ has only simple characteristic roots and $AB=BA$, is dense in the set of closed points of $\mathrm{Spec} R(n)$, and it has dimension $n^2+n$.
\end{proof}

\begin{lemma}\label{lemma: dim(Tilde(R)/(vu,t))}
 The scheme  $\mathrm{Spec}\Tilde{R}(m)/(w_{m},t)$ is equidimensonal of dimension $m^2+m+2$ and is regular at all generic points, where $m\geq 1$.
\end{lemma}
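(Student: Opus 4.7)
The plan follows the three-step strategy outlined in Section~\ref{sec:main_result_and_strategy}: single out an explicit locally closed stratum $\cCV(m)^{\circ}\subseteq\cCV(m)$ whose irreducible decomposition can be written down by hand, show each stratum has dimension $m^{2}+m+2$, prove $\cCV(m)^{\circ}$ is Zariski-dense in $\cCV(m)$, and finally verify regularity at a generic point of each resulting irreducible component by the Jacobian criterion.

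The natural candidate for $\cCV(m)^{\circ}$ is the locus of closed points $(X_{m},Y_{m},\mathbf{u},\mathbf{v},t_{1},t_{2})$ for which $X_{m}$ has $m$ pairwise distinct eigenvalues. Since $t=0$ on $\cCV(m)$ forces $[X_{m},Y_{m}]=0$, both matrices are simultaneously diagonalisable in a common basis $e_{1},\dots,e_{m}$; writing $X_{m}e_{i}=\lambda_{i}e_{i}$ and $Y_{m}e_{i}=\mu_{i}e_{i}$, the equation $(X_{m}-t_{1}I_{m})\mathbf{u}=0$ confines $\mathbf{u}$ to a single eigenline (with $t_{1}=\lambda_{i}$) or forces $\mathbf{u}=0$, and similarly the equation $\mathbf{v}(Y_{m}-t_{2}I_{m})=0$ confines $\mathbf{v}$ to a single left eigenline of $Y_{m}$. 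The extra relation $w_{m}=\mathbf{v}\mathbf{u}=0$ then forces the supporting lines to be indexed by \emph{different} basis vectors whenever $\mathbf{u},\mathbf{v}$ are both nonzero, so $\cCV(m)^{\circ}$ splits into finitely many pieces indexed by ordered pairs $(i,j)$ with $i\neq j$, together with the degenerate strata $\mathbf{u}=0$ and $\mathbf{v}=0$. A direct parameter count on each piece, parametrising via a map from an open subset of $GL_{m}\times\mathbb{A}^{m}\times\mathbb{A}^{m}\times\mathbb{A}^{1}\times\mathbb{A}^{1}$ whose generic fibres are tori of dimension $m$ (the diagonal subgroup stabilising the spectral data), yields the common dimension $m^{2}+m+2$. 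This will be recorded as Proposition~\ref{dim(im(psi))}.

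The main obstacle is the density step, Proposition~\ref{CV^o dense}: an arbitrary closed point may have $X_{m}$ with repeated eigenvalues, or may have $\mathbf{u}=0$ or $\mathbf{v}=0$, and such a point must be realised as a limit of points in $\cCV(m)^{\circ}$ without ever leaving $\cCV(m)$. I would organise this by case analysis on the simultaneous generalised-eigenspace decomposition of the commuting pair $(X_{m},Y_{m})$ relative to the supports of $\mathbf{u}$ and $\mathbf{v}$. When $\mathbf{u},\mathbf{v}$ are anchored in distinct generalised eigenspaces, a small generic commuting perturbation of $X_{m}$ and $Y_{m}$ splits the spectrum while preserving every defining relation, after adjusting $t_{1},t_{2}$ accordingly; the constraint $w_{m}=0$ is automatic because the supports remain disjoint. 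When $\mathbf{u},\mathbf{v}$ are anchored in a common generalised eigenspace, one must first deform inside that block to break it up while using $w_{m}=0$ to keep the deformation inside $\cCV(m)$, thereby reducing to the previous case. This matches the intermediate Propositions~\ref{closure case 1: two different eigenvalues}--\ref{CV^o dense}; the technicality lies in choosing deformations that respect \emph{all} of $[X_{m},Y_{m}]=0$, $(X_{m}-t_{1}I_{m})\mathbf{u}=0$, $\mathbf{v}(Y_{m}-t_{2}I_{m})=0$, and $w_{m}=0$ simultaneously.

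Once Steps~1--2 deliver equidimensionality of $\Spec\Tilde{R}(m)/(w_{m},t)$, regularity at each generic point reduces, by Lemma~\ref{Tilde(R)/(vu,t) regular point}, to exhibiting a single smooth closed point on each irreducible component. I would take the point with $X_{m},Y_{m}$ diagonal and having generic distinct entries, $\mathbf{u}=\alpha e_{i}$, $\mathbf{v}=\beta e_{j}^{*}$ for the pair $(i,j)$ indexing the component (and $\alpha,\beta\neq 0$), and $t_{1}=\lambda_{i}$, $t_{2}=\mu_{j}$. At such a point the Jacobian of the generators of $J(m)+(w_{m},t)$ is block-structured: the $[X,Y]$-block decouples from the $\mathbf{u}$-, $\mathbf{v}$-, and $w_{m}$-blocks once the diagonal form is fixed, the off-diagonal entries of $[X,Y]$ provide $m(m-1)$ independent gradients in the off-diagonal $y_{k\ell}$-directions, the $\mathbf{u}$- and $\mathbf{v}$-equations each contribute $m$ independent gradients using the simplicity of the spectrum and $i\neq j$, while $t$ and $w_{m}$ add two more. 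Matching the resulting rank against the expected codimension in the ambient affine space gives regularity at a dense open set of closed points of each component, and combined with Steps~1--2 this completes the proof of the lemma.
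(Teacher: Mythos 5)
Your overall three-step architecture (explicit dense stratum, dimension count, Jacobian at a generic point) matches the paper's strategy, but your choice of the stratum $\cCV(m)^{\circ}$ is too small, and as a result the density step fails, so the argument does not go through.

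You define $\cCV(m)^{\circ}$ as the locus where $X_{m}$ has $m$ pairwise distinct eigenvalues, and on it you observe that $\mathbf{u}$ is pinned to a single eigenline. The problem is that $\cCV(m)$ is \emph{not} irreducible, and some of its irreducible components consist generically of points where $X_{m}$ has a \emph{repeated} eigenvalue $t_{1}$ of multiplicity $m_{1}\geq 2$, with $\mathbf{u}$ a genuine vector in the full $m_{1}$-dimensional eigenspace (and symmetrically for $Y_{m}$, $t_{2}$, $\mathbf{v}$). The paper's $\psi_{m,m_{1},m_{2}}$ parametrise exactly these pieces, indexed by all pairs $(m_{1},m_{2})$ with $m_{1}+m_{2}\leq m$; each has dimension $m^{2}+m+2$, and none is contained in the closure of another.

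To see concretely that such a component escapes your closure, take $A=aI_{m}$, $B=\operatorname{diag}(b_{1},\dots,b_{m})$ with the $b_{i}$ distinct, $\alpha$ a vector with \emph{all} entries nonzero, $\beta=0$, $t_{1}=a$, $t_{2}$ arbitrary. This is a closed point of $\cCV(m)$ lying in $\operatorname{im}(\psi_{m,m,0})$. Suppose $(A_{n},B_{n},\alpha_{n},\beta_{n},\dots)$ is a sequence in your $\cCV(m)^{\circ}$ converging to it. Then $B_{n}$ converges to a matrix with $m$ distinct eigenvalues, so for large $n$ the pair $(A_{n},B_{n})$ is simultaneously diagonalisable with a change-of-basis matrix $g_{n}$ converging (up to rescaling and permutation) to the identity; but $\alpha_{n}$ must be an eigenvector of $A_{n}$, hence proportional to some $g_{n}e_{k}$, and therefore $\alpha_{n}\to c\,e_{\ell}$ for some $\ell$. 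That contradicts $\alpha$ having all coordinates nonzero when $m\geq 2$. Thus your $\cCV(m)^{\circ}$ is not dense in $\cCV(m)$, and your version of \Cref{CV^o dense} is false as stated.

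The downstream consequences are real: with a non-dense stratum you can neither conclude equidimensionality nor locate a smooth point on each irreducible component, so the Jacobian computation (which you only set up at a point with $X_m$ simple and $\mathbf{u}=\alpha e_{i}$, $\mathbf{v}=\beta e_{j}^{*}$, $i\neq j$) covers only the component corresponding to $m_{1}=m_{2}=1$. To repair the argument you need to (a) enlarge $\cCV(m)^{\circ}$ to the paper's union $\bigcup_{m_{1}+m_{2}\leq m}\operatorname{im}(\psi_{m,m_{1},m_{2}})$, where $X$ may have an eigenvalue $t_{1}$ of any multiplicity $m_{1}$ carrying $\mathbf{u}$, and $Y$ an eigenvalue $t_{2}$ of multiplicity $m_{2}$ carrying $\mathbf{v}$, with the support blocks disjoint so that $w_{m}=0$ automatically; (b) carry out the density argument by the case analysis in \Cref{closure case 1: two different eigenvalues}--\ref{CV^o dense} (splitting off generalised eigenspaces when possible, perturbing within a block otherwise, with the dimension bound of \Cref{closure case 2: dim(eigenspace)<=m/2} to dispose of the deep nilpotent cases); and (c) redo the Jacobian computation at a representative point $P_{m_{1}m_{2}}$ on each component, which now has $m_{1}$ equal diagonal entries of $X$ and $m_{2}$ equal diagonal entries of $Y$, as in \Cref{Tilde(R)/(vu,t) regular point}.
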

\begin{proof}
See Section \ref{Proof of lemma: dim(Tilde(R)/(vu,t))}.
\end{proof}

\begin{lemma}\label{lemma: dim(Tilde(R)/(t))}
 For $m\geq 1$, the scheme $\mathrm{Spec}\Tilde{R}(m)[t^{-1}]/(w_{m})$ is equidimensional of dimension $m^2+m+3$ and dense in $\mathrm{Spec}\Tilde{R}(m)/(w_{m})$. And the schemes  $\mathrm{Spec}\Tilde{R}(m)/(t)$ and $\mathrm{Spec}\Tilde{R}(m)/(t-1)$ are equidimensional of dimension $m^2+m+2$ and are regular at all generic points.
\end{lemma}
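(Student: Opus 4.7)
The plan is to carry out the four stages outlined in Section \ref{sec:main_result_and_strategy}, taking Lemma \ref{lemma: dim(Tilde(R)/(vu,t))} as the principal input. The key structural identity is $t\,w_m \in J(m)$, obtained by taking the trace of $[X_m, Y_m] = t\,u v^{T}$; it implies $\Spec \Tilde{R}(m)[t^{-1}]/(w_m) = \Spec \Tilde{R}(m)[t^{-1}]$ and $\Spec \Tilde{R}(m)[w_m^{-1}]/(t) = \Spec \Tilde{R}(m)[w_m^{-1}]$, yielding the natural open decompositions
\begin{align*}
\Spec \Tilde{R}(m)/(w_m) &= \Spec \Tilde{R}(m)[t^{-1}] \,\cup\, \Spec \Tilde{R}(m)/(w_m, t),\\
\Spec \Tilde{R}(m)/(t) &= \Spec \Tilde{R}(m)[w_m^{-1}] \,\cup\, \Spec \Tilde{R}(m)/(w_m, t).
\end{align*}

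For Stage 1 (density), I would show each closed point $p \in \cCV(m)^{\circ}$ lies in the closure of $\{t \neq 0\}$ by exhibiting a one-parameter deformation through $p$ whose generic member has $t \neq 0$. Using the explicit form of $\cCV(m)^{\circ}$ from the preceding section, this amounts to setting $t = s$, perturbing $X$ or $Y$ by a small element solving $[X', Y] = u v^{T}$, and verifying that the relations $[X, Y] = t\,u v^{T}$, $(X - t_{1} I)u = 0$, $v^{T}(Y - t_{2} I) = 0$, and $w_m = 0$ all persist along the family; the key compatibility is that $u v^{T}$ is traceless on $\cCV(m)^{\circ}$ since $v^{T} u = w_m = 0$, which makes $[X', Y] = u v^{T}$ solvable for generic $Y$. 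Combined with density of $\cCV(m)^{\circ}$ in $\Spec \Tilde{R}(m)/(w_m, t)$ from Lemma \ref{lemma: dim(Tilde(R)/(vu,t))}, this gives density of $\Spec \Tilde{R}(m)[t^{-1}]/(w_m)$ in $\Spec \Tilde{R}(m)/(w_m)$.

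For Stages 2 and 3, the $\mathbb{G}_{m}$-action $\lambda \cdot (X, Y, u, v, t_{1}, t_{2}, t) = (X, Y, \lambda u, v, t_{1}, t_{2}, \lambda^{-1} t)$ yields an isomorphism $\Spec \Tilde{R}(m)[t^{-1}] \cong \Spec \Tilde{R}(m)/(t-1) \times \mathbb{G}_{m}$, while Lemma \ref{lemma:connection_among_rings} identifies $\Tilde{R}(m)/(t-1)$ with $R(m+1)/(x_{i, m+1}, y_{m+1, i})$. Since $R(m+1)$ has dimension $(m+1)^{2} + (m+1)$ and a unique minimal prime by Lemma \ref{lemma:Motzkin-Taussky}, Krull's height theorem applied to the $2m$ cutting equations yields $\dim Z \geq m^{2} + m + 3$ for every irreducible component $Z$ of $\Spec \Tilde{R}(m)[t^{-1}]/(w_m)$. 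For the upper bound, any irreducible component $Z$ of $\Spec \Tilde{R}(m)/(w_m)$ not contained in $V(t)$ satisfies $\dim Z \leq 1 + \dim(Z \cap V(t)) \leq 1 + (m^{2} + m + 2)$ by Krull and Lemma \ref{lemma: dim(Tilde(R)/(vu,t))}, while Stage 1 rules out $Z \subseteq V(t)$. Equidimensionality of $\Spec \Tilde{R}(m)/(t-1)$ of dimension $m^{2} + m + 2$ then transfers via the product decomposition, and regularity at generic points follows from the Jacobian criterion at representatives with $X$ of simple spectrum.

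For Stage 4, the open cover $\Spec \Tilde{R}(m)/(t) = \Spec \Tilde{R}(m)[w_m^{-1}] \cup \Spec \Tilde{R}(m)/(w_m, t)$ reduces the claim to verifying both pieces are equidimensional of dimension $m^{2} + m + 2$ and regular at generic points: the closed piece is Lemma \ref{lemma: dim(Tilde(R)/(vu,t))}, and on the open piece the conditions $w_m \neq 0$, $t = 0$ force $[X, Y] = 0$ with $u$ a right-eigenvector of $X$ of eigenvalue $t_{1}$ and $v$ a left-eigenvector of $Y$ of eigenvalue $t_{2}$, so a Jacobian check at a generic point with $(X, Y)$ of simple joint spectrum yields smoothness and the correct dimension. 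The principal technical difficulty is the deformation argument in Stage 1: it requires going through the explicit case-by-case parameterization of $\cCV(m)^{\circ}$ developed in Section \ref{Proof of lemma: dim(Tilde(R)/(vu,t))} and constructing a compatible deformation in each case, which will parallel the case analysis in that section.
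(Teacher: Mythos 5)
Your proposal follows essentially the same route as the paper's Section 5.2: Stage 1 is the paper's Lemma \ref{Tilde(R)[t^(-1)]/(vu)) dense} (deforming points of $\cCV(m)^{\circ}$ into the locus $t\neq 0$), Stages 2 and 3 are Lemmas \ref{proof of dim(Tilde(R)/(vu))}--\ref{proof of dim(Tilde(R)/(t-1))} (the $\mathbb{G}_m$-action and the $R(m+1)$ connection via Lemma \ref{lemma:connection_among_rings}), and Stage 4 is Lemma \ref{proof of dim(Tilde(R)/t))}. Your observation that $t\,w_m\in J(m)$ (from the trace of $[X_m,Y_m]-tuv^{T}$) and the resulting identifications $\Tilde{R}(m)[t^{-1}]/(w_m)=\Tilde{R}(m)[t^{-1}]$, $\Tilde{R}(m)[w_m^{-1}]/(t)=\Tilde{R}(m)[w_m^{-1}]$ is a clean way of packaging what the paper keeps implicit, and your explicit product decomposition $\Spec\Tilde{R}(m)[t^{-1}]\cong\Spec\Tilde{R}(m)/(t-1)\times\mathbb{G}_m$ is a transparent version of the paper's flatness argument via \Cref{lemma:G_equivariant_implies_flat}.

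However, there is a genuine gap in your upper bound for $\dim Z$. You write that any irreducible component $Z$ of $\Spec\Tilde{R}(m)/(w_m)$ not contained in $V(t)$ satisfies $\dim Z\leq 1+\dim(Z\cap V(t))$ by Krull, but Krull's principal ideal theorem only gives this when $Z\cap V(t)\neq\emptyset$, and nothing you say rules out $Z\subseteq\{t\neq 0\}$. The paper closes this by exhibiting a grading on $\Tilde{R}(m)/(w_m)$ in which $J(m)+(w_m)$ is homogeneous and $t$ sits in positive degree (one must put $u_i$ in degree $0$ and the other generators in degree $1$; the defining ideal is \emph{not} homogeneous in the standard grading), so that by \Cref{lemma: prime divisor homogeneous} every minimal prime is contained in the irrelevant ideal, forcing each irreducible component to pass through the cone point and hence to meet $V(t)$. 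Without this ingredient the argument doesn't close. A smaller issue is that in Stages 3 and 4 you invoke the Jacobian criterion ``at representatives with $X$ of simple spectrum'' or ``simple joint spectrum''; for this to establish regularity at \emph{all} generic points you must first argue that every irreducible component contains such a representative, which is what the paper's density chain (irreducible components of $\mathcal{U}$ contain an irreducible component of the $t=0$ fiber, hence contain $P'_{m_1m_2}$) and its fibration over $(u,v)$-space in Lemma \ref{proof of dim(Tilde(R)/t))} are designed to deliver.
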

\begin{proof}
See Section \ref{Proof of lemma: dim(Tilde(R)/(t))}.
\end{proof}

\begin{corollary}\label{cor:dim_counting_formula}
The ring $R_1(n)$ is equidimensional of dimension $n^2+1$. 
%\XP{\sout{Let
%    \begin{equation*}
%        R_{1}=R(n)/(x_{in}\mid 1\leq i\leq n-1),
%    \end{equation*}
%    and let $\mathcal{X}_{1}=\mathrm{Spec} R_{1}$. Assume $P'$ is the closed point of $\mathcal{X}_{1}$ corresponding to 
%    \begin{equation*}
%        \mathfrak{m}_{1}=(x_{ij},y_{ij}\mid 1\leq i,j\leq n)\subseteq R_{1}.
%    \end{equation*}
%    Then each irreducible component $Z$ of $\mathcal{X}_{1}$ has 
%    \begin{equation*}
%        n^{2}+1\leq \dim (Z)\leq \dim(\mathcal{O}_{\mathcal{X}_{1},P'}).
%    \end{equation*}}}
\end{corollary}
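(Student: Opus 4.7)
The plan is to sandwich $\dim R_1(n)$ between matching upper and lower bounds of $n^2+1$, and then observe that equidimensionality drops out because every irreducible component already attains the lower bound.

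For the lower bound I would invoke Krull's height theorem. By \Cref{lemma:Motzkin-Taussky}, $\mathrm{Spec} R(n)$ is irreducible of dimension $n^{2}+n$, and since $R(n)$ is a finitely generated $\mathbb{K}$-algebra, its reduction is a catenary integral domain of the same Krull dimension. The ideal $(x_{1n},\dots,x_{n-1,n})$ defining $R_{1}(n)$ inside $R(n)$ is generated by $n-1$ elements, so each of its minimal primes $\mathfrak{p}$ has $\mathrm{height}(\mathfrak{p})\le n-1$; by catenarity,
\begin{equation*}
\dim R(n)/\mathfrak{p}\;=\;\dim R(n)-\mathrm{height}(\mathfrak{p})\;\ge\;n^{2}+n-(n-1)\;=\;n^{2}+1.
\end{equation*}
Hence every irreducible component of $\mathrm{Spec} R_{1}(n)$ has dimension at least $n^{2}+1$.

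For the upper bound I would feed in \Cref{lemma:connection_among_rings}, which provides the isomorphism $R_{1}(n)/(y_{n1},\dots,y_{n,n-1})\cong \widetilde{R}(n-1)/(t-1)$. Applying \Cref{lemma: dim(Tilde(R)/(t))} with $m=n-1$ gives that this quotient is equidimensional of dimension $(n-1)^{2}+(n-1)+2=n^{2}-n+2$. Using the standard fact that passing to a quotient by $c$ elements of a Noetherian ring drops the Krull dimension by at most $c$ (an immediate consequence of Krull's height theorem combined with catenarity of finitely generated $\mathbb{K}$-algebras applied to a minimal prime of $R_{1}(n)$ of maximal coheight), we conclude
\begin{equation*}
\dim R_{1}(n)\;\le\;\dim \widetilde{R}(n-1)/(t-1)\;+\;(n-1)\;=\;(n^{2}-n+2)+(n-1)\;=\;n^{2}+1.
\end{equation*}

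Combining, every component of $\mathrm{Spec} R_{1}(n)$ has dimension $\ge n^{2}+1$ while the total dimension is $\le n^{2}+1$, so $R_{1}(n)$ is equidimensional of dimension exactly $n^{2}+1$. There is essentially no real obstacle here: both of the input lemmas have already done the heavy lifting. The one judicious choice is to mod out $R_{1}(n)$ by precisely the variables $y_{ni}$ (rather than, say, $x_{ni}$) so that the resulting ring matches $\widetilde{R}(n-1)/(t-1)$ on the nose, and the Krull lower bound meets the quotient upper bound with no slack. This sharpness is what simultaneously delivers both the dimension and the equidimensionality in one stroke.
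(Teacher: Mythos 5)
Your overall plan—lower bound from Krull's height theorem applied to cutting $R_1(n)$ out of the irreducible $R(n)$, upper bound by further cutting down to $\widetilde{R}(n-1)/(t-1)$—is the same as the paper's. The lower bound is fine. But there is a genuine gap in the upper bound.

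The ``standard fact'' you invoke, that quotienting a Noetherian ring by $c$ elements drops Krull dimension by at most $c$, is \emph{false} for general (non-local) Noetherian rings. It can fail when an irreducible component of $\mathrm{Spec}\,A$ of maximal dimension does not meet $V(f_1,\dots,f_c)$: for instance, for $A = \mathbb{K}[x,y]\times\mathbb{K}[z]$ and $f=(1,z)$, one has $\dim A = 2$ but $\dim A/(f)=0$. What \emph{is} true is the local statement (Krull's PIT in $(A,\mathfrak{m})$ gives $\dim A/(f_1,\dots,f_c)\geq \dim A - c$), together with its global consequence \emph{under the additional hypothesis} that every maximal-dimensional component of $\mathrm{Spec}\,A$ meets $V(f_1,\dots,f_c)$. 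Your parenthetical hint—applying catenarity to a minimal prime $\mathfrak{p}$ of maximal coheight—is exactly the right computation, but it silently presupposes that $\mathfrak{p} + (y_{n1},\dots,y_{n,n-1})$ is a proper ideal, which is precisely the point you need to justify and do not.

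The paper fills this gap using the graded structure: $R_1(n)$ is a graded $\mathbb{K}$-algebra with irrelevant maximal ideal $\mathfrak{m}_1=(x_{ij},y_{ij})$, and by \Cref{lemma: prime divisor homogeneous} every minimal prime $\mathfrak{p}$ of $R_1(n)$ is homogeneous, hence $\mathfrak{p}\subseteq\mathfrak{m}_1$. Since also $(y_{n1},\dots,y_{n,n-1})\subseteq\mathfrak{m}_1$, one gets $\mathfrak{p}+(y_{ni})\subseteq\mathfrak{m}_1\neq(1)$, so every component does meet the relevant subscheme; equivalently one may localize at $\mathfrak{m}_1$ and run the local Krull PIT there, as the paper does. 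If you add this one observation—homogeneity of minimal primes plus $(y_{ni})\subseteq\mathfrak{m}_1$—your argument becomes correct and is essentially identical to the paper's proof.
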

\begin{proof}
By Lemma \ref{lemma:Motzkin-Taussky}, $\mathrm{Spec} R(n)$ is irreducible and has dimension $n^{2}+n$. Since the ideal $(x_{in}\mid 1\leq i\leq n-1)$ is generated by $(n-1)$ elements, for each minimal prime ideal $\mathfrak{p}$ of $R_1(n)$ we have 
\begin{equation*}
    \dim(R_1(n)/\mathfrak{p})\geq n^{2}+n-(n-1)=n^{2}+1.
\end{equation*}
On the other hand, it is easy to see that $R_1(n)$ is a graded $\mathbb{K}$-algebra with the irrelevant ideal 
    \begin{equation*}
        \mathfrak{m}_{1}=(x_{ij},y_{ij}\mid 1\leq i,j\leq n)\subseteq R_1(n).
    \end{equation*}
By \Cref{lemma:connection_among_rings}, we have that
\begin{equation*}
        R_1(n)/(y_{ni}\mid 1\leq i\leq n-1)\cong \Tilde{R}(n-1)/(t-1).
    \end{equation*}

Consider the natural map $\varphi': \mathbb{K}[y_{ni}\mid 1\leq i\leq n-1]\to R_1(n)$ and let 
\begin{equation*}
    \mathfrak{m}_2=\varphi'^{-1}(\mathfrak{m}_1)=(y_{ni}\mid 1\leq i\leq n-1).
\end{equation*}
Therefore, we have
\[
(R_1(n))_{\mathfrak{m}_1}/(\mathfrak{m}_2R_1(n))_{\mathfrak{m}_1}\cong \left(\Tilde{R}(n-1)/(t-1)\right)_{\mathfrak{m}_1'},
\]
where $\mathfrak{m}_1'=\mathfrak{m}_1\Tilde{R}(n-1)/(t-1)$.
By Lemma \ref{lemma: dim(Tilde(R)/(t))}, the local ring $(\Tilde{R}(n-1)/(t-1))_{\mathfrak{m}_1'}$ has dimension $n^2-n+2$, hence
\[
\dim (R_1(n))_{\mathfrak{m}_1}\leq n-1+\dim (\Tilde{R}(n-1)/(t-1))_{\mathfrak{m}_1'}=n^2+1.
\]
By Lemma \ref{lemma: prime divisor homogeneous}, minimal prime ideal $\mathfrak{p}$ of $R_1(n)$ is homogeneous, thus it is contained in $\mathfrak{m}_1$, hence 
\begin{equation*}
    \dim(R_1(n)/\mathfrak{p})\leq \dim (R_1(n))_{\mathfrak{m}_1}\leq n^2+1.
\end{equation*}
Hence $\dim(R_1(n)/\mathfrak{p})= n^2+1$ and therefore $R_1(n)$ is equidimensional of dimension $n^2+1$
\end{proof}

\begin{corollary}\label{cor:Rprime_eq_dim}
    The ring $R'(n)/(v_1,\dots,v_n)$ is equidimensional of dimension $n^2+n+3$. 
\end{corollary}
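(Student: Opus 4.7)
The plan is to reduce this to \Cref{lemma: dim(Tilde(R)/(t))} via the isomorphism
\[
R'(n)/(v_{1},\dots,v_{n})\;\simeq\;\Tilde{R}(n)[t_{2}']/(t)
\]
already flagged in \Cref{sec:main_result_and_strategy}. First I would verify this isomorphism by a direct presentation comparison. Setting $v_{1}=\dots=v_{n}=0$ in $R'(n)$ kills the right-hand side of the defining relation $[X_{n},Y_{n}]-\vec{u}\vec{v}=0$, and also makes the rows $\vec{v}(X_{n}-t_{2}I_{n})=0$ and $\vec{v}(Y_{n}-t_{3}I_{n})=0$ vacuous. The variable $t_{2}$ no longer appears in any relation and so becomes a free polynomial variable, to be renamed $t_{2}'$. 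What remains is generated by the entries of $[X_{n},Y_{n}]$, $(X_{n}-t_{1}I_{n})\vec{u}$, and $\vec{v}'(Y_{n}-t_{3}I_{n})$, in the variables $x_{ij},y_{ij},u_{i},v_{i}',t_{1},t_{3}$, which matches the presentation of $\Tilde{R}(n)/(t)$ (defined by entries of $[X_{n},Y_{n}]-t\vec{u}\vec{v}=[X_{n},Y_{n}]$, $(X_{n}-t_{1}I_{n})\vec{u}$, $\vec{v}(Y_{n}-t_{2}I_{n})$) under the renaming $v_{i}'\mapsto v_{i}$, $t_{3}\mapsto t_{2}$.

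Next, since $\Tilde{R}(n)[t_{2}']/(t)\cong(\Tilde{R}(n)/(t))\otimes_{\mathbb{K}}\mathbb{K}[t_{2}']$ is a polynomial extension in one variable, its spectrum is equidimensional if and only if $\mathrm{Spec}\,\Tilde{R}(n)/(t)$ is, with dimension increased by $1$. By \Cref{lemma: dim(Tilde(R)/(t))}, $\mathrm{Spec}\,\Tilde{R}(n)/(t)$ is equidimensional of dimension $n^{2}+n+2$, so $R'(n)/(v_{1},\dots,v_{n})$ is equidimensional of dimension $n^{2}+n+3$, as claimed.

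There is no serious obstacle here beyond carefully book-keeping the variable renaming in the presentation: the content of the corollary is concentrated in \Cref{lemma: dim(Tilde(R)/(t))}, whose proof is deferred to \Cref{Proof of lemma: dim(Tilde(R)/(t))}.
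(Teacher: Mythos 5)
Your proposal is correct and follows essentially the same route as the paper: invoke the isomorphism $R'(n)/(v_1,\dots,v_n)\cong(\Tilde{R}(n)/(t))[t_2']$ and then apply \Cref{lemma: dim(Tilde(R)/(t))}, adding $1$ to the dimension for the free polynomial variable $t_2'$. The only difference is that you spell out the presentation-level verification of the isomorphism (tracking which generators become vacuous when $v_1=\dots=v_n=0$ and that $t_2$ drops out of all relations), which the paper leaves implicit.
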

\begin{proof}
Since
    \[
    R'(n)/(v_1,\dots,v_n)\cong \left(\Tilde{R}(n)/(t)\right)[t_2'],
    \]
and by Lemma \ref{lemma: dim(Tilde(R)/(t))}, we see that 
$\Tilde{R}(n)/(t)$ is equidimensional of dimension $n^2+n+2$, thus
$R'(n)/(v_1,\dots,v_n)$ is equidimensional of dimension $n^2+n+3$.
\end{proof}
\section{Proof of \Cref{R(n)}}\label{result of GL}
We prove \Cref{R(n)} by induction on $n$. 
Note that for $n=1$,
\begin{equation*}
    R(1)=\mathbb{K}[x_{11},y_{11}],
\end{equation*}
which is Cohen-Macaulay and normal. 

Thus, we only need to finish the induction step. We assume that $R(n-1)$ is Cohen-Macaulay and normal, and we prove that so is $R(n)$. In order to do this, we construct some intermediate rings. One of them is
\begin{equation*}
    \Tilde{R}(n)=\mathbb{K}[x_{ij},y_{ij},u_i,v_i,t_1,t_2,t\mid 1\leq i,j\leq n]/J(n),
\end{equation*} 
where $J(n)$ is the ideal generated by all of the entries of the matrices 
\[
[X_n,Y_n]-t\begin{pmatrix}
    u_1\\
    \vdots\\
    u_n
\end{pmatrix}\left(v_1,\dots,v_n\right),
(X_n-t_{1}I_n)\begin{pmatrix}
    u_1\\
    \vdots\\
    u_n
\end{pmatrix},\left(v_1,\dots,v_n\right)(Y_n-t_{2}I_n),
\]
where $I_n$ is the identity matrix. 

\begin{lemma}\label{Tilde(R)/(vu,t) CM}
If $R(n-1)$ is Cohen–Macaulay and reduced, then $\Tilde{R}(n-1)/(\sum_{i=1}^{n-1}u_{i}v_{i},t)$ is Cohen–Macaulay. %\BY{\sout{and reduced, and it has dimension $n^2-n+2$}}.
\end{lemma}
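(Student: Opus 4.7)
The plan is to follow the strategy outlined in Section \ref{sec:main_result_and_strategy}: realize $\Tilde{R}(n-1)/(w_{n-1},t)$ (with $w_{m}:=\sum_{i=1}^{m}u_{i}v_{i}$) as a localization of $R_{2}(n)$, modulo a regular sequence and up to a residue field extension, and transfer Cohen--Macaulayness through a chain of intermediate rings.

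\emph{Step 1 (from $R(n-1)$ CM and reduced to $R(n)_{\mathfrak{m}'}$ CM).} I would first show that $R(n)_{\mathfrak{m}'}$ is Cohen--Macaulay, where $\mathfrak{m}'$ is the maximal ideal corresponding to the commuting pair $(\mathrm{diag}(0,\dots,0,1),0)$. The geometric idea is that near this point the spectrum of $X_{n}$ splits as $\{0\}$ (with multiplicity $n-1$) and $\{1\}$, so any commuting pair $(X_{n},Y_{n})$ factors as an $(n-1)\times(n-1)$ commuting block (a point of $\mathrm{Spec}\,R(n-1)$) together with an independent $1\times 1$ commuting block. I would realize this block decomposition as a dominant finite-type morphism to a normal equidimensional scheme built out of $R(n-1)$ and apply the flatness criterion Lemma \ref{lemma of flat}, using Lemma \ref{lemma:Motzkin-Taussky} for irreducibility and dimension of $\mathrm{Spec}\,R(n)$ and the hypothesis that $R(n-1)$ is CM and reduced.

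\emph{Step 2 (pass to $R_{2}(n)_{\mathfrak{n}}/(v_{1},\dots,v_{n})$).} In $R_{2}(n)$ the relation $[X_{n},Y_{n}]=\text{(outer product of }u\text{ and }v)$ degenerates, when all $v_{i}=0$, to $[X_{n},Y_{n}]=0$, together with the extra conditions $(X_{n}-t_{1}I_{n})u=0$ (so $u$ is an eigenvector of $X_{n}$ with eigenvalue $t_{1}$) and $\det(X_{n}-t_{2}I_{n})=0$. I would identify $R_{2}(n)_{\mathfrak{n}}/(v_{1},\dots,v_{n})$ as an eigenvector thickening of $R(n)_{\mathfrak{m}'}$ by the auxiliary variables $(u_{i},v_{i}',t_{1},t_{2},t_{3})$, and deduce its Cohen--Macaulayness and reducedness from Step 1 together with the standard preservation of these properties under polynomial extension and under cutting by a regular sequence coming from the eigenvector relations.

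\emph{Step 3 (lift to $R_{2}(n)_{\mathfrak{n}}$ and compare with $\Tilde{R}(n-1)/(w_{n-1},t)$).} I would then verify that $v_{1},\dots,v_{n}$ form a regular sequence in $R_{2}(n)_{\mathfrak{n}}$: Corollary \ref{cor:Rprime_eq_dim} gives $\dim R'(n)/(v)=n^{2}+n+3$, cutting further by $\det(X_{n}-t_{2}I_{n})$ drops dimension by exactly one, and comparison with the dimension of $R_{2}(n)_{\mathfrak{n}}$ shows that $(v_{1},\dots,v_{n})$ cuts out a codimension $n$ locus. Applying Lemma \ref{lemma: A/t CM to A CM} one element at a time lifts Cohen--Macaulayness from Step 2 to $R_{2}(n)_{\mathfrak{n}}$. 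I would then invoke the isomorphism
\[
R_{2}(n)_{\mathfrak{q}}/(v_{i},\,x_{in},\,x_{nn}-1,\,v_{n}'\mid 1\leq i\leq n-1)\;\cong\;\bigl(\Tilde{R}(n-1)/(w_{n-1},t)\bigr)_{\mathfrak{q}'}\otimes_{\mathbb{K}}\mathbb{K}(v_{n})
\]
recorded in Section \ref{sec:main_result_and_strategy}. A further dimension count, using Lemma \ref{lemma: dim(Tilde(R)/(vu,t))}, shows that the displayed generators form a regular sequence in $R_{2}(n)_{\mathfrak{n}}$; so Lemma \ref{lemma: A/t CM to A CM} transfers CM to the left-hand side, and tensoring with the field $\mathbb{K}(v_{n})$ preserves CM, giving CM for $\bigl(\Tilde{R}(n-1)/(w_{n-1},t)\bigr)_{\mathfrak{q}'}$. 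Finally, since $\Tilde{R}(n-1)/(w_{n-1},t)$ is a graded $\mathbb{K}$-algebra with irrelevant ideal $\mathfrak{q}'$, Lemma \ref{lemma: graded CM} upgrades this to the desired global Cohen--Macaulayness.

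\emph{Main obstacle.} The crux is Step 1: constructing the correct morphism that realizes $R(n)_{\mathfrak{m}'}$ as a flat extension relative to $R(n-1)$, identifying its fiber over the special point, and verifying that this fiber is reduced of codimension $\dim R(n)-\dim R(n-1)=n^{2}+n-(n^{2}-n)=2n$ so that Lemma \ref{lemma of flat} applies. The regular-sequence and dimension checks in the remaining steps are then bookkeeping, provided one has the precise dimension formulas supplied by Lemma \ref{lemma: dim(Tilde(R)/(vu,t))}, Lemma \ref{lemma: dim(Tilde(R)/(t))}, and Corollary \ref{cor:Rprime_eq_dim} to hand.
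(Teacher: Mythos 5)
Your outline follows the paper's roadmap — the chain $R(n)_{\mathfrak{m}'} \Rightarrow R_2(n)_{\mathfrak{n}}/(v_1,\dots,v_n) \Rightarrow R_2(n)_{\mathfrak{n}} \Rightarrow \Tilde{R}(n-1)/(w_{n-1},t)$ — and Steps 1, 2 and the final descent via $\mathfrak{q}'$ are in substance what the paper does. However, the lifting step is incorrect as stated. The claim that $(v_1,\dots,v_n)$ cuts out a codimension-$n$ locus in $R_2(n)_{\mathfrak{n}}$, to be removed "one element at a time" via Lemma \ref{lemma: A/t CM to A CM}, fails: the paper's Lemma \ref{dim(R_n)} shows $\dim R_2(n)_{\mathfrak{n}}=n^2+n+3$ while $\dim R_2(n)_{\mathfrak{n}}/(v_1,\dots,v_n)=n^2+n+2$, a drop of $1$, not $n$. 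The reason is the defining relation $(v_1,\dots,v_n)(X_n-t_2 I_n)=0$: since $\det(X_{n-1}-t_2 I_{n-1})$ is a unit in $R_2(n)_{\mathfrak{n}}$, killing $v_n$ already kills $v_1,\dots,v_{n-1}$, so $(v_1,\dots,v_n)=(v_n)$ locally. Only $v_n$ needs to be shown a nonzerodivisor, and a dimension comparison alone cannot do that in a ring not yet known to be Cohen--Macaulay (an embedded prime could contain $v_n$). The paper circumvents this by proving flatness of $\mathrm{Spec}\,R_2(n)_{\mathfrak{n}}\to\mathrm{Spec}\,\mathbb{K}[v_n]_{(v_n)}$ via the G\"ortz--Wedhorn reducedness criterion, which requires both the reducedness of the special fiber from Step 2 and the equidimensionality from Lemma \ref{dim(R_n)}.

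A secondary issue: in Step 2 you propose to get reducedness of the thickened ring by "cutting by a regular sequence coming from the eigenvector relations," but cutting a reduced ring by a regular element need not give a reduced quotient. In the paper, reducedness of $\bigl(\Tilde{R}(n)/(t)\bigr)_{\mathfrak{n}''}$ is obtained by combining the Cohen--Macaulayness from the regular-sequence dimension count (which gives $S_1$) with the regularity at generic points supplied separately by Lemma \ref{lemma: dim(Tilde(R)/(t))} (which gives $R_0$), then applying Serre's criterion, and this is transferred to $R_2(n)_{\mathfrak{n}}/(v_1,\dots,v_n)$ by an \'{e}tale base change.
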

\begin{proof}
See Section \ref{proof of lemma Tilde(R)/(vu,t) CM}. 
\end{proof}

\begin{lemma}\label{Tilde(R)/(t-1)}
Assume that $\Tilde{R}(n-1)/(\sum_{i=1}^{n-1}u_{i}v_{i},t)$ is Cohen-Macualay, reduced and that it has dimension $n^{2}-n+2$. Then the ring $\Tilde{R}(n-1)/(t-1)$ is Cohen–Macaulay and  reduced and has dimension $n^2-n+2$.
\end{lemma}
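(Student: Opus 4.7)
The plan is to obtain the Cohen-Macaulayness and reducedness of $\Tilde{R}(n-1)/(t-1)$ by working through the intermediate ring $A:=\Tilde{R}(n-1)/(w_{n-1})$ with $w_{n-1}=\sum_{i=1}^{n-1}u_iv_i$, and then descending via the $\mathbb{G}_m$-equivariant isomorphism $\Tilde{R}(n-1)[t^{-1}]\cong S[t,t^{-1}]$ (where $S:=\Tilde{R}(n-1)/(t-1)$) induced by the substitution $u_i\mapsto t^{-1}u_i$ with the other generators fixed. Since Lemma \ref{lemma: dim(Tilde(R)/(t))} already gives equidimensionality of $\Tilde{R}(n-1)/(t-1)$ of dimension $n^2-n+2$ together with regularity at all generic points ($R_0$), once Cohen-Macaulayness (hence $S_1$) is established, reducedness is immediate from Serre's criterion (Lemma \ref{lemma:Serres_criterion}).

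To show that $A$ is Cohen-Macaulay, I equip $\Tilde{R}(n-1)$ with the positive weighted grading $\deg(x_{ij})=\deg(y_{ij})=\deg(t_1)=\deg(t_2)=\deg(t)=2$ and $\deg(u_i)=\deg(v_i)=1$, under which every generator of $J(n-1)$ and the element $w_{n-1}$ are homogeneous; consequently $A$ is a non-negatively graded $\mathbb{K}$-algebra with $A_0=\mathbb{K}$ and irrelevant maximal ideal $\mathfrak{m}_0\ni t$. By Lemma \ref{lemma: graded CM} it suffices to prove that $A_{\mathfrak{m}_0}$ is Cohen-Macaulay, and since $A/tA=\Tilde{R}(n-1)/(w_{n-1},t)$ is Cohen-Macaulay by hypothesis, Lemma \ref{lemma: A/t CM to A CM} reduces this to showing that $t$ is a non-zerodivisor in $A$. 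I plan to obtain this by applying Lemma \ref{lemma of flat} to the projection $\pi\colon\Spec A\to\mathbb{A}^1=\Spec\mathbb{K}[t]$: by Lemma \ref{lemma: dim(Tilde(R)/(t))} the source is equidimensional of dimension $n^2-n+3$; the target is normal and $\pi$ is dominant; the restriction of $\pi$ to $\{t\neq 0\}$ is identified, under the $\mathbb{G}_m$-equivariant isomorphism above, with the free extension $\mathbb{K}[t,t^{-1}]\hookrightarrow S[t,t^{-1}]$ and is therefore flat; and the fiber over $0$ is $\Spec\Tilde{R}(n-1)/(w_{n-1},t)$, which by hypothesis is reduced and of dimension $n^2-n+2=\dim\Spec A-\dim\mathbb{A}^1$. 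Flatness of $\pi$ then forces $A$ to be torsion-free over the PID $\mathbb{K}[t]$, which gives that $t$ is a non-zerodivisor, completing the proof that $A$ is Cohen-Macaulay.

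Cohen-Macaulayness of $S$ now follows by descent: $A[t^{-1}]\cong S[t,t^{-1}]$ is a localization of $A$, hence Cohen-Macaulay, and since $t-1$ is a non-zerodivisor in $S[t,t^{-1}]$ with quotient $S$, Cohen-Macaulayness passes to $S$ upon quotienting by this regular element. The dimension and reducedness are concluded as in the first paragraph. The principal obstacle is the flatness verification of $\pi$: it is precisely there that all three pieces of the hypothesis on $\Tilde{R}(n-1)/(w_{n-1},t)$ (Cohen-Macaulayness, reducedness and the dimension value) are simultaneously mobilized, and the explicit $\mathbb{G}_m$-equivariant isomorphism $A[t^{-1}]\cong S[t,t^{-1}]$ is essential both for checking flatness on the open stratum $\{t\neq 0\}$ and for transporting Cohen-Macaulayness from $A$ back to $S$.
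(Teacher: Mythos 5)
Your proposal is correct, and it is a genuinely different organization of the argument than the paper's. The paper localizes $A=\Tilde{R}(n-1)/(w_{n-1})$ at the graded maximal ideal $\mathfrak{m}_2$ first, verifies flatness of $\Spec A_{\mathfrak{m}_2}\to\Spec\mathbb{K}[t]_{(t)}$ directly via the DVR criterion of G\"ortz--Wedhorn (closed points of the special fiber lie in the closure of the generic fiber, plus reducedness of the special fiber), deduces $A_{\mathfrak{m}_2}$ is Cohen--Macaulay and reduced via Lemma \ref{lemma: A/t CM to A CM}, then passes to the smaller prime $\mathfrak{p}=\mathfrak{m}_2\setminus\{t\}$ and transports through the explicit isomorphism $A[t^{-1}]\cong(\Tilde{R}(n-1)/(t-1))[t',t'^{-1}]$ to get the local ring of $\Tilde{R}(n-1)/(t-1)$ at its graded maximal ideal, finishing with the graded Lemmas \ref{lemma: graded CM} and \ref{lemma: prime divisor homogeneous}. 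You instead introduce a weighted grading on $A$ (with $\deg u_i=\deg v_i=1$ and all other generators of degree $2$) to make $J(n-1)+(w_{n-1})$ homogeneous, prove $A$ is globally Cohen--Macaulay (via Lemma \ref{lemma: graded CM} and the same flatness of $\Spec A\to\mathbb{A}^1$, this time via Lemma \ref{lemma of flat}), and then descend to $\Tilde{R}(n-1)/(t-1)$ by localizing at $t$ and quotienting by the regular element $t-1$; reducedness you obtain from Serre's criterion using the $(R_0)$ furnished by Lemma \ref{lemma: dim(Tilde(R)/(t))}. Both use the same key bridge (the $\mathbb{G}_m$-equivariant isomorphism obtained by rescaling $u_i\mapsto t^{-1}u_i$, which is valid because $tw_{n-1}=0$ in $\Tilde{R}(n-1)$ by taking traces), but you delay localization and work globally, at the small cost of requiring the weighted grading and leaning more heavily on Lemma \ref{lemma: dim(Tilde(R)/(t))} (you use both the density/equidimensionality statement and the regularity-at-generic-points statement; the paper uses only density). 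Two minor points worth spelling out when you write this up: (i) Lemma \ref{lemma: dim(Tilde(R)/(t))} states equidimensionality of the dense open $\Spec A[t^{-1}]$, from which equidimensionality of $\Spec A$ itself follows (every irreducible component of $\Spec A$ meets the dense open, and its closure has the same dimension), but you should state this step explicitly since Lemma \ref{lemma of flat} requires $\Spec A$ equidimensional; (ii) the Cohen--Macaulay hypothesis on $\Tilde{R}(n-1)/(w_{n-1},t)$ is not actually used in the flatness verification of $\pi$ (Lemma \ref{lemma of flat} needs only the reducedness and dimension of the special fiber) --- it enters afterward, through Lemma \ref{lemma: A/t CM to A CM}.
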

\begin{proof}
Let
\begin{align*}
    S&=\Tilde{R}(n-1)/(\sum_{i=1}^{n-1} u_{i}v_{i})\\&\simeq\mathbb{K}[x_{ij},y_{ij},u_{i},v_{i},t_{1},t_{2},t\mid 1\leq i,j\leq n-1]/\left(J(n)+\sum_{i=1}^{n-1} u_{i}v_{i}\right).
\end{align*}
Let $\mathfrak{m}_2$ be the ideal of $S$ defined by
\begin{equation*}
    \mathfrak{m}_2=(x_{ij},y_{ij},u_i,v_i,t_1,t_2,t\mid 1\leq i,j\leq n-1).
\end{equation*} 
Let 
\[
\phi_2:\mathrm{Spec}S_{\mathfrak{m}_2}\to \mathrm{Spec}\mathbb{K}[t]_{(t)}
\]
 be the natural map. 
 Note that $\mathbb{K}[t]_{(t)}$ is a discrete valuation ring.
 According to the assumption, the special fiber $\mathrm{Spec}\left(S_{\mathfrak{m}_2}/(t)\right)$ is Cohen–Macaulay and is reduced and has dimension $n^2-n+2$.
Because $\mathrm{Spec}\left(S[t^{-1}]\right)$ is equidimensional of dimension $n^2-n+3$ and dense in $\mathrm{Spec}S$ by Lemma \ref{lemma: dim(Tilde(R)/(t))}, 
 all closed points of the special fiber of $\phi_2$ lie in the closure of generic fiber of $\phi_2$. 
 Then $\phi_2$ is flat by Görtz-Wedhorn \cite[Proposition 14.16]{Gortz}.
So $t$ is not a zero divisor of $S_{\mathfrak{m}_2}$.
Hence $S_{\mathfrak{m}_2}$ is Cohen–Macaulay and reduced by Lemma \ref{lemma: A/t CM to A CM}.

Let $\mathfrak{p}=(x_{ij},y_{ij},u_i,v_i,t_1,t_2\mid 1\leq i,j\leq n-1)$. Then $\mathfrak{p}$ is a prime ideal of $S$ and $\mathfrak{p}\subseteq\mathfrak{m}_2$. 
So $S_{\mathfrak{p}}$ is Cohen–Macaulay and is reduced and has dimension $n^2-n+2$.
Let
\[
\varphi:S[t^{-1}]\to \Tilde{R}(n-1)/(t-1)[t',t'^{-1}]
\]
\[
x_{ij}\mapsto x_{ij},y_{ij}\mapsto y_{ij},u_i\mapsto t'^{-1}u_i,v_i\mapsto v_i,t_1\mapsto t_1,t_2\mapsto t_2,t\mapsto t'
\]
be the $\mathbb{K}$-algebra homomorphism, where $1\leq i,j\leq n-1$. One may verify that $\varphi$ is well-defined and is an isomorphism.
Since $t\notin \mathfrak{p}$, we see that
\begin{align*}
S_{\mathfrak{p}}&\cong \left(\Tilde{R}(n-1)/(t-1)[t',t'^{-1}]\right)_{\varphi(\mathfrak{p})}\\
&\cong\left(\Tilde{R}(n-1)/(t-1)\right)_{\mathfrak{m}_2'}\otimes_{\mathbb{K}}\mathbb{K}(t'), 
\end{align*}
where
\begin{equation*}
    \mathfrak{m}_2'=(x_{ij},y_{ij},u_i,v_i,t_1,t_2\mid 1\leq i,j\leq n-1)\subseteq \Tilde{R}(n-1)/(t-1).
\end{equation*}
Hence $(\Tilde{R}(n-1)/(t-1))_{\mathfrak{m}_2'}$ is Cohen–Macaulay and is reduced and has dimension $n^2-n+2$.
Because $\Tilde{R}(n-1)/(t-1)$ is a finitely generated graded $\mathbb{K}[x_{ij},y_{ij},u_{i},v_i,t_1,t_2\mid 1\leq i,j\leq n-1]$-module, we see that $\Tilde{R}(n-1)/(t-1)$ is Cohen–Macaulay by Lemma \ref{lemma: graded CM}. For every minimal prime ideal $\mathfrak{p}'$ of $\Tilde{R}(n-1)/(t-1)$, it is homogeneous by Lemma \ref{lemma: prime divisor homogeneous}, thus it is contained in $\mathfrak{m}_2'$, thus the local ring $(\Tilde{R}(n-1)/(t-1))_{\mathfrak{p}'}$ is reduced. Because $\Tilde{R}(n-1)/(t-1)$ is Cohen-Macaulay and $(\Tilde{R}(n-1)/(t-1))_{\mathfrak{p}'}$ is reduced for every minimal prime ideal $\mathfrak{p}'$, the ring $\Tilde{R}(n-1)/(t-1)$ is reduced.
\end{proof}
Another intermediate ring we consider is
\begin{equation*}
    R_1(n)=R(n)/(x_{in}\mid 1\leq i\leq n-1),
\end{equation*}
and so there is a natural projection $R(n)\to R_1(n)$. Let $\mathcal{X}_0=\mathrm{Spec}R(n)$ and $\mathcal{X}_1=\mathrm{Spec}R_1(n)$.
 Let 
 \[\mathcal{Z}_i=
 \begin{cases}
     \mathrm{Spec}\mathbb{K}[x_{in}\mid 1\leq i\leq n-1]& \text{if }i=0\\
     \mathrm{Spec}\mathbb{K}[y_{ni}\mid 1\leq i\leq n-1]& \text{if }i=1
 \end{cases}
 \]
 and 
\[
\phi_i:\mathcal{X}_i\to \mathcal{Z}_i
\]
be the natural map for $i=0,1$. Let $P_0$ be the closed point of $\mathcal{Z}_0$ corresponding to the ideal 
\begin{equation*}
    (x_{in}\mid 1\leq i\leq n-1)\subseteq \mathbb{K}[x_{in}\mid 1\leq i\leq n-1],
\end{equation*}
and let $P_1$ be the closed point of $\mathcal{Z}_1$ corresponding to the ideal
\begin{equation*}
    (y_{ni}\mid 1\leq i\leq n-1)\subseteq \mathbb{K}[y_{ni}\mid 1\leq i\leq n-1].
\end{equation*}
Define $U_i=\mathcal{Z}_i\setminus\{P_i\}$ for $i=0,1$. 

%\begin{proof}
%By the assumption, the fiber $\phi_1^{-1}(P_1)$ is Cohen–Macaulay, \XP{\sout{geometrically}} reduced. Its dimension equals $\dim \Tilde{R}(n-1)/(t-1)=n^2-n+2$.
%
%Let $Z$ be an irreducible component of $\mathcal{X}_1$. 
%By \Cref{cor:dim_counting_formula}, we have
%\begin{equation*}
%    n^{2}+1\leq \dim (Z)\leq \dim(\mathcal{O}_{\mathcal{X}_1,P'}),
%\end{equation*}
%where $P'$ is the closed point of $\mathcal{X}_1$ corresponding to $\mathfrak{m}_1=(x_{ij},y_{ij}\mid 1\leq i,j\leq n)\subseteq R_1(n)$. On the other hand, since $\phi_{1}(P')=P_{1}$, we have
%\begin{equation*}
%    \dim(\mathcal{O}_{\mathcal{X}_1,P'})\leq  \dim(\mathcal{O}_{\phi_1^{-1}(P_1),P'})+\dim(\mathcal{O}_{\mathcal{Z}_1,P_1})=n^2+1.
%\end{equation*}
%hence $\dim(Z)=n^2+1$. So $\mathcal{X}_1$ is equidimensional of dimension $n^2+1$. 
%\end{proof}

\begin{lemma}\label{phi_i flat}
    The morphsim $\phi_i|_{\phi_i^{-1}(U_i)}:\phi_i^{-1}(U_i)\to U_i$ is faithfully flat for $i=0,1$.
\end{lemma}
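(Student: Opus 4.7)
The plan is to deduce both statements from Lemma \ref{lemma:G_equivariant_implies_flat} by exhibiting an action of $G = GL_{n-1}$, embedded in $GL_n$ as block-diagonal matrices $g = \mathrm{diag}(A, 1)$ with $A \in GL_{n-1}$, such that $\phi_i$ becomes equivariant with respect to a $G$-action on $U_i$ that is transitive on $U_i(\mathbb{K})$. Both $U_0$ and $U_1$ are open subschemes of $\mathbb{A}^{n-1}$, hence reduced, so the only hypotheses that need verification are equivariance, transitivity on closed points, and nonemptiness of the source.

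For $i = 0$, the group $G = GL_{n-1}$ acts on $\mathcal{X}_0 = \mathrm{Spec}\, R(n)$ by simultaneous conjugation $(X, Y) \mapsto (gXg^{-1}, gYg^{-1})$; this preserves the commutator relations since conjugation is an algebra automorphism. A direct block computation shows that the first $n-1$ entries of the last column of $X$ transform as $(x_{1n}, \dots, x_{n-1,n})^T \mapsto A(x_{1n}, \dots, x_{n-1,n})^T$, so $\phi_0$ is equivariant with respect to the standard left $GL_{n-1}$-action on $\mathcal{Z}_0 \cong \mathbb{A}^{n-1}$; this action is transitive on $U_0(\mathbb{K}) = \mathbb{K}^{n-1} \setminus \{0\}$. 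The pair $(E_{1n}, 0)$ lies in $\phi_0^{-1}(U_0)$ since $[E_{1n}, 0] = 0$ and its image in $\mathcal{Z}_0$ is nonzero, so $\phi_0^{-1}(U_0)$ is nonempty. Lemma \ref{lemma:G_equivariant_implies_flat} then gives the faithful flatness of $\phi_0|_{\phi_0^{-1}(U_0)}$.

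For $i = 1$, the same block-diagonal conjugation action of $GL_{n-1}$ preserves the closed subscheme $\mathcal{X}_1 \subseteq \mathcal{X}_0$ cut out by $x_{in} = 0$ for $1 \leq i \leq n-1$, because the last column of $gXg^{-1}$ is $A$ times the corresponding column of $X$, which remains zero in those positions. Writing $Y$ in block form, the analogous computation shows that the row $(y_{n1}, \dots, y_{n,n-1})$ is sent to $(y_{n1}, \dots, y_{n,n-1}) A^{-1}$; equivalently, transposing, $GL_{n-1}$ acts on $\mathcal{Z}_1$ via $\mathbf{y}^T \mapsto A^{-T} \mathbf{y}^T$, and this action is still transitive on $U_1(\mathbb{K})$. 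The pair $(0, E_{n1})$ belongs to $\phi_1^{-1}(U_1)$, so the same lemma applies.

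I do not expect a genuine obstacle in this argument; the only slightly delicate point is bookkeeping to confirm that the block-diagonal conjugation indeed preserves $\mathcal{X}_1$ (needed because $R_1(n)$ is a quotient of $R(n)$ by an $H$-stable but not $GL_n$-stable ideal) and that both induced actions on the $U_i$ are transitive on closed points. Once those routine verifications are in place, faithful flatness is immediate from Lemma \ref{lemma:G_equivariant_implies_flat}.
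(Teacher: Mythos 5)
Your proof is correct and takes essentially the same approach as the paper: introduce the block-diagonal conjugation action of $GL_{n-1}$, check that $\phi_i$ is equivariant for a standard $GL_{n-1}$-action on $\mathcal{Z}_i$ that is transitive on $U_i(\mathbb{K})$, and invoke Lemma \ref{lemma:G_equivariant_implies_flat}. Your write-up is in fact somewhat more careful than the paper's, since you explicitly verify stability of the ideal cutting out $\mathcal{X}_1$ and nonemptiness of the sources $\phi_i^{-1}(U_i)$ (both of which the lemma implicitly requires and the paper passes over as ``obvious'').
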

\begin{proof}
Let $ GL_{n-1}$ act on $\mathcal{X}_0$ according to the rule 
\[g\cdot (A,B)= (\mathrm{diag}(g,1)\,\, A \,\, \mathrm{diag}(g,1)^{-1},\mathrm{diag}(g,1)\,\, B\,\, \mathrm{diag}(g,1)^{-1})\]
for $(A,B)\in\mathcal{X}_0(\mathbb{K})$ and $ GL_{n-1}$ 
act on $\mathcal{Z}_i$ according to the rule 
\[g\cdot \alpha=
\begin{cases}
    g\alpha\text{, \   where \   } \alpha\in M_{(n-1)\times 1}(\mathbb{K}) & \text{if } i=0\\
    \alpha g^{-1}\text{, \  where \    } \alpha\in M_{1\times (n-1)}(\mathbb{K})  & \text{if } i=1\\
\end{cases}
\]
for $\alpha\in \mathcal{Z}_i(\mathbb{K})$. 
Obviously the action of $GL_{n-1}$ leaves $U_i$, $ \mathcal{X}_i$ and $\phi_i^{-1}(U_i)$ stable. 
Then $\phi_i$ is $GL_{n-1}$-equivariant. 
   Since $U_i$ is reduced and $GL_{n-1}$ acts transitively on $U_i(\mathbb{K})$, the morphsim $\phi_i|_{\phi_i^{-1}(U_i)}$ is faithfully flat by \Cref{lemma:G_equivariant_implies_flat}.
\end{proof}

\begin{lemma}\label{R_1 CM}
If $\Tilde{R}(n-1)/(t-1)$ is Cohen-Macaulay and reduced, then the ring $R_1(n)$ is Cohen–Macaulay and reduced and has
dimension $n^2+1$.
\end{lemma}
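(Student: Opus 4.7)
The plan is to establish flatness of the natural projection $\phi_1:\mathcal{X}_1\to\mathcal{Z}_1$ via Lemma \ref{lemma of flat}, and then deduce the Cohen--Macaulay and reduced properties of $R_1(n)$ by a regular-sequence lifting argument from the hypothesis on $\Tilde{R}(n-1)/(t-1)$.

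First, I will verify the hypotheses of Lemma \ref{lemma of flat} at the closed point $P_1\in\mathcal{Z}_1$. By Corollary \ref{cor:dim_counting_formula}, $\mathcal{X}_1$ is equidimensional of dimension $n^2+1$, and $\mathcal{Z}_1\cong\mathbb{A}^{n-1}$ is smooth and hence normal. The morphism $\phi_1$ is of finite type, and its dominance follows from Lemma \ref{phi_i flat}, which gives faithful flatness (hence surjectivity) of $\phi_1$ over the dense open $U_1$. The flatness of $\phi_1|_{\phi_1^{-1}(U_1)}$ is also the content of Lemma \ref{phi_i flat}. By Lemma \ref{lemma:connection_among_rings}, the special fiber is $\phi_1^{-1}(P_1)\cong\Spec(\Tilde{R}(n-1)/(t-1))$, which is reduced by hypothesis and of dimension $n^2-n+2$ by Lemma \ref{lemma: dim(Tilde(R)/(t))}. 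The dimension identity $\dim\phi_1^{-1}(P_1)=(n^2+1)-(n-1)=\dim\mathcal{X}_1-\dim\mathcal{Z}_1$ thus holds, and $n^2-n+2<n^2+1$ is automatic for $n\geq 2$. Lemma \ref{lemma of flat} therefore yields flatness of $\phi_1$.

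Next, flatness of the ring map $\mathbb{K}[y_{ni}\mid 1\leq i\leq n-1]\hookrightarrow R_1(n)$ implies that $y_{n1},\dots,y_{n,n-1}$ is an $R_1(n)$-regular sequence, since it is a regular sequence in the source polynomial ring. Localize at the graded irrelevant ideal $\mathfrak{M}=(x_{ij},y_{ij})$ of $R_1(n)$. By Lemma \ref{lemma:connection_among_rings}, the quotient $(R_1(n))_\mathfrak{M}/(y_{n1},\dots,y_{n,n-1})$ is a localization of $\Tilde{R}(n-1)/(t-1)$, hence Cohen--Macaulay and reduced by hypothesis. Iterating Lemma \ref{lemma: A/t CM to A CM} along this regular sequence shows that $(R_1(n))_\mathfrak{M}$ itself is Cohen--Macaulay and reduced; Lemma \ref{lemma: graded CM} then promotes Cohen--Macaulayness to $R_1(n)$. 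For reducedness, Lemma \ref{lemma: prime divisor homogeneous} ensures the minimal primes of $R_1(n)$ are homogeneous and thus contained in $\mathfrak{M}$, so reducedness at $\mathfrak{M}$ lifts to reducedness globally. The dimension assertion $\dim R_1(n)=n^2+1$ is already contained in Corollary \ref{cor:dim_counting_formula}.

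The central obstacle is the verification of flatness across the special point $P_1$: while flatness on the open locus $U_1$ comes essentially for free from the $GL_{n-1}$-equivariance argument of Lemma \ref{phi_i flat}, extending flatness across $P_1$ via Lemma \ref{lemma of flat} hinges on the precise dimension match and on the reducedness of the special fiber---both of which are exactly what the inductive hypothesis on $\Tilde{R}(n-1)/(t-1)$ supplies (together with the unconditional dimension count from Lemma \ref{lemma: dim(Tilde(R)/(t))}). Once flatness is in hand, the descent of Cohen--Macaulayness and reducedness is routine.
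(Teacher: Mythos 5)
Your proof is correct and follows essentially the same route as the paper: verify the hypotheses of Lemma \ref{lemma of flat} (equidimensionality of $\mathcal{X}_1$ via Corollary \ref{cor:dim_counting_formula}, normality of $\mathcal{Z}_1$, flatness over $U_1$ via Lemma \ref{phi_i flat}, and reducedness plus the dimension match for the fiber over $P_1$ via Lemma \ref{lemma:connection_among_rings} and the hypothesis), conclude that $y_{n1},\dots,y_{n,n-1}$ is an $R_1(n)$-regular sequence, then localize at the irrelevant ideal and apply Lemma \ref{lemma: A/t CM to A CM}, Lemma \ref{lemma: graded CM} and Lemma \ref{lemma: prime divisor homogeneous} to globalize. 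The only cosmetic difference is that you spell out the dominance of $\phi_1$ and the strict inequality $n^2-n+2<n^2+1$ explicitly, which the paper leaves implicit.
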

\begin{proof}
Note that the fiber $\phi_1^{-1}(P_1)$ is $\mathrm{Spec}R_1(n)/(y_{ni}\mid 1\leq i\leq n-1)$. By \Cref{lemma:connection_among_rings}, we have that
    \begin{equation*}
        R_1(n)/(y_{ni}\mid 1\leq i\leq n-1)\cong \Tilde{R}(n-1)/(t-1).
    \end{equation*}
    Hence, by the assumption, the fiber $\phi_1^{-1}(P_1)$ is Cohen–Macaulay and reduced. By  \Cref{cor:dim_counting_formula}, we see that the scheme $\mathcal{X}_1$ is equidimensional of dimension $n^2+1$. Its dimension equals $\dim \Tilde{R}(n-1)/(t-1)=n^2-n+2=\dim\mathcal{X}_1-\dim \mathcal{Z}_1$.

    Note that $\mathcal{Z}_1$ is normal. By Lemma \ref{lemma of flat} and Lemma \ref{phi_i flat}, $\phi_1$ is flat. Thus $y_{n1},\dots,y_{n,n-1}$ is a $R_1(n)$-sequence. Let $\mathfrak{m}_1=(x_{ij},y_{ij}\mid 1\leq i,j\leq n)\subseteq R_1(n)$.
    Then $(R_1(n))_{\mathfrak{m}_1}/(y_{ni}\mid 1\leq i\leq n-1)$ is Cohen–Macaulay and reduced.
    Thus $(R_1(n))_{\mathfrak{m}_1}$ is Cohen–Macaulay and is reduced by Lemma \ref{lemma: A/t CM to A CM}. 
    Because $R_1(n)$ is a finitely generated graded $\mathbb{K}$-algebra with the irrelevant ideal $\mathfrak{m}_1$, the ring $R_1(n)$ is Cohen–Macaulay by Lemma \ref{lemma: graded CM}. For every minimal prime ideal $\mathfrak{p}'$ of $R_1(n)$, it is homogeneous by Lemma \ref{lemma: prime divisor homogeneous}, thus it is contained in $\mathfrak{m}_1$, thus the local ring $(R_1(n))_{\mathfrak{p}'}$ is reduced. Because $R_1(n)$ is Cohen-Macaulay and $(R_1(n))_{\mathfrak{p}'}$ is reduced for every minimal prime ideal $\mathfrak{p}'$, the ring $R_1(n)$ is reduced by \Cref{lemma:Serres_criterion}.
\end{proof}

%Finally, we will prove that $R(n)$ is Cohen–Macaulay and normal.

\begin{lemma}\label{R(n) CM}
If $R_1(n)$ is Cohen-Macaulay and reduced, then the ring $R(n)$ is Cohen–Macaulay and normal.
\end{lemma}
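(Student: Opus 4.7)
The plan is to mirror the proof of \Cref{R_1 CM}, now using the projection $\phi_{0}:\mathcal{X}_{0}=\Spec R(n)\to\mathcal{Z}_{0}=\Spec\mathbb{K}[x_{in}\mid 1\leq i\leq n-1]$ in place of $\phi_{1}$. First, I would verify the hypotheses of \Cref{lemma of flat} for $\phi_{0}$: the scheme $\mathcal{X}_{0}$ is irreducible of dimension $n^{2}+n$ by \Cref{lemma:Motzkin-Taussky}; the base $\mathcal{Z}_{0}\cong\mathbb{A}^{n-1}$ is normal; generic flatness of $\phi_{0}|_{\phi_{0}^{-1}(U_{0})}$ is supplied by \Cref{phi_i flat}; and the closed fiber $\phi_{0}^{-1}(P_{0})\cong\Spec R_{1}(n)$ is reduced by hypothesis and, by \Cref{cor:dim_counting_formula}, has dimension $n^{2}+1=\dim\mathcal{X}_{0}-\dim\mathcal{Z}_{0}$. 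Hence \Cref{lemma of flat} gives that $\phi_{0}$ is flat, and so $x_{1n},\dots,x_{n-1,n}$ forms a regular sequence on $R(n)$.

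Second, I would localize at the irrelevant ideal $\mathfrak{m}_{1}=(x_{ij},y_{ij}\mid 1\leq i,j\leq n)$. Since $R(n)_{\mathfrak{m}_{1}}/(x_{1n},\dots,x_{n-1,n})\cong R_{1}(n)_{\mathfrak{m}_{1}}$ is Cohen--Macaulay and reduced by hypothesis, iterated application of \Cref{lemma: A/t CM to A CM} lifts both properties to $R(n)_{\mathfrak{m}_{1}}$. Globally, $R(n)$ is then Cohen--Macaulay by \Cref{lemma: graded CM}. For reducedness, every minimal prime of $R(n)$ is homogeneous by \Cref{lemma: prime divisor homogeneous}, hence contained in $\mathfrak{m}_{1}$, and therefore inherits reducedness from $R(n)_{\mathfrak{m}_{1}}$; combined with Serre's $S_{1}$ (from Cohen--Macaulayness) via \Cref{lemma:Serres_criterion}, this gives that $R(n)$ is reduced, and together with the irreducibility supplied by \Cref{lemma:Motzkin-Taussky} the ring $R(n)$ is integral.

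Finally, to upgrade Cohen--Macaulay plus integral to normality, I would apply Serre's criterion (\Cref{lemma:Serres_criterion}): the condition $S_{2}$ is automatic from the Cohen--Macaulay property, so the content is $(R_{1})$, namely that the singular locus of $\Spec R(n)$ has codimension at least two. This is the step I expect to be the main obstacle. The route I would take is Jacobian-theoretic: on the open locus where $X_{n}$ has pairwise distinct eigenvalues, the centralizer is an $n$-dimensional torus and the Jacobian of the commutator map $\mathfrak{gl}_{n}^{2}\to\mathfrak{gl}_{n}$ attains its maximal rank $n^{2}-n$, so this locus is smooth; for the complement, I would stratify by pairs of Jordan types of $(X_{n},Y_{n})$ and show by explicit dimension counting, exploiting the $X\leftrightarrow Y$ symmetry of $R(n)$ to halve the case analysis, that each degenerate stratum has codimension at least two in $\mathcal{X}_{0}$. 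With $(R_{1})$ established, Serre's criterion then yields normality of $R(n)$.
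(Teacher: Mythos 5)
Your first two paragraphs reproduce the paper's argument: flatness of $\phi_{0}$ via \Cref{lemma of flat} and \Cref{phi_i flat} (with the dimension count and reducedness of the closed fiber coming from the hypothesis on $R_{1}(n)$ and from \Cref{cor:dim_counting_formula}), lifting Cohen--Macaulayness and reducedness to $R(n)_{\mathfrak{m}}$ by repeated use of \Cref{lemma: A/t CM to A CM}, then globalizing with \Cref{lemma: graded CM} and \Cref{lemma: prime divisor homogeneous}. (The paper calls the irrelevant ideal $\mathfrak{m}$ rather than $\mathfrak{m}_{1}$, but the content is identical.) This part is correct and essentially the paper's proof.

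The normality step, however, is where your plan diverges from the paper and where it is under-developed. Your explicit smoothness claim --- that the locus where $X_{n}$ has pairwise distinct eigenvalues is smooth --- is true but far too weak to conclude $(R_{1})$: already for $n=2$, the complement of this locus (even after symmetrizing in $X$ and $Y$) has codimension only one in $\Spec R(n)$, because it includes points where $X_{n}$ or $Y_{n}$ is a single Jordan block. So "distinct eigenvalues" misses a large part of the regular locus, and the burden of your proposed Jordan-type stratification would in fact be to re-derive a much sharper criterion from the Jacobian. The paper instead quotes the result of Neubauer--Saltman \cite{Neubauer-Saltman}, which says that a closed point $(A,B)$ of $\Spec R(n)$ is regular if and only if $\dim_{\mathbb{K}}\{C\mid AC=CA,\ CB=BC\}=n$; in particular, if either $A$ or $B$ is \emph{non-derogatory} the point is regular. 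This immediately reduces the problem to bounding the dimension of the locus where both matrices are derogatory, which the paper handles by a scalar-shift trick (derogatory $\Rightarrow$ can be translated to have rank $\leq n-2$) and then invokes Basili's theorem \cite{Basili} on commuting pairs of small rank to get $\dim(Z)\leq n^{2}+n-2$. You would need to locate or prove the analogues of these two inputs for your stratification to close; as written, the proposal identifies the right target ($S_{2}$ is free, $(R_{1})$ is the substance) but does not supply an argument for it.
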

\begin{proof}
By \Cref{lemma:Motzkin-Taussky}, $\mathcal{X}_0$ is irreducible of dimension $n^2+n$.
Because the fiber $\phi_0^{-1}(P_0)$ is $\mathrm{Spec}R_1(n)=\mathcal{X}_1$,
so it is Cohen–Macaulay, reduced and it has dimension $n^2+1$.
By \Cref{lemma of flat} and Lemma \ref{phi_i flat}, the morphsim $\phi_{0}$ is flat. Thus $x_{1n},\dots,x_{n-1,n}$ is an $R(n)$-sequence. Let 
\begin{equation*}
    \mathfrak{m}=(x_{ij},y_{ij}\mid 1\leq i,j\leq n)\subseteq R(n).
\end{equation*}
Since $R_1(n)$ is Cohen-Macaulay and reduced, we see that $R(n)_{\mathfrak{m}}$ is Cohen-Macaulay and reduced by Lemma \ref{lemma: A/t CM to A CM}. Because $R(n)$ is a graded $\mathbb{K}$-algebra with the irrelevant ideal $\mathfrak{m}$, so $R(n)$ is Cohen-Macaulay by \Cref{lemma: graded CM}. For every minimal prime ideal $\mathfrak{p}$ of $R(n)$, it is homogeneous by \Cref{lemma: prime divisor homogeneous}, thus it is contained in $\mathfrak{m}$, and therefore the local ring $R(n)_{\mathfrak{p}}$ is reduced. Because $R(n)$ is Cohen-Macaulay and $R(n)_{\mathfrak{p}}$ is reduced for every minimal prime ideal $\mathfrak{p}$, the ring $R(n)$ is reduced. 

Finally, we will show that the singular locus of $\mathrm{Spec}R(n)$ has codimension $\geq 2$.
If $\mathrm{char}\ \mathbb{K}=0$, this is a direct consequence of Popov \cite[Corollary 1.9]{Popov}. We will prove this for $\mathrm{char}\ \mathbb{K}\geq 0$.
Let $A,B\in M_n(\mathbb{K})$ and $AB=BA$. By \cite[Theorem 1.1]{Neubauer-Saltman}, the closed point $(A,B)$ of $\mathrm{Spec}R(n)$ is a regular point if and only if 
\[
\dim_{\mathbb{K}}\{C\in M_n(\mathbb{K})\mid AC=CA,CB=BC\}=n.
\]
Note that if $A\in M_n(\mathbb{K})$ is non-derogatory, which means that its minimal polynomial is equal to its characteristic polynomial, then $AB=BA$ if and only if $B=f(A)$ for some polynomial $f$.
Thus $(A,B)$ is a regular point if $A$ or $B$ is non-derogatory.

Let $Z$ be the subvariety of $M_n(\mathbb{K})^2$ consisted of $(A',B')\in M_n(\mathbb{K})^2$ such that $A'$ and $B'$ are derogatory and $A'B'=B'A'$, and let $Z'$ be the subvariety of $M_n(\mathbb{K})^2$ consisted of $(A',B')\in M_n(\mathbb{K})^2$ such that $\mathrm{rank}(A')\leq n-2$ and $\mathrm{rank}(B')\leq n-2$ and $A'B'=B'A'$. 
Thus $\mathrm{Spec}R(n)\setminus \overline{Z}$ is regular, and we have a surjective morphsim
\[
Z'\times \mathbb{K}^2\to Z,(A',B',a,b)\to (A'+aI_n,B'+bI_n).
\]
Thus $\dim(Z)\leq \dim(Z')+2$.
By [\ref{Basili}, Theorem 1.4], we see
\[
\dim(Z')=\max\{n^2+n-6+3t-t^2\mid 0\leq t\leq \min\{n-2,2\}\},
\]
thus $\dim(Z)\leq n^2+n-2$, thus the singular locus of $\mathrm{Spec}R(n)$ has codimension $\geq 2$.
Hence $R(n)$ is normal.
\end{proof}

By Lemma \ref{lemma: dim(Tilde(R)/(vu,t))}, Lemma \ref{Tilde(R)/(vu,t) CM}, \Cref{Tilde(R)/(t-1)}, \Cref{R_1 CM} and \Cref{R(n) CM}, we conclude that 
\begin{equation*}
    R(n-1)~\mbox{is Cohen-Macaulay and normal}~\implies~\mbox{so is}~R(n).
\end{equation*}
This completes the proof of \Cref{R(n)}. 

\section{Proof of Lemma \ref{Tilde(R)/(vu,t) CM}}\label{proof of lemma Tilde(R)/(vu,t) CM}

We keep the same notations as in \Cref{result of GL}. 
Our goal is to prove that 
$\Tilde{R}(n-1)/(w_{n-1},t)$ is Cohen–Macaulay, if $R(n-1)$ is Cohen–Macaulay and reduced.

We divide our proof in four steps.

First, we will prove that $R(n)_{\mathfrak{m}'}$ is Cohen–Macaulay (see Lemma \ref{R(n)m' CM}), where $\mathfrak{m}'$ is the ideal of $R(n)$ generated by all of the entries of the matrices 
$X_{n}-\mathrm{diag}(0,\dots,0,1)$ and $Y_{n}$. Let $\mathfrak{m}_1'=\mathfrak{m}'R_1(n)$.

Defined in Section \ref{result of GL}, recall that $R_1(n)=R(n)/(x_{in}\mid 1\leq i\leq n-1)$, $\mathcal{X}_0=\mathrm{Spec}R(n)$,  $\mathcal{X}_1=\mathrm{Spec}R_1(n)$, $     \mathcal{Z}_0=\mathrm{Spec}\mathbb{K}[x_{in}\mid 1\leq i\leq n-1]$, 
 and $\phi_0:\mathcal{X}_0\to \mathcal{Z}_0$ 
 is the natural map, and $P_0$ is the closed point of $\mathcal{Z}_0$ corresponding to the ideal $(x_{in}\mid 1\leq i\leq n-1)$,  and $U_0=\mathcal{Z}_0\setminus\{P_0\}$.

\begin{lemma}\label{dim(R_1m_1')}\label{R_1m_1' CM}
    If $R(n-1)$ is Cohen–Macaulay and reduced, then the local ring $(R_1(n))_{\mathfrak{m}_1'}$ is Cohen-Macaulay, reduced and equidimensional of dimension $n^2+1$. 
\end{lemma}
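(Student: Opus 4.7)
The plan is to localize $\mathcal{X}_1$ near $\mathfrak{m}_1'$ where the geometry simplifies, and then lift Cohen–Macaulayness and reducedness from a transverse fiber. At the point $\mathfrak{m}_1'$ one has $X_{n-1}=0$ and $x_{nn}=1$, so the polynomial $f_0=\det(X_{n-1}-x_{nn}I_{n-1})$ evaluates to $(-1)^{n-1}\neq 0$; in particular $f_0\notin\mathfrak{m}_1'$, so I may pass to the open neighborhood $V:=\mathrm{Spec}(R_1(n)[f_0^{-1}])$. By \Cref{lemma:connection_among_rings2} the quotient $R_1(n)[f_0^{-1}]/(x_{ni}\mid 1\leq i\leq n-1)$ is isomorphic to $R(n-1)[f_0^{-1}][x_{nn},y_{nn}]$; the inductive hypothesis together with \Cref{lemma:Motzkin-Taussky} then makes this quotient Cohen–Macaulay, reduced, and equidimensional of dimension $n^2-n+2$.

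Next, I would consider the morphism $\psi:\mathcal{X}_1\to\mathbb{A}^{n-1}=\mathrm{Spec}\,\mathbb{K}[x_{n1},\dots,x_{n,n-1}]$ and its restriction $\psi|_V$. The scheme $V$ is equidimensional of dimension $n^2+1$ by \Cref{cor:dim_counting_formula}, since $f_0\notin\mathfrak{m}_1'$ guarantees that $f_0$ avoids at least one minimal prime of $R_1(n)$; moreover $\psi|_V$ is dominant, as seen by evaluating on pairs $(X,0)\in\mathcal{X}_1(\mathbb{K})$ with $X$ of the form $\mathrm{diag}(0,\dots,0,x_{nn})$ perturbed by an arbitrary last row and with $x_{nn}\neq 0$. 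Conjugation by $\mathrm{diag}(h,1)$ defines a $GL_{n-1}$-action on $\mathcal{X}_1$ preserving the relations defining $R_1(n)$, and this action intertwines with the right-multiplication action of $GL_{n-1}$ on $\mathbb{A}^{n-1}$; since $GL_{n-1}$ acts transitively on $\mathbb{A}^{n-1}\setminus\{0\}$, \Cref{lemma:G_equivariant_implies_flat} yields faithful flatness of $\psi$ over $\mathbb{A}^{n-1}\setminus\{0\}$. Combined with the Cohen–Macaulay, reduced fiber over the origin of the correct dimension $n^2-n+2<n^2+1=\dim V$ identified above, \Cref{lemma of flat} implies that $\psi|_V$ is flat.

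Flatness of $\psi|_V$ at $\mathfrak{m}_1'$ implies that $x_{n1},\dots,x_{n,n-1}$ form a regular sequence in $(R_1(n))_{\mathfrak{m}_1'}=(R_1(n)[f_0^{-1}])_{\mathfrak{m}_1'}$. The quotient $(R_1(n))_{\mathfrak{m}_1'}/(x_{ni}\mid 1\leq i\leq n-1)$ is a localization of $R(n-1)[f_0^{-1}][x_{nn},y_{nn}]$ at the maximal ideal corresponding to $(X_{n-1},Y_{n-1},x_{nn},y_{nn})=(0,0,1,0)$, hence Cohen–Macaulay and reduced. Iteratively applying \Cref{lemma: A/t CM to A CM} then lifts Cohen–Macaulayness and reducedness back to $(R_1(n))_{\mathfrak{m}_1'}$. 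The dimension equals $n^2+1$ because $R_1(n)$ is a finitely generated equidimensional $\mathbb{K}$-algebra of that dimension by \Cref{cor:dim_counting_formula}, so every maximal ideal has height $n^2+1$ by the dimension formula for catenary $\mathbb{K}$-algebras.

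The main technical obstacle is the flatness verification for $\psi|_V$ via \Cref{lemma of flat}; what makes the argument tractable is that passing to the open set $V$ where $f_0$ is invertible renders the fiber over the origin a polynomial extension of $R(n-1)[f_0^{-1}]$ — already Cohen–Macaulay and reduced by induction — thereby bypassing the a priori delicate question of reducedness for the global fiber of $\psi$ over $0\in\mathbb{A}^{n-1}$.
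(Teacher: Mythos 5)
Your proof is correct and follows the same overall architecture as the paper's: pass to the open locus where $f_0=\det(X_{n-1}-x_{nn}I_{n-1})$ is invertible (which contains $\mathfrak{m}_1'$), identify the central fiber of $\psi$ with $R(n-1)[f_0^{-1}][x_{nn},y_{nn}]$ via \Cref{lemma:connection_among_rings2}, deduce flatness via the $GL_{n-1}$-equivariance and \Cref{lemma of flat}, and then lift Cohen--Macaulayness and reducedness through the regular sequence $x_{n1},\dots,x_{n,n-1}$ using \Cref{lemma: A/t CM to A CM}.

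The one genuine difference is a small but useful streamlining. The paper's proof first establishes only the \emph{local} statement that $(R_1(n))_{\mathfrak{m}_1'}/I_1'$ is Cohen--Macaulay and reduced, and then invokes the openness of the ``CM and reduced'' locus (citing EGA Scholie~7.8.3) to upgrade this to a statement about the whole fiber over $P_1'$ after inverting some auxiliary $f\in R_1(n)\setminus\mathfrak{m}_1'$; this is needed because \Cref{lemma of flat} requires the entire special fiber, not just a localization, to be reduced. You sidestep that step by noticing that $f=f_0$ already works: the isomorphism of \Cref{lemma:connection_among_rings2} shows that the whole fiber $R_1(n)[f_0^{-1}]/(x_{ni}\mid 1\le i\le n-1)\cong R(n-1)[f_0^{-1}][x_{nn},y_{nn}]$ is Cohen--Macaulay, reduced, and (being a localization of a polynomial ring over the irreducible, CM, reduced $R(n-1)$) equidimensional of dimension $n^2-n+2$. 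This renders the openness argument unnecessary and makes the hypotheses of \Cref{lemma of flat} visibly satisfied without passing to a further unspecified open set. The remaining pieces --- the dominance check via the explicit $\mathbb{K}$-points, the $GL_{n-1}$-equivariance giving flatness over $\mathbb{A}^{n-1}\setminus\{0\}$, the regular-sequence lift, and the dimension count from the equidimensionality of $R_1(n)$ at the maximal ideal $\mathfrak{m}_1'$ --- all match the paper's reasoning.
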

\begin{proof}
Let 
    \begin{equation*}
        I'_{1} = (x_{ni}\mid 1\leq i\leq n-1)\subseteq (R_1(n))_{\mathfrak{m}_1'}.
    \end{equation*}
    We first show that the local ring $(R_1(n))_{\mathfrak{m}_1'}/I'_{1}$ is Cohen–Macaulay and reduced. %and $\dim ((R_1(n))_{\mathfrak{m}_1'}/I'_{1}=\dim((R_1(n))_{\mathfrak{m}_1'})-\dim \mathcal{Z}_1'=n^2-n+2$.

    Let $f_{0}=\det(X_{n-1}-x_{nn}I_{n-1})$. By \Cref{lemma:connection_among_rings2}, we have
    \begin{equation*}
        R_1(n)[f_{0}^{-1}]/(x_{ni}\mid 1\leq i\leq n-1)\cong R(n-1)[f_{0}^{-1}][x_{nn},y_{nn}].
    \end{equation*}
Let $\mathfrak{n}_{1}'$ be the image of $\mathfrak{m}_1'$ in above isomorphsim. Since $f_{0}\notin \mathfrak{m}_1'$, we have
\[
(R_1(n))_{\mathfrak{m}_1'}/I'_{1}\cong  \left(R(n-1)[x_{nn},y_{nn}]\right)_{\mathfrak{n}_{1}'}.
\]
Note that $\mathfrak{n}_{1}'$ is a maximal ideal of $R(n-1)[x_{nn},y_{nn}]$.
By \Cref{lemma:Motzkin-Taussky}, we have that
\begin{equation*}
    \dim(R_1(n))_{\mathfrak{m}'_{1}}/I'_{1}=\dim(R(n-1)[x_{n,n},y_{n,n}])_{\mathfrak{n}'_{1}}=n^{2}-n+2.
\end{equation*}
Since $R(n-1)$ is Cohen-Macaulay and reduced, it follows that $(R_1(n))_{\mathfrak{m}_1'}/I'_{1}$ is Cohen-Macaulay and reduced. %\XP{\sout{Since both Cohen-Macaulayness and reducedness are local properties, so there is an open neighborhood of the point corresponding to $\mathfrak{m}_{1}'$ such that it is both Cohen-Macaulay and redunced. Thus $(R_{1}[f^{-1}])/I_1$ is Cohen-Macaulay and reduced  and has dimension $n^2-n+2$ for some $f\in R_1\setminus \mathfrak{m}_1'$}}.\toXP{Check if this is what you mean. We can ref. EGA} 

Now, let $ \mathcal{Z}_1'=\mathrm{Spec}\mathbb{K}[x_{ni}\mid 1\leq i\leq n-1]$
 and $\phi_1':\mathcal{X}_1\to \mathcal{Z}_1'$ 
be the natural map, and $P_1'$ be the closed point of $\mathcal{Z}_1'$ corresponding to the ideal
\begin{equation*}
    (x_{ni}\mid 1\leq i\leq n-1)\subseteq\mathbb{K}[x_{ni}\mid 1\leq i\leq n-1],
\end{equation*}
and $U_1'=\mathcal{Z}_1'\setminus\{P_1'\}$.  
Note that ${\phi'_{1}}^{-1}(P'_{1})=\mathrm{Spec}R_1/(x_{ni}\mid 1\leq i\leq n-1)$ and that the set
\begin{equation*}
    \{p\in {\phi'_1}^{-1}(P'_1)\mid ~\mathcal{O}_{{\phi'_1}^{-1}(P'_1),p}~\mbox{is both Cohen-Macaulay and reduced.}\}
\end{equation*}
is open. This result is a remark of EGA \cite[Scholie 7.8.3]{Grothendieck}. Thus there is an open neighborhood of the point of ${\phi'_1}^{-1}(P'_1)$ corresponding to $\mathfrak{m}_{1}'$ such that it is both Cohen-Macaulay and reduced, hence there exists $f\in R_1(n)\setminus \mathfrak{m}_1'$ such that ${\phi''_{1}}^{-1}(P'_{1})$ is Cohen-Macaulay and reduced and has dimension $n^2-n+2$, where $\phi_1''=\phi'_{1}|_{\mathrm{Spec}R_{1}(n)[f^{-1}]}$.

By \Cref{cor:dim_counting_formula}, we have $R_1(n)[f^{-1}]$ is equidimensional and 
\[
\dim R_1(n)[f^{-1}]= n^{2}+1=\dim{\phi''_{1}}^{-1}(P'_{1})+\dim \mathcal{Z}_1'.
\]

We can define $GL_{n-1}$-actions on $\mathcal{X}_{1}$ and $\mathcal{Z}'_{1}$ in a natural way such that $U'_{1}$, $\mathcal{X}_{1}$ and ${\phi'_{1}}^{-1}(U_{1}')$ are stable under these actions, and that $\phi'_{1}$ is $GL_{n-1}$-equivariant. By \Cref{lemma:G_equivariant_implies_flat}, we can see that $\phi_1'|_{\phi_1'^{-1}(U_1')}:\phi_1'^{-1}(U_1')\to U_1'$ is faithfully flat, hence  $\phi_1''|_{\phi_1''^{-1}(U_1')}$ is flat.

Note that $\mathcal{Z}_1'$ is normal and $\phi_1''$ is dominant. By Lemma \ref{lemma of flat}, the morphsim $\phi_1''$ is flat. Thus  $x_{n1},\dots,x_{n,n-1}$ is a $R_1(n)[f^{-1}]$-sequence. 
    Since $(R_1(n))_{\mathfrak{m}_1'}/I_1'$ is Cohen–Macaulay and reduced, it follows that $(R_1(n))_{\mathfrak{m}_1'}$ is Cohen–Macaulay and reduced by Lemma \ref{lemma: A/t CM to A CM}. 
\end{proof}

\begin{lemma}\label{R(n)m' CM}
    If $R(n-1)$ is Cohen–Macaulay and reduced, then the local ring $R(n)_{\mathfrak{m}'}$ is Cohen–Macaulay of dimension $n^2+n$.
\end{lemma}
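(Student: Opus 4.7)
The plan is to adapt the strategy from the proof of \Cref{R(n) CM} to the local setting: that proof used global Cohen--Macaulayness and reducedness of $R_1(n)$ (together with dimension counting from \Cref{cor:dim_counting_formula}) to obtain flatness of the projection $\phi_0$, whereas here we have only the local information from \Cref{R_1m_1' CM} that $(R_1(n))_{\mathfrak{m}'_1}$ is Cohen--Macaulay, reduced and of dimension $n^{2}+1$. The idea is to restrict $\phi_0$ to a suitable principal open containing the point corresponding to $\mathfrak{m}'$ on which the relevant fiber becomes reduced, and then apply \Cref{lemma of flat}.

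The first step is to produce this open neighborhood. Since $(R_1(n))_{\mathfrak{m}'_1}$ is reduced and the nilradical of the Noetherian ring $R_1(n)$ is finitely generated, this nilradical is annihilated by some $g \in R_1(n) \setminus \mathfrak{m}'_1$, so that $R_1(n)[g^{-1}]$ is reduced. Any lift $\tilde{g} \in R(n)$ of $g$ lies outside $\mathfrak{m}'$ (because $\mathfrak{m}'_1$ is the image of $\mathfrak{m}'$ in $R_1(n)$), and on $D(\tilde{g}) \subseteq \mathcal{X}_0$ the fiber of $\phi_0$ over $P_0$ is $\mathrm{Spec}\,R_1(n)[g^{-1}]$, which is reduced and of dimension $n^{2}+1$ because $R_1(n)$ is equidimensional of that dimension by \Cref{cor:dim_counting_formula}.

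I would then apply \Cref{lemma of flat} to $\phi_0|_{D(\tilde{g})}: D(\tilde{g}) \to \mathcal{Z}_0$, verifying the hypotheses as follows: $\mathcal{Z}_0$ is an affine space, hence normal; by \Cref{lemma:Motzkin-Taussky} the scheme $\mathcal{X}_0$ is irreducible of dimension $n^{2}+n$, so its nonempty open $D(\tilde{g})$ is equidimensional of dimension $n^{2}+n$; the restriction is dominant and of finite type; the fiber over $P_0$ is reduced and of dimension $n^{2}+1 = \dim D(\tilde{g}) - \dim \mathcal{Z}_0 < \dim D(\tilde{g})$; and $\phi_0$ is flat over $U_0$ by \Cref{phi_i flat}. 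This yields flatness of $\phi_0|_{D(\tilde{g})}$, so that $x_{1n},\dots,x_{n-1,n}$ forms a regular sequence in $R(n)[\tilde{g}^{-1}]$, and therefore also in the further localization $R(n)_{\mathfrak{m}'}$.

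Since $R(n)_{\mathfrak{m}'}/(x_{1n},\dots,x_{n-1,n}) \cong (R_1(n))_{\mathfrak{m}'_1}$ is Cohen--Macaulay, iterated application of \Cref{lemma: A/t CM to A CM} shows that $R(n)_{\mathfrak{m}'}$ is Cohen--Macaulay, and the regular sequence yields the dimension equality $\dim R(n)_{\mathfrak{m}'} = \dim (R_1(n))_{\mathfrak{m}'_1} + (n-1) = n^{2}+n$. I expect the main technical point to be the localization argument in the second paragraph: one must combine the openness of the reduced locus in $\mathrm{Spec}\,R_1(n)$ with a suitable choice of lift $\tilde{g} \notin \mathfrak{m}'$ in order to cut out a base on which the fiber over $P_0$ is reduced; once this is done, the remaining flatness and Cohen--Macaulayness arguments go through exactly as in \Cref{R(n) CM}.
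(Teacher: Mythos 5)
Your proposal is correct and follows essentially the same route the paper implicitly intends: the paper's one-line proof points to the arguments of \Cref{R(n) CM} and \Cref{R_1m_1' CM}, which is precisely the "spread out the local property to a principal open neighborhood, apply \Cref{lemma of flat}, then cut down by the regular sequence $x_{1n},\dots,x_{n-1,n}$" strategy you carry out. The one mild difference is in how you produce the neighborhood: the paper (in the proof of \Cref{R_1m_1' CM}) invokes the openness of the locus where the stalk of the fiber is both Cohen--Macaulay and reduced, citing EGA IV Scholie 7.8.3, whereas you only need reducedness of the fiber for \Cref{lemma of flat}, which you obtain directly by observing that the finitely generated nilradical of $R_1(n)$ is killed by some $g \notin \mathfrak{m}'_1$. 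This is a small economy: you never need to know that $R_1(n)[g^{-1}]$ is Cohen--Macaulay, because the Cohen--Macaulayness used at the end comes from the already-proved local statement $(R_1(n))_{\mathfrak{m}'_1}$ in \Cref{R_1m_1' CM}, not from the open set. The dimension bookkeeping via \Cref{cor:dim_counting_formula} and the final step of iterating \Cref{lemma: A/t CM to A CM} along the regular sequence are both handled correctly.
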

\begin{proof}
By \Cref{lemma:Motzkin-Taussky}, $\mathcal{X}_0$ is irreducible of dimension $n^2+n$.
Note that the fiber $\phi_0^{-1}(P_0)$ is $\mathrm{Spec}R_{1}=\mathcal{X}_1$.
Similar to the proof of Lemma \ref{R(n) CM} and Lemma \ref{R_1m_1' CM}, we can see that $R(n)_{\mathfrak{m}'}$ is Cohen–Macaulay of dimension $n^2+n$.
\end{proof}

Let $R'(n)$ and $R_2(n)$ be as defined in \Cref{sec:main_result_and_strategy}.
Let $\mathfrak{n}'$ be the ideal of $R'(n)$ generated by all of the entries of the matrices 
$X_{n}-\mathrm{diag}(0,\dots,0,1),Y_{n}$ and $u_i,v_i,v_i',t_1,t_3,t_2-1$ for $1\leq i\leq n$.
Let $\mathfrak{n}=\mathfrak{n}'R_2(n)$.

Next, we will prove that 
$R_2(n)_{\mathfrak{n}}/(v_1,\dots,v_n)$ is Cohen–Macaulay and reduced.

\begin{lemma}\label{R_n/(v) CM}
If $R(n)_{\mathfrak{m}'}$ is Cohen-Macaulay of dimension $n^{2}+n$, then the local ring $R_2(n)_{\mathfrak{n}}/(v_1,\dots,v_n)$ is Cohen–Macaulay and reduced and has dimension $n^2+n+2$.
\end{lemma}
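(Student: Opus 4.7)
The plan is to present $R_{2}(n)_{\mathfrak{n}}/(v_{1},\dots,v_{n})$ as the Cohen--Macaulay quotient of a polynomial extension of $R(n)_{\mathfrak{m}'}$ by a regular sequence, then invoke Serre's criterion for the reducedness. First I simplify: quotienting the defining relations of $R_{2}(n)$ by $v_{1},\dots,v_{n}$ kills the two blocks $\mathbf{v}(X_{n}-t_{2}I_{n})$ and $\mathbf{v}(Y_{n}-t_{3}I_{n})$ and reduces $[X_{n},Y_{n}]-\mathbf{u}\mathbf{v}$ to $[X_{n},Y_{n}]$, yielding
\[
R_{2}(n)/(v_{1},\dots,v_{n})\;\cong\;R(n)[t_{1},t_{2},t_{3},\mathbf{u},\mathbf{v}']\big/J,
\]
with $J=\bigl((X_{n}-t_{1}I_{n})\mathbf{u},\;\mathbf{v}'(Y_{n}-t_{3}I_{n}),\;\det(X_{n}-t_{2}I_{n})\bigr)$. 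Crucially, the three blocks of extra relations involve pairwise-disjoint sets of newly adjoined variables, namely $\{t_{1},u_{i}\}$, $\{t_{3},v_{j}'\}$ and $\{t_{2}\}$, so they can be processed independently.

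Next I construct the ring in three stages, all localized at $\mathfrak{n}$. By hypothesis, $R(n)_{\mathfrak{m}'}$ is Cohen--Macaulay of dimension $n^{2}+n$. In the first stage I adjoin $t_{2}$ and mod by $\det(X_{n}-t_{2}I_{n})$; since this is monic of degree $n$ in $t_{2}$, it is a non-zerodivisor in $R(n)_{\mathfrak{m}'}[t_{2}]_{(\mathfrak{m}',\,t_{2}-1)}$, so the quotient is Cohen--Macaulay of dimension $n^{2}+n$ by \Cref{lemma: A/t CM to A CM}. In the second stage I adjoin $t_{1},u_{1},\dots,u_{n}$ and mod by the $n$ entries of $(X_{n}-t_{1}I_{n})\mathbf{u}$; Krull's height theorem gives that the dimension drops by at most $n$, and equality (which upgrades the quotient to Cohen--Macaulay) will be established by dimension-counting for the classical eigenvector scheme
\[
\mathcal{V}=\{(X,t,\mathbf{u})\in M_{n}\times\mathbb{A}^{1}\times\mathbb{A}^{n}:(X-tI)\mathbf{u}=0\},
\]
which is equidimensional of dimension $n^{2}+1$: its locus where $\mathbf{u}\neq 0$ projects $GL_{n}$-equivariantly onto $\mathbb{A}^{n}\setminus\{0\}$ with fiber $\{(X,t):Xe_{1}=te_{1}\}$ of dimension $n^{2}-n+1$, giving $n^{2}+1$ in total, and its locus $\{\mathbf{u}=0\}$ also has dimension $n^{2}+1$. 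Combined with the fact that the generic centralizer of $X$ is $n$-dimensional, which accounts for the $Y$-direction in $R(n)$, this yields $\dim R(n)[t_{1},\mathbf{u}]/((X_{n}-t_{1}I_{n})\mathbf{u})=n^{2}+n+1$; since the first-stage $\det$-relation is monic in the new variable $t_{2}$, it remains a non-zerodivisor on top, preserving the dimension drop. The third stage treats $\{t_{3},\mathbf{v}'\}$ symmetrically and yields a Cohen--Macaulay ring of dimension $n^{2}+n+2$, which is isomorphic to $R_{2}(n)_{\mathfrak{n}}/(v_{1},\dots,v_{n})$.

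For reducedness, Cohen--Macaulayness supplies condition $(S_{1})$, so by Serre's criterion (\Cref{lemma:Serres_criterion}) it remains to verify $(R_{0})$: regularity at each minimal prime. The minimal primes passing through $\mathfrak{n}$ are indexed by the vanishing patterns of $\mathbf{u}$ and $\mathbf{v}'$ together with the choice of eigenvalue of $X_{n}$ equated to $t_{2}$; at a generic point of each such component one specializes $X_{n}$ to a matrix with $n$ distinct eigenvalues and checks that the Jacobian of the $2n+1$ defining equations with respect to the $2n+3$ adjoined variables attains its maximal rank $2n+1$. This is a transversality computation that is straightforward on each component individually.

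The main obstacle I anticipate is the dimension count in the second stage. The eigenvector-scheme dimension $n^{2}+1$ is classical in a polynomial ring, but we need the analogous height-$n$ statement inside the nontrivial quotient obtained from the first stage, namely $R(n)_{\mathfrak{m}'}[t_{2}]/(\det(X_{n}-t_{2}I_{n}))$ further extended by $t_{1}$ and $\mathbf{u}$. This should follow from flat base change along the inclusion of $\mathbb{K}[X_{ij}]_{\mathrm{loc}}$ into the first-stage ring, but because $R(n)$ is itself a quotient by the commutator relations $[X_{n},Y_{n}]$, care is required to verify that no unexpected relations among the $n$ entries of $(X_{n}-t_{1}I_{n})\mathbf{u}$ are imported from the $Y$-direction; once this is controlled, the remaining stages follow a cleaner pattern.
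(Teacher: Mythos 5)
Your decomposition of $R_{2}(n)/(v_{1},\dots,v_{n})$ as a quotient of $R(n)[t_{1},t_{2},t_{3},\mathbf{u},\mathbf{v}']$ by the blocks $(X_{n}-t_{1}I_{n})\mathbf{u}$, $\mathbf{v}'(Y_{n}-t_{3}I_{n})$ and $\det(X_{n}-t_{2}I_{n})$ is correct and in the same spirit as the paper's, but the proposal has two genuine gaps, both of which you partially acknowledge at the end.

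The first gap is the dimension count underpinning the regular-sequence claim. To conclude that $(X_{n}-t_{1}I_{n})\mathbf{u}$ cuts the localized CM ring by exactly $n$, you need the Krull dimension of the \emph{localization at the relevant prime} of the quotient to drop by exactly $n$. Your eigenvector-scheme computation (that $\mathcal{V}$ is equidimensional of dimension $n^{2}+1$) is carried out in the polynomial ring $M_{n}\times\mathbb{A}^{1}\times\mathbb{A}^{n}$ and does not transfer to the quotient $R(n)[t_{2}]/(\det(X_{n}-t_{2}I_{n}))$: the ring you actually need to cut is not a free module over this base, and the $Y$-direction is governed by the commutator relations, so the centralizer is not generically $n$-dimensional on \emph{every} component one must consider. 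Asserting that the centralizer dimension ``accounts for the $Y$-direction'' hides exactly the equidimensionality of $\Tilde R(n)/(t)$, which the paper establishes in \Cref{lemma: dim(Tilde(R)/(t))} via a long argument (\Cref{Proof of lemma: dim(Tilde(R)/(t))}) that passes through $\Tilde R(m)/(w_m)$, the set $\cCV(m)^{\circ}$, and flatness of equivariant maps. Your proposal invokes none of this machinery, and the remark that the transfer ``should follow from flat base change'' is speculation, not an argument.

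The second gap is the reducedness step. You propose to check $(R_{0})$ by computing the rank of the Jacobian of the $2n+1$ added relations with respect to only the $2n+3$ adjoined variables. Even if that partial Jacobian has full rank, that only shows that the quotient is smooth over $R(n)_{\mathrm{loc}}$ at the point, and regularity of the quotient additionally requires regularity of $R(n)$ at the corresponding prime -- this is precisely what the full Jacobian computation in \Cref{Tilde(R)/(vu,t) regular point} controls by working back in the polynomial ring and including all $x_{ij},y_{ij}$ columns. Moreover, your description of the minimal primes ``indexed by vanishing patterns of $\mathbf{u}$ and $\mathbf{v}'$'' is only an ansatz: identifying the actual irreducible components is a nontrivial part of the paper (the $\cCV(m)^{\circ}$ analysis in \Cref{Proof of lemma: dim(Tilde(R)/(vu,t))} and \Cref{Proof of lemma: dim(Tilde(R)/(t))}).

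The paper sidesteps both issues by invoking \Cref{lemma: dim(Tilde(R)/(t))} (equidimensionality and generic regularity of $\Tilde R(n)/(t)$) to get CM and reducedness of $(\Tilde R(n)/(t))_{\mathfrak{n}''}$ in one step, and then treats the additional relation $\det(X_{n}-t_{2}'I_{n})=0$ by an \'etale base change (possible because $\partial f/\partial t_{2}'$ is a unit at the relevant prime), so that Cohen--Macaulayness and reducedness descend. You should either cite \Cref{lemma: dim(Tilde(R)/(t))} for the two key facts, or actually carry out the equidimensionality and generic regularity proofs, which are not elementary.
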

\begin{proof}
    Let $\mathfrak{n}''=(x_{ij},y_{ij},x_{in},x_{ni},y_{in},y_{ni},x_{nn}-1,y_{nn},u_i,v_i,u_n,v_n,t_1,t_2\mid 1\leq i,j\leq n-1)\subseteq \Tilde{R}(n)/(t)$.
    Note that
\[
\Tilde{R}(n)/(t)= R(n)[u_i,v_i,t_1,t_2\mid 1\leq i\leq n]/J'
\]
where the ideal $J'$ is generated by all of the entries of the matrices $(X_n-t_1I_n)(u_1,\dots,u_n)^t$ and $(v_1,\dots,v_n)(Y_n-t_2I_n)$, thus it is generated by $2n$ many elements.

Since $R(n)_{\mathfrak{m}'}$ is Cohen–Macaulay of dimension $n^2+n$, $R(n)_{\mathfrak{m}'}[u_i,v_i,t_1,t_2\mid 1\leq i\leq n]$ is Cohen–Macaulay of dimension $n^2+3n+2$, thus $\left(\Tilde{R}(n)/(t)\right)_{\mathfrak{n}''}$ is Cohen–Macaulay of dimension $n^2+n+2$ and is reduced by Lemma \ref{lemma: dim(Tilde(R)/(t))}.
Note that
\begin{equation*}
    R_2(n)_{\mathfrak{n}}/(v_1,\dots,v_n)\cong \left(\Tilde{R}(n)[t'_2]/(t,f)\right)_{\mathfrak{n}'''},
\end{equation*}
%\[
%R_{\mathfrak{n}}/(v_1,\dots,v_n)\cong \left(\left(\Tilde{R}(n)/(t)\right)_{\mathfrak{n}''}[t_2']/(f)\right)_{\mathfrak{n}'''},
%\]
where $f=\det(X_n-t_2'I_n)$ and
\begin{equation*}
\mathfrak{n}'''=\mathfrak{n}''\left(\Tilde{R}(n)[t'_2]/(t,f)\right)+(t_2'-1).
\end{equation*}
Since $\frac{\partial f}{\partial t_2'}\notin \mathfrak{n}'''$, we see that 
\[
\varphi: \mathrm{Spec} \left(\Tilde{R}(n)[t'_2]/(t,f)\right)_{\mathfrak{n}'''} \to \mathrm{Spec}\left(\Tilde{R}(n)/(t)\right)_{\mathfrak{n}''}
\]
is \'{e}tale by Milne \cite[Corollary 3.16]{Milne-Etale}. Since $\varphi$ is \'{e}tale, the closed fiber of $\varphi$ is Cohen-Macaulay,  and $\left(\Tilde{R}(n)/(t)\right)_{\mathfrak{n}''}$ is Cohen–Macaulay of dimension $n^2+n+2$ and is reduced, we see that $R_2(n)_{\mathfrak{n}}/(v_1,\dots,v_n)$ is Cohen–Macaulay of dimension $n^2+n+2$ by \cite[Theorem 2.1.7]{Bruns-Herzog}, and it is reduced by Milne \cite[Remark 3.18]{Milne-Etale}.
\end{proof}

Next, we will prove that 
$R_2(n)_{\mathfrak{n}}$ is Cohen–Macaulay.

\begin{lemma}\label{dim(R_n)}
The local ring $R_2(n)_{\mathfrak{n}}$ is equidimensional of dimension $n^2+n+3$.
\end{lemma}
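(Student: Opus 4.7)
The plan is to compute $\dim R_2(n)_{\mathfrak{n}}$ by comparing it to the already-known dimension $n^2+n+2$ of its quotient $R_2(n)_{\mathfrak{n}}/(v_1,\ldots,v_n)$ from \Cref{R_n/(v) CM}. The key observation is that locally at $\mathfrak{n}$, the entire ideal $(v_1,\ldots,v_n)$ is already principal: it is generated by $v_n$ alone.

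To establish this principalization, use the defining relations $v^{t}(X_n - t_2 I_n) = 0$ coming from $J'(n)$. For $j = 1, \ldots, n-1$, the $j$-th component reads
\[
\sum_{i=1}^{n-1} v_i(x_{ij} - t_2\delta_{ij}) + v_n x_{nj} = 0.
\]
Set $M := (x_{ij} - t_2\delta_{ij})_{1 \leq i,j \leq n-1}$. The image of $M$ in the residue field of $R_2(n)_{\mathfrak{n}}$ is $-I_{n-1}$, since $x_{ij}\in \mathfrak{n}$ for $i\neq j$, $x_{ii}\in\mathfrak{n}$ for $i<n$, and $t_2-1\in\mathfrak{n}$. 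Hence $M$ is invertible in $R_2(n)_{\mathfrak{n}}$, and solving the $n-1$ equations above yields
\[
(v_1,\ldots,v_{n-1}) = -v_n\cdot (x_{n1},\ldots,x_{n,n-1})\, M^{-1},
\]
so $v_i \in (v_n) R_2(n)_{\mathfrak{n}}$ for all $i<n$, and therefore $(v_1,\ldots,v_n) R_2(n)_{\mathfrak{n}} = (v_n) R_2(n)_{\mathfrak{n}}$. Combined with \Cref{R_n/(v) CM}, Krull's principal ideal theorem gives the upper bound $\dim R_2(n)_{\mathfrak{n}} \leq n^2 + n + 3$.

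For the matching lower bound and equidimensionality, it suffices to show that $v_n$ lies in no minimal prime of $R_2(n)_{\mathfrak{n}}$: then $R_2(n)_{\mathfrak{n}}/(v_n)$ being Cohen-Macaulay of dimension $n^2+n+2$ (from \Cref{R_n/(v) CM} together with the principalization) forces every minimal prime of $R_2(n)_{\mathfrak{n}}$ to have dimension exactly $n^2+n+3$. The plan for this step is to exhibit an explicit family of closed points of $\mathrm{Spec}\, R_2(n)$ on which $v_n \neq 0$ and which specializes to the closed point corresponding to $\mathfrak{n}$---for instance, $X = \mathrm{diag}(0,\ldots,0,\varepsilon)$, $Y = 0$, $u = v' = 0$, $v = (0,\ldots,0,1)$, $t_1 = t_3 = 0$, $t_2 = \varepsilon$---and then to perform a dimension count on the generic locus $\mathrm{Spec}\, R_2(n)_{\mathfrak{n}}[v_n^{-1}]$. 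Equivalently, one can first establish $\dim R'(n)_{\mathfrak{n}'} = n^2+n+4$ via the same principalization argument, and then deduce the result by checking that $\det(X_n - t_2 I_n)$ is a non-zero-divisor in $R'(n)_{\mathfrak{n}'}$. The main obstacle lies precisely here: ruling out minimal primes of $R_2(n)_{\mathfrak{n}}$ sitting entirely inside the zero locus of $v_n$ will require a careful analysis of the irreducible components, likely mirroring the parametric argument behind \Cref{cor:Rprime_eq_dim} or a flatness argument over $\mathrm{Spec}\, \mathbb{K}[v_n]$ analogous to those employed elsewhere in the paper via \Cref{lemma of flat}.
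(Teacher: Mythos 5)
Your principalization observation is correct and clean: since the residue of $X_{n-1}-t_2 I_{n-1}$ at $\mathfrak{n}$ is $-I_{n-1}$, the relations $(v_1,\ldots,v_n)(X_n-t_2I_n)=0$ force $(v_1,\ldots,v_n)R_2(n)_{\mathfrak{n}}=(v_n)R_2(n)_{\mathfrak{n}}$. The paper makes exactly this observation too, but only in the \emph{next} lemma (in the proof of \Cref{R_n CM}, where it is phrased as ``$v_i=0$ in $R_2(n)_{\mathfrak{n}}/(v_n)$ since $\det(X_{n-1}-t_2I_{n-1})\notin\mathfrak{n}$''). Combined with \Cref{R_n/(v) CM} and Krull's principal ideal theorem, this indeed gives the upper bound $\dim R_2(n)_{\mathfrak{n}}\le n^2+n+3$. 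So the easy half of the lemma is done.

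The genuine content of the lemma, however, is equidimensionality, and your proposal does not establish it; you say so yourself. Showing that $v_n$ lies in no minimal prime of $R_2(n)_{\mathfrak{n}}$ is \emph{not} a small remaining step here: it is, essentially, the assertion that $\mathrm{Spec}\,R_2(n)_{\mathfrak{n}}[v_n^{-1}]$ is dense, which is precisely the kind of density/equidimensionality claim that the whole machinery of Sections \ref{Proof of lemma: dim(Tilde(R)/(vu,t))} and \ref{Proof of lemma: dim(Tilde(R)/(t))} was built to prove. Moreover, even granting that $v_n$ avoids the minimal primes, your closing deduction (``$R_2(n)_{\mathfrak{n}}/(v_n)$ Cohen--Macaulay of dimension $n^2+n+2$ forces every minimal prime to have dimension $n^2+n+3$'') is more delicate than stated: one must rule out a minimal prime $\mathfrak{p}$ with $v_n\notin\mathfrak{p}$ for which $\mathfrak{p}+(v_n)$ fails to be contained in any \emph{minimal} prime of $(v_n)$; Lemma \ref{lemma: A/t CM to A CM} requires a nonzerodivisor, which is a strictly stronger condition than ``not in any minimal prime'' until one already knows the ring is (say) $S_1$. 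In the paper this circularity is broken by proving equidimensionality first (this lemma), and only afterwards deducing Cohen--Macaulayness and nonzerodivisor properties in \Cref{R_n CM}.

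The paper's route to equidimensionality is quite different from what you sketch. It inverts $h_1(t_1-t_2)$ and maps $\mathrm{Spec}\,R_2(n)[(h_1(t_1-t_2))^{-1}]$ to $\mathbb{A}^n$ via the coordinates $(v_1,\ldots,v_n)$. On the open locus $v\ne 0$, it identifies the fiber over $v=(0,\ldots,0,1)$ with a Laurent-polynomial extension of $\Tilde{R}(n-1)/(w_{n-1},t)$ (known to be equidimensional of dimension $n^2+3$ by \Cref{lemma: dim(Tilde(R)/(vu,t))}) and uses $GL_n$-equivariance plus \Cref{lemma:G_equivariant_implies_flat} to spread this to all of $\psi^{-1}(U)$, giving equidimensionality of dimension $n^2+n+3$ there. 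For the locus $v=0$, it observes the ideal-theoretic identity $v_i\det(Y_n-t_3I_n)=0$ in $R'(n)$, whence $\mathrm{Spec}\,R'(n)[\cdots]$ decomposes set-theoretically into $\psi^{-1}(U)$ and $V(v_1,\ldots,v_n)$; the latter is equidimensional of dimension $n^2+n+3$ by \Cref{cor:Rprime_eq_dim}, and these two pieces together give the equidimensionality of $\mathcal{U}$. Your suggestion to use \Cref{lemma of flat} or a flatness argument over $\mathbb{K}[v_n]$ is in the right spirit, but to carry it out you would still need a density/equidimensionality input of comparable strength; there is no shortcut here that avoids the dimension bookkeeping of Section \ref{proof of lemma dim(Tilde(R)/(vu,t)) and lemma dim(Tilde(R)/(t))}.
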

\begin{proof}
Let $h_i\in R'(n)$ be defined by the formula $\det(X_{n}-(t_2-\lambda)I_{n})=\sum_{i=0}^nh_i\lambda^i$. 
Let $h=\det(X_{n-1}-x_{nn}I_{n-1})$. 
Let
\begin{align*}
\varphi:&\  R_2(n)[(h_1(t_1-t_{2}))^{-1}]/(v_1,\dots,v_{n-1},v_n-1)\to \\
&\left(\Tilde{R}(n-1)/(w_{n-1},t)\right)[x_{in},v_n',(h(t_1-x_{nn}))^{-1}\mid 1\leq i\leq n]
\end{align*}
be the $\mathbb{K}$-algebra homomorphsim such that
\[
X_n\mapsto \begin{pmatrix}
    X_{n-1}& \begin{pmatrix}
        x_{1n}\\
        \vdots\\
        x_{n-1,n}
    \end{pmatrix}\\
    0&x_{nn}
\end{pmatrix},
Y_n\mapsto \begin{pmatrix}
    Y_{n-1}&Y'\\
    0&t_2
\end{pmatrix},\begin{pmatrix}
    u_1\\
    \vdots\\
    u_n
\end{pmatrix}\mapsto \begin{pmatrix}
    u_1\\
    \vdots\\
    u_{n-1}\\
    0
\end{pmatrix},
\]
\[
(v_1',\dots,v_n')\mapsto (v_1,\dots,v_{n-1},v_n'),(t_1,t_2,t_3)\mapsto (t_1, x_{nn},t_2),
\]
\[
Y'=(Y_{n-1}-t_2I_{n-1})(X_{n-1}-x_{nn}I_{n-1})^{-1}\begin{pmatrix}
    x_{1n}\\
    \vdots\\
    x_{n-1,n}
\end{pmatrix}+(t_1-x_{nn})^{-1}\begin{pmatrix}
    u_{1}\\
    \vdots\\
    u_{n-1}
\end{pmatrix}.
\]
One may verify that $\varphi$ is well-defined and it is an isomorphsim.    
Thus $R_2(n)[(h_1(t_1-t_{2}))^{-1}]/(v_1,\dots,v_{n-1},v_n-1)$ is equidimensional of dimension $n^2+3$ by Lemma \ref{lemma: dim(Tilde(R)/(vu,t))}.

Let 
\[
\psi:\mathcal{U}=\mathrm{Spec}R_2(n)[(h_1(t_1-t_{2}))^{-1}] \to \mathcal{V}=\mathrm{Spec}\mathbb{K}[v_i\mid 1\leq i\leq n]
\]
 be the natural map and $U\subseteq\mathcal{V}$ such that $\mathcal{V}\setminus U$ is the closed point of $\mathcal{V}$ corresponding to the ideal $(v_1,\dots,v_n)$.
We can define $GL_{n}$-actions on $\mathcal{U}$ and $\mathcal{V}$ in a natural way such that $U$ and $\psi^{-1}(U)$ are stable under these actions, and that $\psi$ is $GL_{n}$-equivariant.
Similar to the proof of Lemma \ref{phi_i flat}, we can see that $\psi|_{\psi^{-1}(U)}:\psi^{-1}(U)\to U$ is faithfully flat and $\psi^{-1}(p)\cong\mathrm{Spec}R_2(n)[(h_1(t_1-t_{2}))^{-1}]/(v_1,\dots,v_{n-1},v_n-1)$ for any closed point $p\in U$.
Let $Z$ be an irreducible component of $\psi^{-1}(U)$.
We can take a closed point $q\in Z$ such that $\dim(\mathcal{O}_{Z,q})=\dim(\mathcal{O}_{\psi^{-1}(U),q})$.
Since $\psi|_{\psi^{-1}(U)}$ is flat and $\psi(q)$ is a closed point of $U$, we have
\[
\dim(Z)=\dim(\mathcal{O}_{\psi^{-1}(U),q})=\dim(\mathcal{O}_{\psi^{-1}(\psi(q)),q})+n= n^2+n+3.
\]
Hence $\psi^{-1}(U)$ is equidimensional of dimension $n^2+n+3$.

As matrices over $R'(n)$, we have $(v_1,\dots,v_n)(Y_n-t_{3}I_n)=0$, then $v_i\det(Y_n-t_{3}I_n)=0$ in $R'(n)$ for $1\leq i\leq n$. 
Let $J_1=(v_1,\dots,v_n)$ and $J_2=(\det(Y_n-t_{3}I_n))$ be the ideals of $\mathbb{K}[x_{ij},y_{ij},u_i,v_i,v_i',t_1,t_2,t_3\mid 1\leq i,j\leq n]$.
Thus $J_1J_2\subseteq J'(n)$ where $J'(n)$ is defined in \Cref{sec:main_result_and_strategy} and
\begin{align*}
&\sqrt{J'(n)}=\sqrt{J'(n)^2}\subseteq\sqrt{(J'(n)+J_1)(J'(n)+J_2)}\\
&=\sqrt{J'(n)^2+(J_1+J_2)J'(n)+J_1J_2}\subseteq\sqrt{J'(n)}.
\end{align*}
Hence 
\begin{align*}
    \mathrm{Spec}R'(n)[(h_1(t_1-t_{2}))^{-1}]&=\mathcal{U}\bigcup \mathrm{Spec}R'(n)[(h_1(t_1-t_{2}))^{-1}]/(v_1,\dots,v_n)\\
    &=\psi^{-1}(U)\bigcup \mathrm{Spec}R'(n)[(h_1(t_1-t_{2}))^{-1}]/(v_1,\dots,v_n).
\end{align*}
By \Cref{cor:Rprime_eq_dim}, 
 we see that $R'(n)[(h_1(t_1-t_{2}))^{-1}]/(v_1,\dots,v_n)$ is equidimensional of dimension $n^2+n+3$, thus $\mathcal{U}$ is equidimensional of dimension $n^2+n+3$. Thus $R_2(n)_{\mathfrak{n}}$ is equidimensional of dimension $n^2+n+3$ since $h_1(t_1-t_{2})\notin \mathfrak{n}$.
\end{proof}

\begin{lemma}\label{R_n CM}
If $R_2(n)_{\mathfrak{n}}/(v_1,\dots,v_n)$ is Cohen–Macaulay and reduced and has dimension $n^2+n+2$, then the local ring $R_2(n)_{\mathfrak{n}}$ is Cohen–Macaulay of dimension $n^2+n+3$.
\end{lemma}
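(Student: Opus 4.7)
The plan is to reduce to Lemma \ref{lemma: A/t CM to A CM}: I will show that for $A = R_2(n)_{\mathfrak n}$ the ideal $(v_1,\ldots,v_n)A$ is actually principal, equal to $(v_n)A$, and that $v_n$ is a nonzerodivisor on $A$. Combined with the hypothesis that $A/(v_1,\ldots,v_n)$ is Cohen--Macaulay, this yields $A$ Cohen--Macaulay, with dimension $n^2+n+3$ as already recorded in Lemma \ref{dim(R_n)}.

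\textbf{Principalization.} The relation $(v_1,\ldots,v_n)(X_n-t_2I_n)=0$ descends from $R_2(n)$ to $A$. At the specialization corresponding to $\mathfrak n$ the matrix $X_n-t_2I_n$ becomes $\mathrm{diag}(-1,\ldots,-1,0)$, so the determinant $g$ of its top-left $(n-1)\times(n-1)$ block evaluates to $(-1)^{n-1}\neq 0$; hence $g\notin\mathfrak n$, and $g$ is a unit in $A$. Cramer's rule applied to the linear system $v\cdot(X_n-t_2I_n)=0$ then writes each $v_i$ with $i<n$ as an explicit $A$-multiple of $v_n$. Therefore $(v_1,\ldots,v_n)A=(v_n)A$, and $A/(v_n)=A/(v_1,\ldots,v_n)$ is Cohen--Macaulay and reduced of dimension $n^2+n+2$ by hypothesis.

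\textbf{Nonzerodivisor via Lemma \ref{lemma of flat}.} I will apply Lemma \ref{lemma of flat} to the projection
\[
\pi\colon X:=\mathrm{Spec}\,R_2(n)\bigl[g^{-1},(h_1(t_1-t_2))^{-1},h^{-1}\bigr]\longrightarrow\mathbb{A}^{1}=\mathrm{Spec}\,\mathbb{K}[v_n]
\]
induced by $v_n$. Here $h_1(t_1-t_2)$ is the element from Lemma \ref{dim(R_n)} that witnesses equidimensionality of $X$ in dimension $n^2+n+3$, and $h\in R_2(n)\setminus\mathfrak n$ is chosen, using the openness of the reduced locus (already invoked in the proof of Lemma \ref{R_1m_1' CM}), so that $X\cap V(v_n)$ is reduced; shrinking $X$ further removes any components of $X$ on which $v_n$ is identically zero, ensuring $\dim(X\cap V(v_n))=\dim X-1=n^2+n+2$. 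Flatness of $\pi$ over $\mathbb{G}_m\subset\mathbb{A}^{1}$ is obtained from Lemma \ref{lemma:G_equivariant_implies_flat} via the $\mathbb{G}_m$-action on $R_2(n)$ defined by $\lambda\cdot v_i=\lambda v_i$, $\lambda\cdot u_i=\lambda^{-1}u_i$ and all other generators fixed: this action preserves the defining ideal of $R_2(n)$ (the scaling on $uv^{T}$ cancels, each remaining generator is merely multiplied by $\lambda^{\pm 1}$, and both $g$ and $\det(X_n-t_2I_n)$ are invariant), and it is equivariant with the scaling action of $\mathbb{G}_m$ on $\mathbb{A}^{1}$, which is transitive on $\mathbb{G}_m(\mathbb{K})$. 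With all hypotheses of Lemma \ref{lemma of flat} now in place, $\pi$ is flat; consequently $v_n$ is a nonzerodivisor on $\Gamma(X,\mathcal{O}_X)$ and, after further localization at $\mathfrak n$, on $A$. Lemma \ref{lemma: A/t CM to A CM} applied to the nonzerodivisor $v_n$ and the Cohen--Macaulay quotient $A/(v_n)$ then completes the proof.

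The delicate point of the argument will be the globalization step: the hypothesis supplies reducedness and the correct dimension of $A/(v_1,\ldots,v_n)$ only at the stalk $\mathfrak n$, so one must carefully combine openness of the reduced locus with the global equidimensionality statement from the proof of Lemma \ref{dim(R_n)} to produce the affine neighborhood $X$ on which Lemma \ref{lemma of flat} can be applied directly.
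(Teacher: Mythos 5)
Your proposal is essentially correct in its core strategy, which matches the paper's: first show that $(v_1,\ldots,v_n)R_2(n)_{\mathfrak n}=(v_n)R_2(n)_{\mathfrak n}$ using the relation $(v_1,\ldots,v_n)(X_n-t_2I_n)=0$ and the invertibility of $\det(X_{n-1}-t_2I_{n-1})$ at $\mathfrak n$, then prove $v_n$ is a nonzerodivisor, and finally invoke Lemma \ref{lemma: A/t CM to A CM} together with the equidimensionality from Lemma \ref{dim(R_n)}. Where you differ is in the route to the nonzerodivisor claim. The paper stays purely local: it maps $\mathrm{Spec}\,R_2(n)_{\mathfrak n}$ to the discrete valuation ring $\mathrm{Spec}\,\mathbb{K}[v_n]_{(v_n)}$ and then argues exactly as in the proof of Lemma \ref{Tilde(R)/(t-1)}, using the one-dimensional flatness criterion (G\"ortz--Wedhorn, Prop.~14.16): the special fiber is reduced by hypothesis, and it lies in the closure of the generic fiber because the local ring is equidimensional of dimension $n^2+n+3>n^2+n+2$. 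You instead spread out to an affine open $X\subset\mathrm{Spec}\,R_2(n)$ over $\mathbb{A}^1$, introduce a $\mathbb{G}_m$-action to get flatness away from the origin, and invoke the heavier Lemma \ref{lemma of flat}, which itself goes through the same DVR criterion. Both reach the same place, but the paper's local argument avoids all the globalization machinery.

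Your globalization, while repairable, does leave a genuine gap: for Lemma \ref{lemma:G_equivariant_implies_flat} to apply you need $X=\mathrm{Spec}\,R_2(n)[g^{-1},(h_1(t_1-t_2))^{-1},h^{-1}]$ to be $\mathbb{G}_m$-stable, which requires the localizer $h$ to be homogeneous for the $\mathbb{Z}$-grading induced by your $\mathbb{G}_m$-action (with $\deg v_i=1$, $\deg u_i=-1$, all else degree $0$); you chose $h$ only from openness of the reduced locus of $X\cap V(v_n)$ and never address this. The fix is available because $\mathfrak n$ is in fact a homogeneous ideal in this grading (all its listed generators, including $x_{nn}-1$ and $t_2-1$, sit in degree $0$ or $\pm1$), so the complement of the non-reduced locus is a homogeneous open set and one can choose $h$ homogeneous. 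Also note that in your phrasing the equality $\dim(X\cap V(v_n))=\dim X-1$ is not caused by "removing components on which $v_n$ vanishes" — that removal only guarantees $\pi$ is dominant; the dimension identity is then forced by Krull's principal ideal theorem together with the equidimensionality of $X$. These points are all addressable, but they are precisely the extra work that the paper's direct localization to the DVR sidesteps.
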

\begin{proof}
As matrices over $R_2(n)_{\mathfrak{n}}/(v_n)$, we have $(v_1,\dots,v_{n-1},0)(X_n-t_{2}I_n)=0$, then $v_i=0$ in $R_2(n)_{\mathfrak{n}}/(v_n)$ for $1\leq i\leq n$ since $\det(X_{n-1}-t_2I_{n-1})\notin\mathfrak{n}$. Thus $R_2(n)_{\mathfrak{n}}/(v_n)=R_2(n)_{\mathfrak{n}}/(v_1,\dots,v_n)$. 
Since  $R_2(n)_{\mathfrak{n}}/(v_1,\dots,v_n)$ is Cohen–Macaulay and reduced and has dimension $n^2+n+2$, so is $R_2(n)_{\mathfrak{n}}/(v_n)$.
By Lemma \ref{dim(R_n)}, the local ring $R_2(n)_{\mathfrak{n}}$ is equidimensional of dimension $n^2+n+3$.
Let 
\[
\psi':\mathrm{Spec}R_2(n)_{\mathfrak{n}}\to \mathrm{Spec}\mathbb{K}[v_n]_{(v_n)}
\]
 be the natural map. 
 Similar to the proof of Lemma \ref{Tilde(R)/(t-1)}, we can see that $\psi'$ is flat and $R_2(n)_{\mathfrak{n}}$ is Cohen–Macaulay.
\end{proof}

Finally, we will prove that 
$\Tilde{R}(n-1)/(w_{n-1},t)$ is Cohen–Macaulay.

\begin{lemma}\label{proof of Tilde(R)/(vu,t) CM}
If $R_2(n)_{\mathfrak{n}}$ is Cohen–Macaulay of dimension $n^2+n+3$, then The ring $\Tilde{R}(n-1)/(w_{n-1},t)$ is Cohen–Macaulay.
\end{lemma}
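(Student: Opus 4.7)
The plan is to exploit the explicit isomorphism flagged in Section \ref{sec:main_result_and_strategy}, namely
\[
R_2(n)_{\mathfrak{q}}/(v_i,x_{in},x_{nn}-1,v_n'\mid 1\leq i\leq n-1)\;\cong\; \bigl(\Tilde{R}(n-1)/(w_{n-1},t)\bigr)_{\mathfrak{q}'}\otimes_{\mathbb{K}}\mathbb{K}(v_n),
\]
and then to transfer Cohen-Macaulayness from the left-hand side (where it comes from the hypothesis on $R_2(n)_{\mathfrak{n}}$) through the isomorphism, and finally upward to the graded ring $\Tilde{R}(n-1)/(w_{n-1},t)$.

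First I will write down the isomorphism explicitly, in the spirit of the map constructed in the proof of Lemma \ref{dim(R_n)}: after imposing $x_{in}=0$ for $i<n$, $x_{nn}=1$, $v_i=0$ for $i<n$, and $v_n'=0$, the matrix $X_n$ becomes block diagonal with blocks $X_{n-1}$ and $1$, and the relation $(v_1,\dots,v_n)(X_n-t_2I_n)=0$ reduces to $v_n(1-t_2)=0$. Since $v_n\notin \mathfrak{q}$ it is a unit in the localization, forcing $t_2=1$. The remaining defining relations on $X_{n-1},Y_{n-1},u_i,v_i',t_1,t_3$ collapse precisely onto those of $\Tilde{R}(n-1)/(w_{n-1},t)$, while $v_n$ remains a free transcendental, accounting for the factor $\mathbb{K}(v_n)$.

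Next I analyze the left-hand side. Because $\mathfrak{q}\subseteq \mathfrak{n}$, the ring $R_2(n)_{\mathfrak{q}}$ is a further localization of the Cohen-Macaulay local ring $R_2(n)_{\mathfrak{n}}$, and since $v_n\notin \mathfrak{q}$ a dimension comparison gives $\dim R_2(n)_{\mathfrak{q}}=\dim R_2(n)_{\mathfrak{n}}-1=n^2+n+2$. The quotient in the isomorphism is by $2n$ elements, and the right-hand side has Krull dimension $n^2-n+2$ (using \Cref{lemma: dim(Tilde(R)/(vu,t))} together with the fact that a field extension preserves Krull dimension). Since $\dim R_2(n)_{\mathfrak{q}}-2n = n^2-n+2$, the $2n$ elements must form a regular sequence in the Cohen-Macaulay local ring $R_2(n)_{\mathfrak{q}}$, and therefore the quotient---hence also the right-hand side of the isomorphism---is Cohen-Macaulay.

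Finally, because $\mathbb{K}\hookrightarrow \mathbb{K}(v_n)$ is faithfully flat, Cohen-Macaulayness descends along the base change, giving that $\bigl(\Tilde{R}(n-1)/(w_{n-1},t)\bigr)_{\mathfrak{q}'}$ is Cohen-Macaulay. Since $\Tilde{R}(n-1)/(w_{n-1},t)$ is a finitely generated graded $\mathbb{K}$-algebra whose irrelevant maximal ideal is exactly $\mathfrak{q}'$, \Cref{lemma: graded CM} promotes this to Cohen-Macaulayness of the whole ring $\Tilde{R}(n-1)/(w_{n-1},t)$. The main technical obstacle will be the first step: establishing the exact isomorphism and identifying the $2n$ elements as a regular sequence; once these are in hand, the dimension arithmetic and the graded-local descent follow the pattern already used repeatedly in Sections \ref{result of GL} and \ref{proof of lemma Tilde(R)/(vu,t) CM}.
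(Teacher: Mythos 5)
Your proof follows the same route as the paper: localize the hypothesis from $\mathfrak{n}$ to the sub-prime $\mathfrak{q}$, invoke the isomorphism of the quotient by the $2n$ elements with $\bigl(\Tilde{R}(n-1)/(w_{n-1},t)\bigr)_{\mathfrak{q}'}\otimes_{\mathbb{K}}\mathbb{K}(v_n)$, use the dimension-drop criterion in a Cohen--Macaulay local ring to conclude those $2n$ elements form a regular sequence, descend Cohen--Macaulayness along the faithfully flat $\mathbb{K}\to\mathbb{K}(v_n)$, and finally globalize with \Cref{lemma: graded CM}. You are simply more explicit than the paper about the $\dim R_2(n)_{\mathfrak{q}}=n^2+n+2$ computation and the faithfully flat descent step, both of which the paper leaves implicit.
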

\begin{proof}
    Let $\mathfrak{q}$ be the ideal of $R_2(n)$ generated by all of the entries of the matrices 
$X_{n}-\mathrm{diag}(0,\dots,0,1),Y_{n}$ and $u_i,v_j,v_i',t_1,t_3,t_2-1$ for $1\leq i\leq n$ and $1\leq j\leq n-1$.
Thus $\mathfrak{q}$ is a prime ideal of $R_2(n)$ and $\mathfrak{q}\subseteq\mathfrak{n}$. 
Since $R_2(n)_{\mathfrak{n}}$ is Cohen–Macaulay of dimension $n^2+n+3$, the local ring $R_2(n)_{\mathfrak{q}}$ is Cohen–Macaulay and has dimension $n^2+n+2$.
Similar to the proof of Lemma \ref{dim(R_n)}, we see that 
 \[
  R_2(n)_{\mathfrak{q}}/(v_i,x_{in},x_{nn}-1,v_n'\mid 1\leq i\leq  n-1)\cong\left(\Tilde{R}(n-1)/(w_{n-1},t)\right)_{\mathfrak{q}'}\otimes_{\mathbb{K}}\mathbb{K}(v_n),
 \]
 where $\mathfrak{q}'=(x_{ij},y_{ij},u_i,v_i,t_1,t_2,\mid 1\leq i,j\leq n-1)\subseteq \Tilde{R}(n-1)/(w_{n-1},t)$.
Since $\dim\left(\Tilde{R}(n-1)/(w_{n-1},t)\right)_{\mathfrak{q}'}=n^2-n+2=\dim(R_2(n)_{\mathfrak{q}})-2n$ by Lemma \ref{lemma: dim(Tilde(R)/(vu,t))}, we see that 
 $\left(\Tilde{R}(n-1)/(w_{n-1},t)\right)_{\mathfrak{q}'}$ is Cohen–Macaulay.
Because $\Tilde{R}(n-1)/(w_{n-1},t)$ is a finitely generated graded $\mathbb{K}$-algebra with the irrelevant ideal $\mathfrak{q}'$, the ring $\Tilde{R}(n-1)/(w_{n-1},t)$ is Cohen–Macaulay by Lemma \ref{lemma: graded CM}.
\end{proof}
 
This completes the proof of Lemma \ref{Tilde(R)/(vu,t) CM}.

\section{Proofs of Lemma \ref{lemma: dim(Tilde(R)/(vu,t))} and Lemma \ref{lemma: dim(Tilde(R)/(t))}}\label{proof of lemma dim(Tilde(R)/(vu,t)) and lemma dim(Tilde(R)/(t))}

We keep the same notations as in Section \ref{result of GL} and Section \ref{proof of lemma Tilde(R)/(vu,t) CM}.

\subsection{Proof of Lemma \ref{lemma: dim(Tilde(R)/(vu,t))}}\label{Proof of lemma: dim(Tilde(R)/(vu,t))}\ 

We divide our proof of Lemma \ref{lemma: dim(Tilde(R)/(vu,t))} in three steps.

First, we need to consider some irreducible subsets of dimension $m^2+m+2$.

For any $m\geq 1$, let $\mathcal{CV}(m)=\{(A,B,\alpha,\beta,a,b)\in M_{m}(\mathbb{K})^2\times M_{m\times 1}(\mathbb{K})\times M_{1\times m}(\mathbb{K})\times \mathbb{K}^2\mid AB=BA,A\alpha=a\alpha,\beta B=b\beta, \beta \alpha=0\}$.
 For any $i\geq 0$, let $\Lambda_i=\{(a_1,\dots,a_{i+1})\in \mathbb{K}^{i+1}\mid \forall k\neq j, a_k\neq a_j\}$. 
For $0\leq m_1,m_2\leq m$ and $m_1+m_2\leq m$, 
let 
\[
\psi_{m,m_1,m_2}:GL_m(\mathbb{K})\times \Lambda_{m-m_1} \times \Lambda_{m-m_2}\times\mathbb{K}^{m_1}\times\mathbb{K}^{m_2} \to \mathcal{CV}(m)
\]
\[
(g,a_0,\dots,a_{m-m_1},b_0,\dots,b_{m-m_2},\alpha, \beta )
\mapsto (g\ \mathrm{diag}(a_0 I_{m_1},a_1,\dots,a_{m-m_1})g^{-1},
\]\[
g\ \mathrm{diag}(b_{1},\dots,b_{m-m_2},
b_0I_{m_2})g^{-1},
g(\alpha,0)^t,(0, \beta )g^{-1},a_0,b_0)
\]
and 
\[
\mathcal{CV}(m)^o=\bigcup_{0\leq m_1,m_2,m_1+m_2\leq m}\mathrm{im}(\psi_{m,m_1,m_2}).
\]
Let $GL_{m}(\mathbb{K})$ act on $ \mathcal{CV}(m)$ according to the rule $g\cdot (A,B,\alpha, \beta ,a,b)=(gAg^{-1},gBg^{-1},g\alpha, \beta g^{-1},a,b)$ for any $(A,B,\alpha, \beta ,a,b)\in \mathcal{CV}(m)$.
Obviously the action of $GL_{m}( \mathbb{K})$ leaves $\mathcal{CV}(m)^o$ and $\overline{\mathcal{CV}(m)^o}$ stable.

\begin{lemma}\label{dim(im(psi))}
For $0\leq m_1,m_2\leq m$ and $m_1+m_2\leq m$, $\mathrm{im}(\psi_{m,m_1,m_2})$ is irreducible and has dimension $m^2+m+2$. 
\end{lemma}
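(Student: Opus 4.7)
The strategy is to establish irreducibility and compute the dimension of $\mathrm{im}(\psi_{m,m_1,m_2})$ via a fiber-dimension argument leveraging a natural torus action on the source.

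I would first observe that
\[
Y := GL_m(\mathbb{K}) \times \Lambda_{m-m_1} \times \Lambda_{m-m_2} \times \mathbb{K}^{m_1} \times \mathbb{K}^{m_2}
\]
is a nonempty open subvariety of an affine space, hence irreducible, of dimension
\[
\dim Y = m^2 + (m-m_1+1) + (m-m_2+1) + m_1 + m_2 = m^2 + 2m + 2.
\]
Since the image of an irreducible variety under a morphism is irreducible, $\mathrm{im}(\psi_{m,m_1,m_2})$ is irreducible.

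For the dimension, I would introduce the action of the diagonal torus $T = \mathbb{G}_m^m \subset GL_m$ on $Y$ given by
\[
h \cdot (g, \underline{a}, \underline{b}, \alpha, \beta) = \bigl(gh,\ \underline{a},\ \underline{b},\ \mathrm{diag}(h_1,\dots,h_{m_1})^{-1}\alpha,\ \beta\,\mathrm{diag}(h_{m-m_2+1},\dots,h_m)\bigr)
\]
for $h = \mathrm{diag}(h_1,\dots,h_m)$. Because $h$ commutes with both diagonal matrices appearing in the definition of $\psi_{m,m_1,m_2}$, and because the zero-padded vectors $(\alpha,0)^t$ and $(0,\beta)$ transform in compatible ways under left and right multiplication by $h$, the map $\psi_{m,m_1,m_2}$ is $T$-invariant. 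This $T$-action is free (right translation is free on $GL_m$), so every orbit has dimension $m$.

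The main step is to show that a generic fiber of $\psi_{m,m_1,m_2}$ is a \emph{finite} union of $T$-orbits, from which the fiber-dimension theorem yields $\dim \mathrm{im}(\psi_{m,m_1,m_2}) = \dim Y - m = m^2+m+2$. Given a very general image point $(A,B,\alpha',\beta',a,b)$: the scalars $a = a_0$ and $b = b_0$ are read off directly; the unordered multisets $\{a_1,\dots,a_{m-m_1}\}$ and $\{b_1,\dots,b_{m-m_2}\}$ are determined by the non-$a$ (resp.\ non-$b$) eigenvalues of $A$ (resp.\ $B$), so only finitely many orderings occur; for each fixed ordering $g$ is determined up to right multiplication by the common centralizer in $GL_m$ of the two diagonal matrices $\mathrm{diag}(a_0 I_{m_1}, a_1,\dots,a_{m-m_1})$ and $\mathrm{diag}(b_1,\dots,b_{m-m_2}, b_0 I_{m_2})$; and once $g$ is fixed, $\alpha$ and $\beta$ are uniquely recovered from $\alpha'$ and $\beta'$. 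The crux of the argument---and what I expect to be the main obstacle---is verifying that this common centralizer is precisely $T$. Here the assumption $m_1+m_2 \le m$ enters essentially: the two partitions of $\{1,\dots,m\}$ defining the block structures of the two centralizers are $(m_1;1,\dots,1)$ and $(1,\dots,1;m_2)$, and the condition $m_1+m_2 \le m$ guarantees that their common refinement is the all-singletons partition, forcing the common centralizer to equal the diagonal torus $T$.
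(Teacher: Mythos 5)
Your proposal is correct and uses essentially the same argument as the paper: both establish irreducibility by noting the source is irreducible, and both compute the dimension by showing the generic (in the paper, every) fiber of $\psi_{m,m_1,m_2}$ is $m$-dimensional — a finite union of cosets/orbits of the diagonal torus, where the paper phrases this as $g'^{-1}g$ being a permutation times a diagonal matrix and you phrase it via a free $T$-action under which $\psi_{m,m_1,m_2}$ is invariant. The key use of $m_1+m_2\le m$ to pin the common centralizer down to $T$ matches the paper's reasoning.
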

\begin{proof}
    Since $GL_m(\mathbb{K})$ and $ \Lambda_{i}$ are irreducible, we see that $\mathrm{im}(\psi_{m,m_1,m_2})$ is irreducible.
    For any $(g,a_0,\dots,a_{m-m_1},b_0,\dots,b_{m-m_2},\alpha, \beta )\in GL_m(\mathbb{K})\times \Lambda_{m-m_1} \times \Lambda_{m-m_2}\times\mathbb{K}^{m_1}\times\mathbb{K}^{m_2}$, then $
(g',a_0',\dots,a_{m-m_1}',b_0',\dots,b_{m-m_2}',\alpha',
 \beta')\in \psi_{m,m_1,m_2}^{-1}(\psi_{m,m_1,m_2}(g,a_0,\dots,a_{m-m_1},b_0,\dots,b_{m-m_2},\alpha, \beta ))$
if and only if
\[
\mathrm{diag}(a_0'I_{m_1},a'_1,\dots,a'_{m-m_1})=g'^{-1}g\ \mathrm{diag}(a_0I_{m_1},a_1,\dots,a_{m-m_1})g^{-1}g',
\]
\[
\mathrm{diag}(b_1',\dots,b'_{m-m_2},b_0' I_{m_2})=
g'^{-1}g\ \mathrm{diag}(b_1,\dots,b_{m-m_2},b_0I_{m_2})g^{-1}g',
\]
\[
a_0'=a_0,b_0'=b_0,(\alpha',0)^t=g'^{-1}g(\alpha,0)^t,(0, \beta')=(0, \beta )g^{-1}g',
\]
\[
g'^{-1}g=Q\ \mathrm{diag}(d_1,\dots,d_m),d_1,\dots,d_m\in \mathbb{K}\setminus\{0\},
\]
where $Q$ is a permutation matrix.
Thus
\[
\dim\psi_{m,m_1,m_2}^{-1}(\psi_{m,m_1,m_2}(g,a_0,\dots,a_{m-m_1},b_0,\dots,b_{m-m_2},\alpha, \beta ))=m.
\]
Hence
\begin{align*}
&\dim(\mathrm{im}(\psi_{m,m_1,m_2}))\\
&=m^2+(m-m_1+1)+(m-m_2+1)+m_1+m_2-m\\
&=m^2+m+2.
\end{align*}
\end{proof}

Next, we will prove that $\mathcal{CV}(m)^o$ is dense in $\mathcal{CV}(m)$, thus we can obtain all irreducible components of $\mathrm{Spec}\Tilde{R}(m)/(w_{m},t)$. 
We will prove it by induction on $m$.  The proof is clear for $m=1$. We assume $m\geq 2$ and make the inductive hypothesis that if $0<r<m$, then $\overline{\mathcal{CV}(r)^o}=\mathcal{CV}(r)$. Let $\mathcal{CV}(m)_1$ be the set of $(A_1,A_2,\alpha,\beta,a,b)\in \mathcal{CV}(m)$ such that $A_i$ has a single eigenvalue with an eigenspace of dimension $\leq \frac{m}{2}$ for $i=1,2$.
For $(A,B,\alpha,\beta,a,b)\in \mathcal{CV}(m)$, we will prove that $(A,B,\alpha,\beta,a,b)\in \overline{\mathcal{CV}(m)^o}$ in following cases:
\begin{enumerate}
\item If $A$ or $B$ has at least two different eigenvalues, then $(A,B,\alpha,\beta,a,b)\in \overline{\mathcal{CV}(m)^o}$. (See Lemma \ref{closure case 1: two different eigenvalues}.)
\item If $m=2$, then $(A,B,\alpha,\beta,a,b)\in \overline{\mathcal{CV}(m)^o}$. (See Lemma \ref{closure of CV(2)^o}.)
\item If there is $g\in GL_m(\mathbb{K})$ such that
\[
gAg^{-1}=\begin{pmatrix}
    a'I_r& A_2\\
    0&A_4
\end{pmatrix},gBg^{-1}=\begin{pmatrix}
    B_1& B_2\\
    0&B_4
\end{pmatrix},g^{-1}\beta=(0,\beta_1),
\]
where $B_1\in M_{r}(\mathbb{K}),\beta_1 \in M_{1\times (m-r)}(\mathbb{K})$, $\mathrm{rank}(A_2)<r,0<r<m$, then $(A,B,\alpha,\beta,a,b)\in \overline{\mathcal{CV}(m)^o}$. (See Lemma \ref{closure case 3: rank(A_2)<r}.)
\item If there is $g\in GL_m(\mathbb{K})$ such that
\[
gAg^{-1}=\begin{pmatrix}
    a'I_r& A_2\\
    0&a'I_{m-r}
\end{pmatrix},gBg^{-1}=\begin{pmatrix}
    B_1& B_2\\
    0&B_4
\end{pmatrix},g^{-1}\beta=(0,\beta_1),
\]
where $B_1\in M_{r}(\mathbb{K}),\beta_1 \in M_{1\times (m-r)}(\mathbb{K})$, $\mathrm{rank}(A_2)<m-r,0<r<m$, then $(A,B,\alpha,\beta,a,b)\in \overline{\mathcal{CV}(m)^o}$. (See Lemma \ref{closure case 4: A_4=a'}.)
\item If $m\geq 3$ and $(A,B,\alpha,\beta,a,b)\in\mathcal{CV}(m)\setminus\mathcal{CV}(m)_1$, then $(A,B,\alpha,\beta,a,b)\in \overline{\mathcal{CV}(m)^o}$ by case $(1),(3),(4)$. (See Lemma \ref{CV^o dense}.)
\item If $m\geq 3$ and $(A,B,\alpha,\beta,a,b)\in\mathcal{CV}(m)_1$, then we will show that $\dim(\mathcal{CV}(m)_1)\leq m^2+m$ and $\overline{\mathcal{CV}(m)\setminus \mathcal{CV}(m)_1}=\mathcal{CV}(m)$ (see Lemma \ref{closure case 2: dim(eigenspace)<=m/2}), thus $(A,B,\alpha,\beta,a,b)\in \overline{\mathcal{CV}(m)^o}$ by case $(5)$.
\end{enumerate}

\begin{lemma}\label{closure case 1: two different eigenvalues}
Let $(A,B,\alpha, \beta ,a,b)\in \mathcal{CV}(m)$ such that  $A$ or $B$ has at least two different eigenvalues.
 If $
\overline{\mathcal{CV}(m')^o}=\mathcal{CV}(m')$ for any $m'<m$, then $
(A,B,\alpha, \beta ,a,b)\in \overline{\mathcal{CV}(m)^o}$.
\end{lemma}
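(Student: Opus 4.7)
The plan is to decompose the underlying space into two proper $A$-invariant (hence $B$-invariant) blocks determined by the spectrum of $A$, invoke the inductive hypothesis on each block, and reassemble approximations into $\mathcal{CV}(m)^o$. Without loss of generality, assume $A$ has two distinct eigenvalues; the symmetric involution $(A,B,\alpha,\beta,a,b)\mapsto(B^t,A^t,\beta^t,\alpha^t,b,a)$ together with reversal of basis vectors preserves both $\mathcal{CV}(m)$ and $\mathcal{CV}(m)^o$, so the case in which $B$ has two distinct eigenvalues is handled identically.

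Using the generalized eigenspace decomposition of $A$ and the $GL_m(\mathbb{K})$-invariance of $\mathcal{CV}(m)^o$, we may take
\[
A=\mathrm{diag}(A_1,A_2),\qquad B=\mathrm{diag}(B_1,B_2),\qquad A_i\in M_{m_i}(\mathbb{K}),
\]
with $0<m_i<m$, $m_1+m_2=m$, and $\mathrm{spec}(A_1)\cap\mathrm{spec}(A_2)=\varnothing$; we arrange that $a\in\mathrm{spec}(A_1)$ whenever $a\in\mathrm{spec}(A)$. Writing $\alpha=(\alpha_1,\alpha_2)^t$ and $\beta=(\beta_1,\beta_2)$, invertibility of $A_2-aI_{m_2}$ forces $\alpha_2=0$. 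The two smaller tuples $(A_1,B_1,\alpha_1,\beta_1,a,b)\in\mathcal{CV}(m_1)$ and $(A_2,B_2,0,\beta_2,a,b)\in\mathcal{CV}(m_2)$ are then well-defined, the constraint $\beta_1\alpha_1=\beta\alpha=0$ being inherited.

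By the inductive hypothesis, each of these lies in $\overline{\mathcal{CV}(m_i)^o}$, so we pick sequences $(A_i^{(n)},B_i^{(n)},\alpha_i^{(n)},\beta_i^{(n)},a^{(n)},b^{(n)})\in\mathcal{CV}(m_i)^o$ approximating them. These two sequences can be synchronized to share a common pair $(a^{(n)},b^{(n)})\to(a,b)$, since the distinguished repeated eigenvalue in the structure of $\mathcal{CV}(m_i)^o$ may be freely adjusted near $a$ (resp. $b$) without affecting convergence to $(A_i,B_i,\alpha_i,\beta_i,a,b)$. Setting
\[
A^{(n)}=\mathrm{diag}(A_1^{(n)},A_2^{(n)}),\ B^{(n)}=\mathrm{diag}(B_1^{(n)},B_2^{(n)}),\ \alpha^{(n)}=(\alpha_1^{(n)},0)^t,\ \beta^{(n)}=(\beta_1^{(n)},\beta_2^{(n)}),
\]
produces a tuple in $\mathcal{CV}(m)$ converging to $(A,B,\alpha,\beta,a,b)$.

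To promote this to membership in $\mathcal{CV}(m)^o$, we perturb generically the simple (non-repeated) eigenvalues of $A_i^{(n)}$ and $B_i^{(n)}$ so that, across both blocks, all non-$a^{(n)}$ eigenvalues of $A^{(n)}$ become pairwise distinct, and similarly for $B^{(n)}$; the perturbation stays inside $\mathcal{CV}(m_i)^o$ and preserves the limit. Since the $a^{(n)}$- and $b^{(n)}$-eigenspaces within each block are disjoint, their combined multiplicities satisfy $m_1'+m_2'\le m_1+m_2=m$, which is exactly the constraint required by $\psi_{m,m_1',m_2'}$. A permutation of basis vectors gathering the combined $a^{(n)}$-eigenspace at the front and $b^{(n)}$-eigenspace at the back then exhibits the glued tuple in the image of $\psi_{m,m_1',m_2'}$. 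The main technical obstacle is the bookkeeping in this final step: verifying that the $m_1'+m_2'\le m$ constraint is automatically preserved by gluing, and that eigenvalue perturbations do not break simultaneous diagonalizability nor the eigenspace constraints on $\alpha^{(n)}$ and $\beta^{(n)}$.
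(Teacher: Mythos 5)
Your proposal follows the same strategy as the paper's proof: split via the generalized eigenspace decomposition of $A$, observe that $B$ must respect the splitting and that $\alpha_2=0$, invoke the inductive hypothesis on each block, and reassemble. The conclusion and intermediate steps are the right ones, and the extra care you take (the explicit symmetry $(A,B,\alpha,\beta,a,b)\mapsto(B^t,A^t,\beta^t,\alpha^t,b,a)$ for the case where $B$ has two eigenvalues, and the justification that $\alpha_2=0$ once $a\in\mathrm{spec}(A_1)$) is a welcome clarification that the paper glosses over.

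The one place where your write-up falls short of a proof is the use of \emph{sequences}. Over a general algebraically closed field the Zariski topology is not first-countable, so ``pick sequences approximating them'' and ``converging to $(A,B,\alpha,\beta,a,b)$'' do not have the intended meaning. The paper avoids this by packaging the gluing as a morphism
\begin{equation*}
F\colon\mathcal{CV}(m')\times\mathcal{CV}(m-m')\longrightarrow\mathcal{CV}(m),
\end{equation*}
which sends $(A',B',\alpha',\beta',a',b',A'',B'',\alpha'',\beta'',a'',b'')$ to $(\mathrm{diag}(A',A''-(a''-a')I_{m-m'}),\mathrm{diag}(B',B''-(b''-b')I_{m-m'}),(\alpha',\alpha'')^t,(\beta',\beta''),a',b')$, and then arguing that $F$ carries a nonempty open subset of each product $\mathrm{im}(\psi_{m',m_1,m_2})\times\mathrm{im}(\psi_{m-m',m'_1,m'_2})$ into $\mathcal{CV}(m)^o$; the built-in shift $A''\mapsto A''-(a''-a')I$ is precisely the ``synchronization'' of the distinguished eigenvalues you assert informally, made explicit. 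Density of $\mathcal{CV}(m')^o\times\mathcal{CV}(m-m')^o$ in the source (the inductive hypothesis) plus continuity then gives $\mathrm{im}(F)\subseteq\overline{\mathcal{CV}(m)^o}$, and the original tuple visibly lies in $\mathrm{im}(F)$. Your perturbation step and the observation that $m'_1+m'_2\le m$ is automatically satisfied are both correct, and correspond exactly to the paper's choice of the open set $U$ and the dimension bookkeeping in $\psi_{m,\cdot,\cdot}$. To make the argument watertight, replace the sequence language with these dense-image and continuity arguments and make the eigenvalue shift explicit in the gluing map.
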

\begin{proof} 
We may assume that $A$ has at least two different eigenvalues and let
\[
A=\begin{pmatrix} 
A_1& 0\\  0&A_4
\end{pmatrix},B=\begin{pmatrix} 
B_1& B_2\\ B_3&B_4
\end{pmatrix}\in M_{m}(\mathbb{K}),\alpha=\begin{pmatrix} \alpha_1\\ 0\end{pmatrix}\in M_{m\times 1}(\mathbb{K}),
\]
and $ \beta =( \beta_1, \beta_2)\in M_{1\times m}(\mathbb{K})$, such that $A_1,B_1\in M_{m'}(\mathbb{K}),\alpha_1\in M_{m'\times 1}(\mathbb{K})$, $ \beta_1 \in M_{1\times m'}(\mathbb{K}), \beta_2 \in M_{1\times (m-m')}(\mathbb{K})$ where $0<m'<m$, and $A_1$ and $A_4$ have no common eigenvalues.
Since $AB=BA$, it follows that $A_iB_i=B_iA_i$ for $i=1,4$ and $A_1B_2=B_2A_4$ and $A_4B_3=B_3A_1$, therefore $B_2=0$ and $B_3=0$ since $A_1$ and $A_4$ have no common eigenvalues.
Since $A\alpha=a\alpha$ and $\beta B=b \beta $, we have $A_1\alpha_1=a\alpha_1, \beta_1B_1=b\beta_1,\beta_2B_4=b\beta_2,\beta_1\alpha_1=0$, thus $(A_1,B_1,\alpha_1,\beta_1,a,b)\in \mathcal{CV}(m')$ and $(A_4,B_4,0,\beta_2,a,b)\in \mathcal{CV}(m-m')$.

We have a morphsim
\[
F:\mathcal{CV}(m')\times \mathcal{CV}(m-m')\to \mathcal{CV}(m),
\]
\[
(A',B',\alpha',\beta',a',b',A'',B'',\alpha'',\beta'',a'',b'')\mapsto (\mathrm{diag}(A',A''-(a''-a')I_{m-m'}),
\]
\[
\mathrm{diag}(B',B''-(b''-b')I_{m-m'}),(\alpha',\alpha'')^t,(\beta',\beta''),a',b').
\]
For $0\leq m_1,m_2\leq m',0\leq m_1',m_2'\leq m-m',$ and $m_1+m_2\leq m',m_1'+m_2'\leq m-m'$, 
it follows from Lemma \ref{dim(im(psi))} that $\mathrm{im}(\psi_{m',m_1,m_2})$ and $\mathrm{im}(\psi_{m-m',m_1',m_2'})$ are irreducible, thus $\mathrm{im}(\psi_{m',m_1,m_2})\times\mathrm{im}(\psi_{m-m',m_1',m_2'})$ is irreducible.
Let $U$ be the set of $(A_1',A_2',\alpha',\beta',a_1',a_2',A_1'',A_2'',\alpha'',\beta'',a_1'',a_2'')\in\mathrm{im}(\psi_{m',m_1,m_2})\times\mathrm{im}(\psi_{m-m',m_1',m_2'})$ such that 
\[
\{\text{eigenvalues of }A_i'\}\cap \{\text{eigenvalues of }A_i''-(a_i''-a_i')I_{m-m'}\}\subseteq \{a_i'\},i=1,2.
\]
Thus $F(U)\subseteq \mathcal{CV}(m)^o$.
Note that $U\neq \emptyset$ is open in $\mathrm{im}(\psi_{m',m_1,m_2})\times\mathrm{im}(\psi_{m-m',m_1',m_2'})$.
Thus $F(\mathrm{im}(\psi_{m',m_1,m_2})\times\mathrm{im}(\psi_{m-m',m_1',m_2'}))\subseteq \overline{\mathcal{CV}(m)^o}$.
So $F(\mathcal{CV}(m')^o\times \mathcal{CV}(m-m')^o)\subseteq \overline{\mathcal{CV}(m)^o}$.
Clearly $(A,B,\alpha,\beta,a,b)\in \mathrm{im}(F) \subseteq  \overline{F(\mathcal{CV}(m')^o\times \mathcal{CV}(m-m')^o)}\subseteq \overline{\mathcal{CV}(m)^o}$.

\end{proof}

\begin{lemma}\label{closure case 2: dim(eigenspace)<=m/2}
Let $m\geq 3$ and $\mathcal{CV}(m)_1$ be the set of $(A_1,A_2,\alpha,\beta,a,b)\in \mathcal{CV}(m)$ such that $A_i$ has a single eigenvalue with an eigenspace of dimension $\leq \frac{m}{2}$ for $i=1,2$.
 Then $\dim(\mathcal{CV}(m)_1)\leq m^2+m$ and $\overline{\mathcal{CV}(m)\setminus \mathcal{CV}(m)_1}=\mathcal{CV}(m)$.
\end{lemma}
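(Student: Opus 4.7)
The plan is to prove this lemma in two parts: first establish the dimension estimate $\dim\mathcal{CV}(m)_{1}\le m^{2}+m$ by a Jordan-type stratification, and then deduce the density statement $\overline{\mathcal{CV}(m)\setminus \mathcal{CV}(m)_{1}}=\mathcal{CV}(m)$ by exhibiting an explicit deformation at every point of $\mathcal{CV}(m)_{1}$.

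For the dimension bound, I would stratify $\mathcal{CV}(m)_{1}$ by the Jordan types $(\lambda,\mu)$ of the nilpotent matrices $N_{A}:=A-aI$ and $N_{B}:=B-bI$, writing $k_{A}=\lambda_{1}'=\dim\ker N_{A}$ and $k_{B}=\mu_{1}'=\dim\ker N_{B}$ for the numbers of Jordan blocks, which are constrained by $k_{A},k_{B}\le m/2$. On each stratum, the commuting-pair locus $\{(N_{A},N_{B}):[N_{A},N_{B}]=0,\ N_{A}\text{ of type }\lambda,\ N_{B}\text{ nilpotent}\}$ has dimension equal to the orbit dimension $m^{2}-\sum_{i}(\lambda_{i}')^{2}$ of $N_{A}$ plus the nilpotent-centralizer dimension $\sum_{i}(\lambda_{i}')^{2}-k_{A}$ of $Z(N_{A})\cap\mathcal{N}$, which totals $m^{2}-k_{A}$; by symmetry it is also at most $m^{2}-k_{B}$, hence at most $m^{2}-\max(k_{A},k_{B})$. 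Adding $k_{A}$ for $\alpha\in\ker N_{A}$, $k_{B}$ for $\beta\in\ker N_{B}^{t}$, and $2$ for the scalars $(a,b)$, while subtracting at most $1$ from $\beta\alpha=0$, each stratum has dimension at most $m^{2}+\min(k_{A},k_{B})+2\le m^{2}+m/2+2$. This is $\le m^{2}+m$ for $m\ge 4$; for $m=3$ the only admissible case is $k_{A}=k_{B}=1$, which gives exactly $m^{2}+3=m^{2}+m$. Taking the union over finitely many Jordan-type pairs yields the dimension bound.

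For the density statement I would show that no irreducible component of $\mathcal{CV}(m)$ is contained in $\mathcal{CV}(m)_{1}$, by constructing for any $p=(A,B,\alpha,\beta,a,b)\in\mathcal{CV}(m)_{1}$ a one-parameter family $\phi:\mathbb{A}^{1}\to\mathcal{CV}(m)$ with $\phi(0)=p$ and $\phi(s)\notin\mathcal{CV}(m)_{1}$ for $s\ne 0$. After placing $N_{A}$ in Jordan normal form via the $GL_{m}$-action, set $A(s):=A+sD$ with $D$ a diagonal matrix (in the Jordan basis) having pairwise distinct entries $d_{1},\ldots,d_{m}$; then $N_{A}+sD$ is upper triangular with diagonal $sd_{1},\ldots,sd_{m}$, so $A(s)$ has $m$ distinct eigenvalues $a+sd_{i}$ for $s\ne 0$ and therefore violates the single-eigenvalue condition. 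When $N_{A}$ is regular nilpotent we have $Z(A)=\mathbb{K}[A]$, $B=q(A)$ for some polynomial $q$, and one checks directly that $(A(s),q(A(s)),\alpha,\beta,a+sd_{1},q(a+sd_{m}))$ lies in $\mathcal{CV}(m)$ and equals $p$ at $s=0$, using that in the Jordan basis $\alpha\in\mathbb{K}e_{1}$, $\beta\in\mathbb{K}e_{m}^{*}$, and $\beta\alpha=0$ automatically for $m\ge 2$.

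The main obstacle is the non-regular case, in which $Z(A)\supsetneq\mathbb{K}[A]$ and $B$ need not be a polynomial in $A$, so the family $B(s)=q(A(s))$ is unavailable. To handle this I would invoke the irreducibility of the commuting variety $\{(A,B):[A,B]=0\}$ due to Motzkin--Taussky and Gerstenhaber, together with the density in it of the locus where $A$ has $m$ distinct eigenvalues, to produce a family $(A(s),B(s))$ of commuting pairs with $A(s)$ regular semisimple for $s\ne 0$ and $(A(0),B(0))=(A,B)$; the data $(\alpha(s),\beta(s),a(s),b(s))$ are then extended stratum by stratum using the explicit parametrization of $\mathcal{CV}(m)^{o}$ via $\psi_{m,m_{1},m_{2}}$. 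The most delicate point is preserving $\beta(s)\alpha(s)=0$, which by the Fredholm alternative applied to $\mathrm{ad}_{N_{A}}$ (whose image is the trace-orthogonal complement of $Z(N_{A})$) reduces to a solvable linear consistency condition that holds because $[N_{A},N_{B}]=0$.
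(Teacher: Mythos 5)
Your proof splits into two quite different halves, and they fare very differently.

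\textbf{Dimension bound.} Your Jordan-type stratification is a legitimate alternative to the paper's approach. The paper instead maps $\mathcal{CV}(m)_1$ to the set $\mathcal{N}$ of commuting pairs each having a single eigenvalue, cites Premet's theorem (\cite[Theorem 3.7]{Premet}) for $\dim\mathcal{N}=m^2+1$ (since $\mathcal{N}\cong\mathcal{N}'\times\mathbb{K}^2$ with $\mathcal{N}'$ the nilpotent commuting variety of dimension $m^2-1$), and then bounds fiber dimensions coarsely by splitting $\mathcal{N}$ into the locus with both eigenspaces one-dimensional and its complement. Your route avoids Premet's theorem but trades it for the identity $\dim(Z(N_A)\cap\mathcal{N})=\sum_i(\lambda_i')^2-k_A$, which you assert without proof; it is in fact correct (it follows because $Z(N_A)$ modulo its Jacobson radical is $\prod_j M_{n_j}$ and nilpotency in $Z(N_A)$ is detected there), but you would need to justify it, and at that point you are doing roughly as much work as the citation. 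Both methods give $\dim\mathcal{CV}(m)_1\le m^2+m$, and your boundary check at $m=3$ is right. So this half is acceptable as an alternative, with a small gap to fill on the centralizer nilpotent-cone dimension.

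\textbf{Density.} This half has a genuine gap and also misses the much simpler argument that actually closes the lemma. The paper derives density in one line: $\mathcal{CV}(m)$ is cut out inside the irreducible scheme $\mathrm{Spec}\,R(m)\times\mathbb{A}^{2m+2}$ of dimension $m^2+3m+2$ by the $2m+1$ equations coming from $A\alpha=a\alpha$, $\beta B=b\beta$, $\beta\alpha=0$, so by Krull's height theorem every irreducible component of $\mathcal{CV}(m)$ has dimension $\ge m^2+m+1$, which strictly exceeds the bound $m^2+m$ on $\dim\mathcal{CV}(m)_1$; hence no component can lie inside $\mathcal{CV}(m)_1$, and $\mathcal{CV}(m)\setminus\mathcal{CV}(m)_1$ is dense. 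Your approach of producing an explicit one-parameter degeneration through every point of $\mathcal{CV}(m)_1$ is far stronger than what is needed, and you do not carry it out: you handle the regular nilpotent case, but in the non-regular case your plan is to ``invoke irreducibility of the commuting variety'' to get a curve $(A(s),B(s))$ and then ``extend $(\alpha(s),\beta(s),a(s),b(s))$ stratum by stratum,'' without explaining how the extension is constructed or why it stays inside $\mathcal{CV}(m)$. The irreducibility of $\mathrm{Spec}\,R(m)$ gives a curve through $(A,B)$ in the commuting variety, but it gives no control over lifting $\alpha,\beta$ along that curve; the Fredholm-alternative remark about $\mathrm{ad}_{N_A}$ does not address the nonlinear constraint $\beta(s)\alpha(s)=0$ for a one-parameter family, and the parametrizations $\psi_{m,m_1,m_2}$ only cover the locus where $A$ and $B$ are both diagonalizable, not the degenerate point $p$ you are trying to approach. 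As written this step is not a proof, and the simpler dimension-theoretic argument should replace it.
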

\begin{proof}
Let $\mathcal{N}$ be the set of $(A_1,A_2)\in M_m(\mathbb{K})^2$ such that $A_1A_2=A_2A_1$ and $A_i$ has a single eigenvalue for $i=1,2$, and let $\mathcal{N}'$ be the set of $(A_1,A_2)\in M_m(\mathbb{K})^2$ such that $A_1A_2=A_2A_1$ and $A_i$ is nilpotent for $i=1,2$.  
By \cite[Theorem 3.7]{Premet}, we see that $\mathcal{N}'$ is irreducible of dimension $m^2-1$. Hence $\mathcal{N}$ is irreducible of dimension $(m^2-1)+2=m^2+1$ since $\mathcal{N}'\times \mathbb{K}^2\cong\mathcal{N}$.

Let $\pi:\mathcal{CV}(m)_1\to \mathcal{N}$ be the natural map, and let $U$ be the set of $(A_1,A_2)\in \mathcal{N}$ such that the eigenspace of $A_i$ has dimension $1$ for $i=1,2$, and $V=\mathcal{N}\setminus U$. 
Because $\mathcal{N}$ is irreducible and $U\neq \emptyset$ is open in $\mathcal{N}$, we have 
$\dim(U)=m^2+1$ and $\dim(V)\leq m^2$. 
Since $m\geq 3$, we see that $\dim(\pi^{-1}(p))\leq 2$ for any $p\in U$ and $\dim(\pi^{-1}(p'))\leq \frac{m}{2}+\frac{m}{2}=m$ for any $p'\in V$, thus $\dim(\pi^{-1}(U))\leq \dim(U)+2\leq m^2+m$ and $\dim(\pi^{-1}(V))\leq \dim(V)+m\leq m^2+m$. Since $\mathcal{CV}(m)_1=\pi^{-1}(U)\cup\pi^{-1}(V)$, we have $\dim(\mathcal{CV}(m)_1)\leq m^2+m$.
By Lemma \ref{lemma:Motzkin-Taussky}, the scheme $\mathrm{Spec}R(m)$ is irreducible of dimension $m^2+m$, thus each irreducible component of $\mathcal{CV}(m)$ has dimension $\geq m^2+m+1$.
Hence $\overline{\mathcal{CV}(m)\setminus \mathcal{CV}(m)_1}=\mathcal{CV}(m)$.
\end{proof}

\begin{lemma}\label{closure case 3: rank(A_2)<r}
Let
\[
A=\begin{pmatrix}
    a'I_r& A_2\\
    0&A_4
\end{pmatrix},B=\begin{pmatrix}
    B_1& B_2\\
    0&B_4
\end{pmatrix},\beta=(0,\beta_1),
\]
where $B_1\in M_{r}(\mathbb{K}),\beta_1 \in M_{1\times (m-r)}(\mathbb{K})$, $\mathrm{rank}(A_2)<r,0<r<m$ and $
(A,B,\alpha,\beta,a,b)\in \mathcal{CV}(m)$.
If $
\overline{\mathcal{CV}(m')^o}=\mathcal{CV}(m')$ for any $m'<m$, then $
(A,B,\alpha,\beta,a,b)\in \overline{\mathcal{CV}(m)^o}$.
\end{lemma}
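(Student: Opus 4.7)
The plan is to use the rank hypothesis to normalize $A$, extract an $(m-1)$-dimensional sub-tuple, and then either invoke \Cref{closure case 1: two different eigenvalues} directly or lift an approximation furnished by the inductive hypothesis $\overline{\mathcal{CV}(m-1)^{o}}=\mathcal{CV}(m-1)$.

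Since $\mathrm{rank}(A_{2})<r$, the left kernel of $A_{2}$ contains a nonzero row vector, and I pick $P\in GL_{r}(\mathbb{K})$ whose first row lies in this kernel, so that the first row of $PA_{2}$ vanishes. Conjugating the tuple by $g=\mathrm{diag}(P,I_{m-r})\in GL_{m}(\mathbb{K})$ preserves both $\mathcal{CV}(m)^{o}$ and $\overline{\mathcal{CV}(m)^{o}}$, and leaves the hypothesis $\beta=(0,\beta_{1})$ intact. After this step $A$ has first row $(a',0,\dots,0)$ and first column $(a',0,\dots,0)^{t}$, so $A=(a')\oplus A''$ with $A''=\begin{pmatrix}a'I_{r-1} & A_{2}' \\ 0 & A_{4}\end{pmatrix}\in M_{m-1}(\mathbb{K})$. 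Writing the tuple in the $(1,m-1)$-block form
\[
A=\begin{pmatrix}a' & 0 \\ 0 & A''\end{pmatrix},\quad
B=\begin{pmatrix}b_{1} & \tilde b \\ B_{21}' & B''\end{pmatrix},\quad
\alpha=\begin{pmatrix}\alpha_{0} \\ \alpha''\end{pmatrix},\quad
\beta=(0,\beta''),
\]
and unpacking $AB=BA$, $A\alpha=a\alpha$, $\beta B=b\beta$, $\beta\alpha=0$, I read off: (i) $(A'',B'',\alpha'',\beta'',a,b)\in\mathcal{CV}(m-1)$; (ii) $\tilde b$ is a left $a'$-eigenvector and $B_{21}'$ is a right $a'$-eigenvector of $A''$; (iii) $(a'-a)\alpha_{0}=0$ and $\beta''B_{21}'=0$.

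If $A''$ has any eigenvalue different from $a'$ then $A$ itself has two distinct eigenvalues and \Cref{closure case 1: two different eigenvalues} finishes the proof, so I may assume $A''-a'I$ is nilpotent. By the inductive hypothesis I approximate $(A'',B'',\alpha'',\beta'',a,b)$ by a family $(A''(s),B''(s),\alpha''(s),\beta''(s),a(s),b(s))\in\mathcal{CV}(m-1)^{o}$ specializing to it at $s=0$, arranged so that $a'$ remains an eigenvalue of $A''(s)$ for every $s$ with an eigenline varying algebraically in $s$. Using this eigenline I choose left/right $a'$-eigenvectors $\tilde b(s),B_{21}'(s)$ of $A''(s)$ converging to $\tilde b,B_{21}'$, and lift to
\[
\left(\begin{pmatrix}a' & 0 \\ 0 & A''(s)\end{pmatrix},
\begin{pmatrix}b_{1} & \tilde b(s) \\ B_{21}'(s) & B''(s)\end{pmatrix},
\begin{pmatrix}\alpha_{0} \\ \alpha''(s)\end{pmatrix},
(0,\beta''(s)),a(s),b(s)\right)\in\mathcal{CV}(m).
\]
For generic $s$ the central eigenvalue of $A''(s)$ is distinct from $a'$, so the lifted $A$-component has two distinct eigenvalues and \Cref{closure case 1: two different eigenvalues} places the lifted tuple inside $\overline{\mathcal{CV}(m)^{o}}$. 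Letting $s\to 0$ and using that $\overline{\mathcal{CV}(m)^{o}}$ is closed yields $(A,B,\alpha,\beta,a,b)\in\overline{\mathcal{CV}(m)^{o}}$.

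The hard part is engineering this approximating family inside $\mathcal{CV}(m-1)^{o}$ so that $a'$ persists as an eigenvalue and the associated eigenvectors vary continuously. Concretely, I must choose the pair $(m_{1}'',m_{2}'')$ labelling the stratum $\mathrm{im}(\psi_{m-1,m_{1}'',m_{2}''})$ compatibly with whether $a'=a$, so that the $a'$-eigenspace of $A''(s)$ has the right dimension to degenerate to the $a'$-generalized eigenspace of $A''$. Boundary cases such as $\alpha''=0$ (where $a$ is unconstrained) or $\tilde b=B_{21}'=0$ (where the eigenvector conditions are vacuous) require only minor adjustments; otherwise continuity of the eigenspace decomposition along the stratum ensures the desired convergence of $\tilde b(s)$ and $B_{21}'(s)$.
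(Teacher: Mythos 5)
Your approach is genuinely different from the paper's, but it has a real gap in the lifting step that is not merely a matter of "minor adjustments."

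The paper never reduces dimension. It stays at rank $m$: since $\mathrm{rank}(A_2)<r$ there is a nonzero $\beta_2\in M_{1\times r}(\mathbb{K})$ with $\beta_2 A_2=0$; choosing $\alpha_2$ with $\beta_2\alpha_2\neq 0$, the rank-one perturbation
\[
B(c)=\begin{pmatrix}B_1+c\,\alpha_2\beta_2 & B_2\\ 0 & B_4\end{pmatrix}
\]
keeps $(A,B(c),\alpha,\beta,a,b)$ in $\mathcal{CV}(m)$ for every $c$ (the only nontrivial check, $[A,B(c)]=0$, holds precisely because $\beta_2 A_2=0$), while for $c\neq 0$ the trace of $B(c)$ moves away from $mb'$ although $\det(B(c)-b'I_m)=0$ still, so $B(c)$ has two distinct eigenvalues and \Cref{closure case 1: two different eigenvalues} applies. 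Letting $c\to 0$ finishes it. The rank hypothesis is used once, to kill the off-diagonal block in the commutator, and nothing else has to be controlled.

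Your reduction to $(A'',B'',\alpha'',\beta'',a,b)\in\mathcal{CV}(m-1)$ is correct up to the point where you invoke the inductive hypothesis, and that is where the argument stops being a proof. The inductive hypothesis only places $(A'',B'',\alpha'',\beta'',a,b)$ in the \emph{set} $\overline{\mathcal{CV}(m-1)^{o}}$; it supplies no control over \emph{how} an approximating family in $\mathcal{CV}(m-1)^{o}$ behaves. To lift to $\mathcal{CV}(m)$ you must simultaneously produce, for each parameter value $s$, a left $a'$-eigenvector $\tilde b(s)$ and a right $a'$-eigenvector $B_{21}'(s)$ of $A''(s)$ satisfying the extra linear constraint $\beta''(s)B_{21}'(s)=0$, all converging to the given $\tilde b,B_{21}'$. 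None of this is furnished by the induction. In fact on a stratum $\mathrm{im}(\psi_{m-1,m_1'',m_2''})$ the $a'$-eigenspace of $A''(s)$ has dimension exactly $m_1''$; if $m_1''=1$ the condition $\beta''(s)B_{21}'(s)=0$ generically forces $B_{21}'(s)=0$, so you cannot recover a nonzero $B_{21}'$ in the limit, and there is no reason $a'$ should be an eigenvalue of $A''(s)$ at all unless you also fix $a_0=a'$ in the parametrization, which is another choice you must justify can be made compatibly with the approximation. You flag this as "the hard part" and then assert it follows from "continuity of the eigenspace decomposition," but continuity of eigenspaces precisely fails at the degeneration you care about ($A''-a'I$ nilpotent is the boundary of the diagonalizable stratum), and the interaction with $\beta''(s)$ is an additional transversality statement you have not proved. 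As written, the proposal is a program, not a proof; the paper's direct perturbation of $B$ sidesteps all of these difficulties.
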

\begin{proof}
If $B$ has at least two different eigenvalues, then $
(A,B,\alpha,\beta,a,b)\in \overline{\mathcal{CV}(m)^o}$ by Lemma \ref{closure case 1: two different eigenvalues}.
We may assume that $B$ has a single eigenvalue $b'$.
Since $\mathrm{rank}(A_2)<r$, there is $0\neq \beta_2\in M_{1\times r}(\mathbb{K})$ such that $\beta_2A_2=0$.
Take $\alpha_2\in M_{r\times 1}(\mathbb{K})$ such that $\beta_2\alpha_2\neq 0$.
For any $c\in \mathbb{K}$, let 
\[
B(c)=\begin{pmatrix}
    B_1+c\alpha_2\beta_2& B_2\\
    0& B_4
\end{pmatrix},
\]
then $AB(c)=B(c)A$ and $\beta B(c)=b\beta$, thus $(A,B(c),\alpha,\beta,a,b)\in \mathcal{CV}(m)$.
Let $D=\mathbb{K}\setminus\{ 0\}$. 
For any $c\in D$, we have $\mathrm{tr}(B(c))=mb'+c\beta_2\alpha_2\neq mb'$ and $\det(B(c)-b'I_m)=0$, thus
$B(c)$ has at least two different eigenvalues, thus $(A,B(c),\alpha,\beta,a,b)\in \overline{\mathcal{CV}(m)^o}$ by Lemma \ref{closure case 1: two different eigenvalues}.
Since $D$ is dense in $\mathbb{K}$, we see that $(A,B,\alpha,\beta,a,b)\in\overline{\mathcal{CV}(m)^o}$.    
\end{proof}

\begin{lemma}\label{closure case 4: A_4=a'}
    Let
\[
A=\begin{pmatrix}
    a'I_r& A_2\\
    0&a'I_{m-r}
\end{pmatrix},B=\begin{pmatrix}
    B_1& B_2\\
    0&B_4
\end{pmatrix},\beta=(0,\beta_1),
\]
where $B_1\in M_{r}(\mathbb{K}),\beta_1 \in M_{1\times (m-r)}(\mathbb{K})$, $\mathrm{rank}(A_2)<m-r,0<r<m$ and $
(A,B,\alpha,\beta,a,b)\in \mathcal{CV}(m)$.
If $
\overline{\mathcal{CV}(m')^o}=\mathcal{CV}(m')$ for any $m'<m$, then $
(A,B,\alpha,\beta,a,b)\in \overline{\mathcal{CV}(m)^o}$.
\end{lemma}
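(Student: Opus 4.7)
The plan is to mirror the strategy of Lemma~\ref{closure case 3: rank(A_2)<r}: by Lemma~\ref{closure case 1: two different eigenvalues} I may assume $B$ has a single eigenvalue $b'$, then I perturb $B$ to produce two distinct eigenvalues and conclude by a second application of Lemma~\ref{closure case 1: two different eigenvalues} plus a limit $c\to 0$. Since $\mathrm{rank}(A_2)<m-r$, choose $0\neq\alpha_2\in M_{(m-r)\times 1}(\mathbb{K})$ with $A_2\alpha_2=0$, and for a row $\beta_2'\in M_{1\times(m-r)}(\mathbb{K})$ set
\begin{equation*}
B(c)=B+c\begin{pmatrix}0 & 0\\ 0 & \alpha_2\beta_2'\end{pmatrix}.
\end{equation*}
The identity $A_2\alpha_2=0$ makes $[A,B(c)]=0$ automatic, while $\beta B(c)=b\beta$ reduces to $(\beta_1\alpha_2)\beta_2'=0$.

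In the main case one can choose $\alpha_2\in\ker A_2\cap\ker\beta_1$. Then picking $\beta_2'$ with $\beta_2'\alpha_2\neq 0$, block upper-triangularity gives $\det(B(c)-b'I_m)=\det(B_1-b'I_r)\cdot\det(B_4-b'I_{m-r}+c\alpha_2\beta_2')$, which vanishes because $B_1$ has only eigenvalue $b'$; meanwhile $\mathrm{tr}(B(c))=mb'+c\beta_2'\alpha_2\neq mb'$ for $c\neq 0$. So $B(c)$ has at least two distinct eigenvalues; Lemma~\ref{closure case 1: two different eigenvalues} then applies, and the limit $c\to 0$ finishes this case.

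The residual case is when $\beta_1$ is injective on $\ker A_2$, which forces $\dim\ker A_2=1$ and $\mathrm{rank}(A_2)=m-r-1$. If $m-r-1<r$ I apply Lemma~\ref{closure case 3: rank(A_2)<r} directly; otherwise $\mathrm{rank}(A_2)\geq r$ combined with $\mathrm{rank}(A_2)\leq\min(r,m-r)$ forces $m=2r+1$ and $A_2$ surjective of rank $r$. After a basis change putting $A_2=(I_r,0)$, the nilpotent $N=A-a'I$ satisfies $N^2=0$ and $[N,B]=0$ constrains $B$ to the $(r,r,1)$ block form
\begin{equation*}
B=\begin{pmatrix} B_1 & B_{12} & B_{13} \\ 0 & B_1 & 0 \\ 0 & B_{32} & b' \end{pmatrix}.
\end{equation*}
I take a perturbation $F$ commuting with $N$, with top-left block $F_1=e_{11}$ and the remaining entries of its bottom-right $(r+1)\times(r+1)$ piece determined by $\beta_1 F_4=0$ (solvable because the corner scalar $\beta_1\alpha_2$ is nonzero). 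A direct expansion gives $\det(B(c)-\lambda I_m)=\det(B_1+cF_1-\lambda I_r)^2(b'-\lambda)$, so $b'$ remains an eigenvalue, while $\mathrm{tr}(B_1+cF_1)=rb'+c\neq rb'$ for $c\neq 0$ in every characteristic forces some eigenvalue of $B_1+cF_1$ away from $b'$. A final application of Lemma~\ref{closure case 1: two different eigenvalues} and the limit $c\to 0$ complete the proof. The main obstacle is exactly this residual case, where the naive perturbation fails: the remedy exploits the rigid block structure imposed by $N^2=0$ together with the surjectivity of $A_2$ to build an alternative perturbation whose trace effect on the diagonal block $B_1$ is nonzero in every characteristic.
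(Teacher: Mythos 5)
Your proof is correct, but it takes a substantially more complicated route than the paper. The source of the extra difficulty is that you insist on preserving the eigenvalue $b$ exactly, i.e.\ you require $\beta B(c)=b\beta$, which forces $\beta_1B_4'=0$ and hence a case split on whether $\ker A_2\cap\ker\beta_1$ is nonzero. The paper's proof instead only needs $\beta$ to remain a \emph{left eigenvector} of $B(c)$, not with the same eigenvalue: it sets $B_4'=\alpha'\beta_1$ when $\beta_1\alpha'\neq 0$, so that $\beta B(c)=(b+c\beta_1\alpha')\beta$, and the perturbed tuple $(A,B(c),\alpha,\beta,a,b+c\beta_1\alpha')$ is still in $\mathcal{CV}(m)$ because $b$ is a free coordinate of $\mathcal{CV}(m)$. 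With that one relaxation the trace/determinant argument goes through uniformly with $\mathrm{tr}(B_4')=\beta_1\alpha'\neq 0$, and your entire residual case (where $\beta_1$ is injective on $\ker A_2$, forcing $m=2r+1$, $A_2$ surjective, and the delicate $F$-construction with $N^2=0$) is avoided. Your residual-case analysis does check out — the block form of $B$ under $[N,B]=0$, the choice $F_1=e_{11}$, the solvability of $\beta_1F_4=0$ via the nonzero corner scalar, and the factorization $\det(B(c)-\lambda I)=\det(B_1+cF_1-\lambda I_r)^2(b'-\lambda)$ are all correct — but it is effort the simpler observation makes unnecessary.
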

\begin{proof}
If $B$ has at least two different eigenvalues, then $
(A,B,\alpha,\beta,a,b)\in \overline{\mathcal{CV}(m)^o}$ by Lemma \ref{closure case 1: two different eigenvalues}.
We may assume that $B$ has a single eigenvalue $b'$. 
Since $\mathrm{rank}(A_2)<m-r$, there is $0\neq \alpha'\in M_{(m-r)\times 1}(\mathbb{K})$ such that $A_2\alpha'=0$. 
Take 
\[
B_4'=\begin{cases}
    \alpha'\gamma, \text{ where } \gamma\in M_{1\times (m-r)}(\mathbb{K}),\gamma\alpha'=1& \text{ if }\beta_1\alpha'=0\\
    \alpha' \beta_1& \text{ if }\beta_1\alpha'\neq 0
\end{cases}.
\]
For any $c\in \mathbb{K}$, let 
\[
B(c)=\begin{pmatrix}
    B_1& B_2\\
    0& B_4+cB_4'
\end{pmatrix},
\]
then $AB(c)=B(c)A$ and $\beta B(c)=(b+c\beta_1\alpha')\beta$, thus $(A,B(c),\alpha,\beta,a,b+\beta_1\alpha')\in \mathcal{CV}(m)$.
Let $D=\mathbb{K}\setminus\{ 0\}$. 
For any $c\in D$, we have $\mathrm{tr}(B(c))=mb'+cB_4'\neq mb'$ and $\det(B(c)-b'I_m)=0$, thus
$B(c)$ has at least two different eigenvalues, thus $(A,B(c),\alpha,\beta,a,b+c\beta_1\alpha')\in \overline{\mathcal{CV}(m)^o}$ by Lemma \ref{closure case 1: two different eigenvalues}.
Since $D$ is dense in $\mathbb{K}$, we see that $(A,B,\alpha,\beta,a,b)\in\overline{\mathcal{CV}(m)^o}$.
\end{proof}

\begin{lemma}\label{closure of CV(2)^o}
    $\overline{\mathcal{CV}(2)^o}=\mathcal{CV}(2)$.
\end{lemma}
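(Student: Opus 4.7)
The plan is to reduce every point of $\mathcal{CV}(2)$ to Lemma~\ref{closure case 1: two different eigenvalues} by a direct perturbation. The base case $m'=1$ of that lemma holds trivially since $\mathcal{CV}(1)=\mathcal{CV}(1)^o$. Given $(A,B,\alpha,\beta,a,b)\in\mathcal{CV}(2)$, if $A$ or $B$ has two distinct eigenvalues, Lemma~\ref{closure case 1: two different eigenvalues} applies directly; otherwise $A$ and $B$ have single eigenvalues $a',b'$ respectively.

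Since $A$ and $B$ commute they admit a simultaneous upper triangularization. Using the $GL_2(\mathbb{K})$-equivariance of $\mathcal{CV}(2)^o$ (which preserves $\overline{\mathcal{CV}(2)^o}$), we may arrange
\[
A=\begin{pmatrix}a' & a''\\ 0 & a'\end{pmatrix},\qquad B=\begin{pmatrix}b' & b''\\ 0 & b'\end{pmatrix},
\]
with $\alpha\in\mathrm{span}(e_1)$ (if $\alpha\neq 0$) and $\beta\in\mathrm{span}((0,1))$ (if $\beta\neq 0$). This alignment is automatic when $A$ (resp.\ $B$) is a nontrivial Jordan block, since its eigenspace (resp.\ left eigenspace) is forced to be $\mathrm{span}(e_1)$ (resp.\ $\mathrm{span}((0,1))$). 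When $A$ or $B$ is scalar we have additional basis freedom; together with the orthogonality $\beta\alpha=0$ this allows us to place $\alpha,\beta$ in these positions in all residual subcases.

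Next, introduce the family
\[
A_\epsilon=\begin{pmatrix}a'+\epsilon_A & a''\\ 0 & a'\end{pmatrix},\qquad B_\epsilon=\begin{pmatrix}b'+\epsilon_B & b''\\ 0 & b'\end{pmatrix},
\]
where the commutator $[A_\epsilon,B_\epsilon]$ vanishes iff $a''\epsilon_B=b''\epsilon_A$. Take $(\epsilon_A,\epsilon_B)=(\epsilon,\,b''\epsilon/a'')$ when $a''\neq 0$ (so $A_\epsilon$ has two distinct eigenvalues $a'+\epsilon,a'$), and $(\epsilon_A,\epsilon_B)=(0,\epsilon)$ when $a''=0$ (so $B_\epsilon$ has two distinct eigenvalues $b'+\epsilon,b'$). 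Setting $\alpha_\epsilon=\alpha$, $\beta_\epsilon=\beta$, and $a_\epsilon,b_\epsilon$ equal to the corresponding eigenvalues of $A_\epsilon,B_\epsilon$, a direct computation using $\alpha\in\mathrm{span}(e_1)$, $\beta\in\mathrm{span}((0,1))$, and $\beta\alpha=0$ verifies all four defining conditions of $\mathcal{CV}(2)$. Hence $(A_\epsilon,B_\epsilon,\alpha_\epsilon,\beta_\epsilon,a_\epsilon,b_\epsilon)\in\mathcal{CV}(2)$, and by Lemma~\ref{closure case 1: two different eigenvalues} this perturbed point lies in $\overline{\mathcal{CV}(2)^o}$. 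Passing to the limit $\epsilon\to 0$ produces $(A,B,\alpha,\beta,a,b)\in\overline{\mathcal{CV}(2)^o}$, as desired.

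The main technical obstacle is the coordinated basis choice that simultaneously upper-triangularizes $A,B$ while aligning $\alpha,\beta$; the orthogonality $\beta\alpha=0$ is essential, ensuring that the required configuration is always achievable across all subcases ($a''$ zero or not, $b''$ zero or not, $\alpha$ or $\beta$ zero or not). Once the basis is secured, the perturbation scheme is uniform and the verification is routine.
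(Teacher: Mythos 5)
Your approach is close in spirit to the paper's but more uniform: the paper dispatches the case where $A$ or $B$ is scalar through Lemma~\ref{closure case 4: A_4=a'} and only uses a perturbation for the residual Jordan-block case, whereas you run a single perturbation scheme $(A_\epsilon, B_\epsilon) = (A+\epsilon_A E_{11}, B+\epsilon_B E_{11})$ across all cases. Your perturbation in the $a''\neq 0$ sub-case is essentially the paper's family $A(d),B(d)$. The uniformity is a genuine simplification, and the overall argument does work, but there is a real over-statement in the alignment step that should be flagged.

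The claim that ``when $A$ or $B$ is scalar we have additional basis freedom; together with $\beta\alpha=0$ this allows us to place $\alpha,\beta$ in these positions in all residual subcases'' is false. Consider $A=a'I_2$ scalar, $B$ a nontrivial Jordan block, $\beta=0$, and $\alpha=e_2$. Here $\beta\alpha=0$ is vacuous and imposes nothing on $\alpha$. To keep $B$ upper triangular you may only conjugate by upper triangular $g$, and such $g$ cannot send $e_2$ into $\mathrm{span}(e_1)$. So the stated alignment is not achievable; the symmetric failure occurs with $\alpha=0$, $A$ a Jordan block, $B$ scalar, and $\beta\notin\mathrm{span}((0,1))$. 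Since your ``direct computation'' is explicitly premised on $\alpha\in\mathrm{span}(e_1)$ and $\beta\in\mathrm{span}((0,1))$, the proof as written does not cover these configurations.

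The gap is easily filled, because in exactly those configurations the perturbation still succeeds for a different reason. If $\epsilon_A\neq 0$ then by your choice $a''\neq 0$, so $A$ is a Jordan block and the eigenspace constraint \emph{forces} $\alpha\in\mathrm{span}(e_1)$ when $\alpha\neq 0$; if instead $\epsilon_A=0$ (the $a''=0$ branch) then $A_\epsilon=A$ is scalar and \emph{every} $\alpha$ is an eigenvector, so no alignment of $\alpha$ is needed. Dually for $\beta$: if $\epsilon_B\neq 0$ and $b''\neq 0$, the alignment of $\beta$ is forced; if $\epsilon_B\neq 0$ but $b''=0$, then necessarily $a''=0$ as well, both $A,B$ are scalar, and the full conjugation freedom lets you align $\beta$; if $\epsilon_B=0$ then $B_\epsilon=B$ is scalar and $\beta$ needs no alignment. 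Replace the blanket alignment claim by this case check (which only ever requires aligning the vector whose perturbation parameter is nonzero, and that alignment is then either forced or attainable), and the proof is complete.
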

\begin{proof}
Let $(A,B,\alpha,\beta,a,b)\in \mathcal{CV}(2)$. Note that $\overline{\mathcal{CV}(1)^o}=\mathcal{CV}(1)$. If $A$ or $B$ has at least two different eigenvalues, 
then $(A,B,\alpha,\beta,a,b)\in \overline{\mathcal{CV}(2)^o}$ by Lemma \ref{closure case 1: two different eigenvalues}.
If $A=a_1I_2$ or $B=b_1I_2$, then we may assume 
\[
A=a_1I_2,B=\begin{pmatrix}
    b_1&b_2\\
    0&b_3
\end{pmatrix},\alpha=\begin{pmatrix}
    c_1\\
    c_2
\end{pmatrix},\beta=(0,c_3),c_2c_3=0.
\]
 By Lemma \ref{closure case 4: A_4=a'}, we can see that $(A,B,\alpha,\beta,a,b)\in\overline{\mathcal{CV}(2)^o}$.

If $A$ has a single eigenvalue with an eigenspace of dimension $1$, we may assume
\[
A=\begin{pmatrix}
    a_1&a_2\\
    0&a_1
\end{pmatrix},B=\begin{pmatrix}
    b_1&b_2\\
    0&b_1
\end{pmatrix},\alpha=\begin{pmatrix}
    c\\
    0
\end{pmatrix},\beta=(c',c''),a_2\neq 0,cc'=0.
\]
If $b_2=0$, then $(A,B,\alpha,\beta,a,b)\in\overline{\mathcal{CV}(2)^o}$ by similar to the proof of Lemma \ref{closure case 4: A_4=a'}. We may assume $b_2\neq 0$, so $c'=0$.
For any $d\in \mathbb{K}$, let 
\[
A(d)=\begin{pmatrix}
    a_1+d&a_2\\
    0&a_1
\end{pmatrix},B(d)=\begin{pmatrix}
    b_1+\frac{b_2}{a_2}d&b_2\\
    0&b_1
\end{pmatrix},
\]
then $A(d)B(d)=B(d)A(d)$ and $\beta B(d)=b\beta,A(d)\alpha=(a+d)\alpha$, thus $(A(d),B(d),\alpha,\beta,a+d,b)\in \mathcal{CV}(2)$.
Let $D=\mathbb{K}\setminus\{0\}$. 
For any $d\in D$, the matrix $A(d)$ has two different eigenvalues, thus $(A(d),B(d),\alpha,\beta,a+d,b)\in \overline{\mathcal{CV}(2)^o}$ by Lemma \ref{closure case 1: two different eigenvalues}.
Since $D$ is dense in $\mathbb{K}$, we see that $(A,B,\alpha,\beta,a,b)\in\overline{\mathcal{CV}(2)^o}$.
\end{proof}

\begin{lemma}\label{CV^o dense}
    For any $m\geq 1$, $\mathcal{CV}(m)^o$ is dense in $\mathcal{CV}(m)$.
\end{lemma}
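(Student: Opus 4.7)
The plan is to argue by induction on $m$. The base case $m=1$ is immediate (every tuple in $\mathcal{CV}(1)$ lies in $\mathrm{im}(\psi_{1,m_1,m_2})$ for suitable $(m_1,m_2)\in\{0,1\}^2$), and $m=2$ is exactly Lemma \ref{closure of CV(2)^o}. For $m\geq 3$, assume the statement for all $m'<m$; this validates the hypothesis ``$\overline{\mathcal{CV}(m')^o}=\mathcal{CV}(m')$ for any $m'<m$'' appearing in Lemmas \ref{closure case 1: two different eigenvalues}, \ref{closure case 3: rank(A_2)<r} and \ref{closure case 4: A_4=a'}. By Lemma \ref{closure case 2: dim(eigenspace)<=m/2}, $\overline{\mathcal{CV}(m)\setminus \mathcal{CV}(m)_{1}} = \mathcal{CV}(m)$, so the task reduces to showing that every $(A, B, \alpha, \beta, a, b)\in \mathcal{CV}(m)\setminus \mathcal{CV}(m)_{1}$ lies in $\overline{\mathcal{CV}(m)^{o}}$.

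Fix such a point. If $A$ or $B$ has two distinct eigenvalues, Lemma \ref{closure case 1: two different eigenvalues} concludes. Otherwise $A$ and $B$ have single eigenvalues $a'$, $b'$, and at least one of the two eigenspaces has dimension $>m/2$. The set $\overline{\mathcal{CV}(m)^{o}}$ is $GL_{m}$-invariant (each $\mathrm{im}(\psi_{m,m_{1},m_{2}})$ is so by construction) and also preserved by the involution $(A, B, \alpha, \beta, a, b) \mapsto (B^{t}, A^{t}, \beta^{t}, \alpha^{t}, b, a)$, which, up to relabeling the indices, sends $\mathrm{im}(\psi_{m, m_1, m_2})$ into $\mathrm{im}(\psi_{m, m_2, m_1})$. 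One may therefore assume that $W:=\ker(A-a'I)$ has $\dim W=r>m/2$. Because $AB=BA$ and $\beta B=b\beta$, both $W$ and $\ker\beta$ are $B$-invariant, so $V:=W\cap\ker\beta$ is $B$-invariant as well. Choose a basis of $\mathbb{K}^{m}$ whose first $r':=\dim V$ vectors span $V$, whose next $r-r'$ vectors complete a basis of $W$, and whose last $m-r$ vectors complete a basis of $\mathbb{K}^{m}$. In this basis the $GL_m$-conjugate of the tuple takes the form required by Lemma \ref{closure case 3: rank(A_2)<r}, with $\mathrm{rank}(A_{2})\leq m-r$. Since $r'\in\{r,r-1\}$ and $r>m/2$, the strict inequality $\mathrm{rank}(A_{2})<r'$ holds except possibly in the borderline case where $m$ is odd, $r=(m+1)/2$, and $\beta|_{W}\neq 0$; outside this borderline, Lemma \ref{closure case 3: rank(A_2)<r} finishes the proof.

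The main obstacle is the borderline subcase. The plan is to exploit the remaining freedom in choosing a complement of $V$ within $W$: such a change modifies $A_{2}$ by $A_{2}\mapsto A_{2}-\Phi A_{2}'$ for an arbitrary $\Phi\in M_{r'\times(r-r')}(\mathbb{K})$, where $A_{2}'\in M_{(r-r')\times(m-r)}(\mathbb{K})$ records the projection of $(A-a'I)w_{j}$ onto $W\setminus V$. Whenever the rows of $A_{2}$ lie in the row span of $A_{2}'$, one may drive $\mathrm{rank}(A_{2})<r'$ and invoke Lemma \ref{closure case 3: rank(A_2)<r}. In the remaining rigid configurations, the image of $A-a'I$ is forced into $V$, so $(A-a'I)^{2}=0$ and $A_{4}=a'I_{m-r'}$, placing the tuple in the hypothesis of Lemma \ref{closure case 4: A_4=a'}. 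The hardest step is verifying this dichotomy cleanly; it requires a simultaneous analysis of the Jordan structures of $A|_{W}$ and $B|_{W}$ together with the constraints $A\alpha=a\alpha$ and $\beta\alpha=0$, after which the $GL_{m}$-stability of $\overline{\mathcal{CV}(m)^{o}}$ transfers the conclusion back to the original tuple and closes the induction.
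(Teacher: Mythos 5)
Your overall strategy is the same as the paper's: induction on $m$, base cases via \Cref{closure of CV(2)^o}, reduction via \Cref{closure case 2: dim(eigenspace)<=m/2}, disposal of the two-eigenvalue case via \Cref{closure case 1: two different eigenvalues}, $GL_m$-invariance and the transpose involution, and then a block decomposition feeding into \Cref{closure case 3: rank(A_2)<r} and \Cref{closure case 4: A_4=a'}. The explicit justification of the $A\leftrightarrow B$ symmetry via the transpose involution is a nice clarification. However, the final reduction maneuver is wrong as stated: the \emph{dichotomy} you assert in the borderline case is false, and my concern is not merely that you left it unverified.

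Concretely, you restrict to conjugations that only re-choose the complement of $V$ inside $W$, producing the modification $A_2\mapsto A_2-\Phi A_2'$ where $A_2'$ (your $C_2$) records the component of $(A-a'I)$ of the last $m-r$ basis vectors landing in $W\setminus V$. You then claim that when this fails to lower $\mathrm{rank}(A_2)$ below $r'$, the image of $A-a'I$ is forced into $V$, giving $(A-a'I)^2=0$ and $A_4=a'I_{m-r'}$. This is simply not true, even after imposing all the constraints of $\cCV(m)$. Take $m=5$, $a'=a=b=b'=0$, $B=0$, $\alpha=e_1$, $\beta=e_3^t$, and
\begin{equation*}
A=\begin{pmatrix}0&0&0&1&0\\0&0&0&0&1\\0&0&0&0&0\\0&0&0&0&1\\0&0&0&0&0\end{pmatrix}.
\end{equation*}
Here $W=\ker A=\mathrm{span}(e_1,e_2,e_3)$ has $\dim W=3>5/2$, $V=W\cap\ker\beta=\mathrm{span}(e_1,e_2)$ has $\dim V=2=r'$; in your $r'\mid(m-r')$ decomposition one finds $\mathrm{rank}(A_2)=2=r'$ (borderline), $A_2'=C_2=0$, yet $A^2\neq 0$ and $A_4\neq 0\cdot I_3$. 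So neither horn of your dichotomy holds, and the $\Phi$-freedom alone (which now does nothing, since $C_2=0$) cannot lower $\mathrm{rank}(A_2)$.

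The missing ingredient is conjugation by upper unitriangular matrices whose nonzero off-diagonal block sits in the top-right corner (modifying the complement of $W$ rather than the complement of $V$ in $W$). This is exactly what the paper does: it works directly with $V$ (the paper's $W$), notes $\dim V>m/2-1$, and in the case $A_4\neq a'I_{m-r'}$ it produces a left eigenvector $\gamma=(\gamma',\gamma'')$ of $A'$ for $a'$ with $\gamma'\neq 0$ (this exists because the $a'$-left-eigenspace of $A'$ has dimension $>m/2$ while those $\gamma$ with $\gamma'=0$ form a subspace of dimension $\leq m-r'-1<m/2$), and then conjugates by $Q=\begin{pmatrix}I_{r'}&\alpha''\gamma''\\0&I_{m-r'}\end{pmatrix}$ to replace $A_2$ by $A_2+\alpha''\gamma''(A_4-a'I_{m-r'})$, which is annihilated on the left by $\gamma'\neq 0$, hence has rank $<r'$. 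In the example above this $Q$-conjugation does succeed. You would need to incorporate this larger conjugation freedom (or the equivalent left-eigenvector argument) to close the borderline case; the dichotomy in terms of $\Phi$ alone cannot be salvaged.
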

\begin{proof}
     We will prove this Lemma by induction on $m$.  The proof is clear for $m=1$ and $m=2$ by Lemma \ref{closure of CV(2)^o}. We assume $m\geq 3$ and make the inductive hypothesis that if $0<r<m$, then $\overline{\mathcal{CV}(r)^o}=\mathcal{CV}(r)$. 
Let $\mathcal{CV}(m)_1$ be the set of $(A_1,A_2,\alpha,\beta,a,b)\in \mathcal{CV}(m)$ such that $A_i$ has a single eigenvalue with an eigenspace of dimension $\leq \frac{m}{2}$ for $i=1,2$, and let $V=\mathcal{CV}(m)\setminus \mathcal{CV}(m)_1$.
Thus $\dim(\mathcal{CV}(m)_1)\leq m^2+m$ and $\overline{V}=\mathcal{CV}(m)$. by Lemma \ref{closure case 2: dim(eigenspace)<=m/2}.     
%\XP{\sout{By [\ref{Motzkin-Taussky}, Theorem 6], the scheme $\mathrm{Spec}R(m)$ is irreducible of dimension $m^2+m$, thus any irreducible component of $\mathcal{CV}(m)$ has dimension $\geq m^2+m+1$.Hence $\overline{V}=\mathcal{CV}(m)$.}}
So it suffices to show that $V\subseteq \overline{\mathcal{CV}(m)^o}$.
     
Let $(A,B,\alpha,\beta,a,b)\in V$. 
If $A$ or $B$ has at least two different eigenvalues, 
then $(A,B,\alpha,\beta,a,b)\in \overline{\mathcal{CV}(m)^o}$ by Lemma \ref{closure case 1: two different eigenvalues}.
Thus we may assume that $A$ has a single eigenvalue $a'$ with an eigenspace of dimension $> \frac{m}{2}$ and $B$ has a single eigenvalue $b'$.
Let $W=\{\gamma\in M_{m\times 1}(\mathbb{K})\mid A\gamma=a'\gamma,\beta \gamma=0\}$.
Thus $\alpha\in W$ and $\dim(W)>\frac{m}{2}-1$.
For any $\gamma\in W$, since $AB=BA$, we have $
AB\gamma=a'B\gamma$ and $\beta B\gamma=b\beta\gamma=0$, thus $B\gamma\in W$.
If $\dim(W)=m$, then $A=a'I_m$ and $\beta=0$, thus there is $P'$ such that
\[
P'BP'^{-1}=\begin{pmatrix}
    B_1'&B_2'\\
0&b'
\end{pmatrix}, 
\]
thus $(A,B,\alpha,\beta,a,b)\in \overline{\mathcal{CV}(m)^o}$ by Lemma \ref{closure case 4: A_4=a'}. Hence we may assume $\dim(W)<m$.
Take $P\in GL_m(\mathbb{K})$ such that the first $r$ columns of $P$ is a $\mathbb{K}$-linear basis of $W$ and $0<\frac{m}{2}-1<r<m$.
Let $A'=P^{-1}AP,B'=P^{-1}BP,\alpha'=P^{-1}\alpha,\beta'=\beta P$. Thus $(A',B',\alpha',\beta',a,b)\in V$ and we may assume 
\[
A'=\begin{pmatrix}
    a'I_r& A_2\\
    0&A_4
\end{pmatrix},B'=\begin{pmatrix}
    B_1& B_2\\
    0&B_4
\end{pmatrix},\alpha'=\begin{pmatrix}
    \alpha_1\\
    0
\end{pmatrix},\beta'=(0,\beta_1),
\]
where $B_1\in M_{r}(\mathbb{K}),\alpha_1\in M_{r\times 1}(\mathbb{K}),\beta_1 \in M_{1\times (m-r)}(\mathbb{K})$.

If $\mathrm{rank}(A_2)<r$, then $(A',B',\alpha',\beta',a,b)\in \overline{\mathcal{CV}(m)^o}$ by Lemma \ref{closure case 3: rank(A_2)<r}.
If $A_4\neq a'I_{m-r}$, then $\ker(A_4-a'I_{m-r})\leq m-r-1<\frac{m}{2}<\ker(A'-a'I_{m})$, thus there is $\gamma=(\gamma',\gamma'')\in M_{1\times m}(\mathbb{K})$ such that $0\neq \gamma'\in M_{1\times r}(\mathbb{K})$ and $\gamma A'=a'\gamma$, thus $\gamma' A_2+\gamma''(A_4-a'I_{m-r})=0$.
Take $\alpha''\in M_{r\times 1}(\mathbb{K})$ such that $\gamma'\alpha''=1$. Let 
\[
Q=\begin{pmatrix}
    I_r& \alpha''\gamma''\\
    0&I_{m-r}
\end{pmatrix}.
\]
Thus 
\[
QA'Q^{-1}=\begin{pmatrix}
    a'I_r& A_2+ \alpha''\gamma''(A_4-a'I_{m-r})\\
    0&A_4
\end{pmatrix}, QB'Q^{-1}=\begin{pmatrix}
    B_1'&B_2'\\
    0&B_4'
\end{pmatrix}, 
\]
and $Q\alpha'=\alpha',\beta'Q^{-1}=\beta'$. Since $\gamma' A_2+\gamma''(A_4-a'I_{m-r})=0$ and $\gamma'\neq 0$, we have $\gamma'( A_2+ \alpha''\gamma''(A_4-a'I_{m-r}))=0$, thus $(QA'Q^{-1},QB'Q^{-1},\alpha',\beta',a,b)\in \overline{\mathcal{CV}(m)^o}$ by Lemma \ref{closure case 3: rank(A_2)<r}, hence $(A',B',\alpha',\beta',a,b)\in \overline{\mathcal{CV}(m)^o}$.

If $A_4= a'I_{m-r}$ and $\mathrm{rank}(A_2)=r$, then $\mathrm{rank}(A'-a'I_m)=r<\frac{m}{2}$, thus $m=2s+1$ and $r=s<m-r$ since $\frac{m}{2}-1<r$. Hence $(A',B',\alpha',\beta',a,b)\in\overline{\mathcal{CV}(m)^o}$ by Lemma \ref{closure case 4: A_4=a'}.
\end{proof}

Finally, we will show that $\mathrm{Spec}\Tilde{R}(m)/(t,w_{m})$ is regular at all generic points.

\begin{lemma}\label{Tilde(R)/(vu,t) regular point}
    The scheme  $\mathrm{Spec}\Tilde{R}(m)/(t,w_{m})$ is equidimensonal of dimension $m^2+m+2$ and is regular at all generic points.
\end{lemma}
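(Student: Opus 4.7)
The plan has two parts: equidimensionality follows directly from the preceding lemmas, and regularity at generic points comes from a Jacobian calculation at a well-chosen test point in each irreducible component.

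For equidimensionality, the closed points of $\mathrm{Spec}\Tilde{R}(m)/(t,w_m)$ coincide with $\mathcal{CV}(m)$, since setting $t=0$ and $w_m=\sum_{i=1}^m u_iv_i=0$ in the defining equations of $\Tilde{R}(m)$ recovers exactly the defining conditions of $\mathcal{CV}(m)$ (namely $[X_m,Y_m]=0$, $(X_m-t_1I_m)\mathbf{u}=0$, $\mathbf{v}(Y_m-t_2I_m)=0$, and $\beta\alpha=0$). By Lemma \ref{CV^o dense}, $\mathcal{CV}(m)=\overline{\mathcal{CV}(m)^{\circ}}=\bigcup_{0\leq m_1+m_2\leq m}\overline{\mathrm{im}(\psi_{m,m_1,m_2})}$, and by Lemma \ref{dim(im(psi))} each $\overline{\mathrm{im}(\psi_{m,m_1,m_2})}$ is irreducible of dimension $m^2+m+2$; hence every minimal prime of $\Tilde{R}(m)/(t,w_m)$ has dimension $m^2+m+2$.

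For regularity at the generic point of each component $Z_{m_1,m_2}:=\overline{\mathrm{im}(\psi_{m,m_1,m_2})}$, I would apply the Jacobian criterion at a carefully chosen closed point $p_{m_1,m_2}\in\mathrm{im}(\psi_{m,m_1,m_2})$: showing that the Jacobian matrix of the generators of $J(m)+(t,w_m)$ at $p_{m_1,m_2}$ has rank equal to the codimension $(2m^2+2m+3)-(m^2+m+2)=m^2+m+1$ will, by upper semicontinuity, yield regularity on a dense open subset of $Z_{m_1,m_2}$ containing its generic point. A convenient choice is $p_{m_1,m_2}=\psi_{m,m_1,m_2}(I_m,a_0,\ldots,a_{m-m_1},b_0,\ldots,b_{m-m_2},\alpha',\beta')$ with the $a_i$'s pairwise distinct, the $b_j$'s pairwise distinct, and $\alpha',\beta'$ having all nonzero entries: at this point $X_m,Y_m$ are diagonal, the supports of $\mathbf{u}$ and $\mathbf{v}$ are disjoint (so $\mathbf{v}\mathbf{u}=0$ automatically), and $t=w_m=0$. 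The Jacobian then splits into blocks: the $m^2-m$ rows from off-diagonal commutator entries $[X_m,Y_m]_{ij}-tu_iv_j$ are essentially independent via the non-vanishing of the factors $a_i-a_j$ and $b_i-b_j$; the $m$ diagonal commutator rows vanish identically at $p_{m_1,m_2}$ (consistent with $\sum_i([X_m,Y_m]_{ii}-tu_iv_i)=-tw_m$ combined with $t=w_m=0$); the row for $t$ adds $1$; and the $2m+1$ remaining rows from $(X_m-t_1I_m)\mathbf{u}$, $\mathbf{v}(Y_m-t_2I_m)$ and $w_m$ contribute $2m$ independent rows after accounting for a single linear dependence, for a total rank of $m^2+m+1$.

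The main obstacle is the verification of this rank count, which requires case-by-case bookkeeping depending on the block structure determined by $(m_1,m_2)$. The key ingredient is the polynomial identity
\[
\mathbf{v}(X_m-Y_m)\mathbf{u}=\sum_{i=1}^m v_i\bigl((X_m-t_1I_m)\mathbf{u}\bigr)_i-\sum_{j=1}^m u_j\bigl(\mathbf{v}(Y_m-t_2I_m)\bigr)_j+(t_1-t_2)\,w_m,
\]
which at the tangent level at $p_{m_1,m_2}$ witnesses the single linear dependence among the Jacobian rows of the $g_i$, $h_j$, and $w_m$ generators (possibly combined with an off-diagonal commutator row $f_{ji}$ selected by the pairs $(j,i)$ for which $u_jv_i\neq 0$, i.e. $j\leq m_1$ and $i>m-m_2$). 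Once the rank of $m^2+m+1$ is verified at each test point by exhibiting an explicit non-vanishing minor of that size, regularity at the generic point of each component follows, completing the proof.
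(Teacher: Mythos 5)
Your approach is essentially the same as the paper's: equidimensionality is deduced from the two preceding lemmas, and regularity at generic points is shown by applying the Jacobian criterion at a closed point of $\mathrm{im}(\psi_{m,m_1,m_2})$ for each $(m_1,m_2)$, chosen with $g=I_m$ so that $X_m,Y_m$ are diagonal, $\mathbf u$ is supported on the first $m_1$ coordinates, $\mathbf v$ on the last $m_2$, and the relevant eigenvalue differences are nonzero. The paper's test point $P_{m_1m_2}$ is exactly yours after normalizing $a_0=b_0=1$ and all entries of $\alpha',\beta'$ to $1$.

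Where you diverge is only in how the rank is pinned down. You try an \emph{additive} count of ``independent rows plus one dependence,'' invoking the identity
$\mathbf v(X_m-Y_m)\mathbf u=\sum_i v_i g_i-\sum_j u_j h_j+(t_1-t_2)w_m$.
This identity is correct, but it is not by itself a linear dependence among the Jacobian rows of the generators: the left side $\mathbf v(X_m-Y_m)\mathbf u$ is not in the ideal, and its gradient at $p_{m_1,m_2}$ is nonzero whenever $m_1,m_2>0$ (the $\partial/\partial x_{ij}$ entries are $v_iu_j$, which are $1$ for $i>m-m_2$, $j\le m_1$). So the would-be dependence necessarily pulls in off-diagonal $f_{ij}$ rows as well, which you acknowledge but do not resolve, and the additive bookkeeping then becomes delicate because the $g_i,h_j$ rows and the $f_{ij}$ rows are not supported on disjoint sets of columns ($g_i$ has nonzero entries in the off-diagonal columns $x_{ij'}$ for $j'\le m_1$). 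The paper avoids all of this: it does not identify any dependence, but simply exhibits a block \emph{lower-triangular} $(m^2+m)\times(m^2+m)$ submatrix $Q$ with invertible diagonal blocks $Q_{11}$ (the off-diagonal $f_{ij}$ rows against a choice of $x_{ij}$ or $y_{ij}$), $Q_{22}$ (the $g_i$ rows against $z_i\in\{x_{ii},u_i\}$), $Q_{33}$ (the $h_i$ rows against $\tau_i\in\{y_{ii},v_i\}$); since the a priori codimension bound forces $\mathrm{rank}\,J\le m^2+m$ (in the presentation without $t$), this suffices. Your final number ($m^2+m+1$, counting $t$ as an extra variable and generator) matches the paper's $m^2+m$ after the trivial $\partial t/\partial t$ row, so the conclusion is the same; but if you want the argument to be watertight, I would recommend replacing the ``single dependence'' discussion with the upper-bound-plus-invertible-submatrix argument, which is both shorter and avoids any case analysis on $(m_1,m_2)$.
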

\begin{proof}
The scheme $\mathrm{Spec}\Tilde{R}(m)/(t,w_m)$ is equidimensional of dimension $m^2+m+2$ by Lemma \ref{dim(im(psi))} and Lemma \ref{CV^o dense}.
For $0\leq m_1,m_2$ and $m_1+m_2\leq m$, 
let $P_{m_1m_2}=(A_{m_1},B_{m_2},\alpha_{m_1},\beta_{m_2},1,1)$ be a closed point of $\mathrm{Spec}\Tilde{R}(m)/(t,w_m)$, and
\[
A_{m_1}=\mathrm{diag}(a_1,\dots,a_{m}),B_{m_2}=\mathrm{diag}(b_1,\dots,b_{m}),
\]
\[
\alpha_{m_1}=(\gamma_{m_1},0)^t,\beta_{m_2}=(0,\gamma_{m_2}),\gamma_{i}=(1,\dots,1)\in \mathbb{K}^{i},
\]
and $1,a_{m_1+1},\dots,a_{m},b_{1},\dots,b_{m-m_2}$ be pairwise distinct and $a_i=b_j=1$ for $1\leq i\leq m_1$ and $m-m_2<j\leq m$. Thus each irreducible component of $\mathrm{Spec}R(m)/(t,w_m)$ contains $P_{m_1m_2}$ for some $m_1,m_2$ by Lemma \ref{CV^o dense}.
Obviously we have
\[
\Tilde{R}(m)/(t,w_m)=\mathbb{K}[x_{ij},y_{ij},u_i,v_i,t_1,t_2\mid 1\leq i,j\leq m]/J''
\]
where $J''=(f_{ij},g_i,h_i,w_m\mid 1\leq i,j\leq m)$ and  $f_{ij}=\sum_{r=1}^{m} x_{ir}y_{rj}-y_{ir}x_{rj}$, $(g_1,\dots,g_{m})^t=(X_{m}-t_1I_{m})(u_1,\dots,u_{m})^t$,  $(h_1,\dots,h_{m})=(v_1,\dots,v_{m})(Y_{m}-t_2I_{m})$.
Note that
 \[
\frac{\partial f_{ij}}{\partial x_{i'j'}}=\begin{cases}
    y_{jj}-y_{ii}& \text{if }i'=i,j'=j\\
 y_{j'j}& \text{if }i'=i,j'\neq j\\
 -y_{i'i}& \text{if }i'\neq i,j'=j\\
 0& \text{if }i'\neq i,j'\neq j\\
\end{cases},
\frac{\partial f_{ij}}{\partial y_{i'j'}}=\begin{cases}
 x_{ii}-x_{jj}& \text{if }i'=i,j'=j\\
 -x_{j'j}& \text{if }i'=i,j'\neq j\\
 x_{i'i}& \text{if }i'\neq i,j'=j\\
 0& \text{if }i'\neq i,j'\neq j\\
\end{cases},
\]
\[
\frac{\partial g_{i}}{\partial x_{i'j'}}=\begin{cases}
    u_{j'}& \text{if }i'=i\\
    0& \text{if }i'\neq i
\end{cases},
\frac{\partial g_{i}}{\partial u_{i'}}=\begin{cases}
    x_{ii'}-t_1& \text{if }i'=i\\
    x_{ii'}& \text{if }i'\neq i
\end{cases},
    \]
\[
\frac{\partial h_{i}}{\partial y_{i'j'}}=\begin{cases}
    v_{i'}& \text{if }j'=i\\
    0& \text{if }j'\neq i
\end{cases},
\frac{\partial h_{i}}{\partial v_{i'}}=\begin{cases}
    y_{i'i}-t_2& \text{if }i'=i\\
    y_{i'i}& \text{if }i'\neq i
\end{cases}.
    \]
Thus for any $0\leq m_1,m_2$ and $m_1+m_2\leq m$, we have
 \[
\frac{\partial f_{ij}}{\partial x_{i'j'}}(P_{m_1m_2})=\begin{cases}
    b_{j}-b_{i}\neq 0& \text{if }i'=i,j'=j,i\neq j, m-m_2\geq i\text{ or }j \\
 0& \text{otherwise }\\
\end{cases},\]
\[
\frac{\partial f_{ij}}{\partial y_{i'j'}}(P_{m_1m_2})=\begin{cases}
    a_{i}-a_{j}\neq 0& \text{if }i'=i,j'=j,i\neq j,m_1< i\text{ or }j \\
 0& \text{otherwise }\\
\end{cases},\]
\[
\frac{\partial g_{i}}{\partial x_{i'j'}}(P_{m_1m_2})=\begin{cases}
    1& \text{if }i'=i,1\leq j'\leq m_1\\
    0& \text{otherwise }
\end{cases},
\]
\[
\frac{\partial g_{i}}{\partial u_{i'}}(P_{m_1m_2})=\begin{cases}
    a_i-1\neq 0& \text{if }i'=i>m_1\\
    0& \text{otherwise }
\end{cases},
    \]
    \[
\frac{\partial h_{i}}{\partial y_{i'j'}}(P_{m_1m_2})=\begin{cases}
    1& \text{if }j'=i,m-m_2< i'\\
    0& \text{otherwise }
\end{cases},\]
\[
\frac{\partial h_{i}}{\partial v_{i'}}(P_{m_1m_2})=\begin{cases}
    b_i-1\neq 0& \text{if }i'=i\leq m-m_2\\
    0& \text{otherwise }
\end{cases}.
    \]
Let $J_{P_{m_1m_2}}\in M_{(m^2+2m+1)\times (2m^2+2m+2)}(\mathbb{K})$ be the Jacobian matrix. We have $\mathrm{rank}(J_{P_{m_1m_2}})\leq 2m^2+2m+2-(m^2+m+2)=m^2+m$.
For any $1\leq i\neq j,i'\neq j'\leq m$, let 
\[
\frac{\partial f_{ij}}{\partial z_{i'j'}}(P_{m_1m_2})=\begin{cases}
    \frac{\partial f_{ij}}{\partial x_{i'j'}}(P_{m_1m_2}) &\text{ if } m-m_2\geq i \text{ or }j\\
    \frac{\partial f_{ij}}{\partial y_{i'j'}}(P_{m_1m_2}) &\text{ if } i,j>m-m_2\\
\end{cases},
\]
\[
z_i=\begin{cases}
    x_{ii}& \text{ if }1\leq i\leq m_1\\
    u_i& \text{ if }i>m_1
\end{cases},\tau_i=\begin{cases}
    y_{ii}& \text{ if } i>m-m_2\\
    v_i& \text{ if }i\leq m-m_2
\end{cases}.
\]
Note that $J_{P_{m_1m_2}}$ has a submatrix $Q=(Q_{ij})_{1\leq i,j\leq 3}$,
where 
\begin{align*}
    & Q_{11}=\left(\frac{\partial f_{ij}}{\partial z_{i'j'}}(P_{m_1m_2})\right)_{1\leq i\neq j,i'\neq j'\leq m}\in GL_{m^2-m}(\mathbb{K}),\\
    & Q_{12}=\left(\frac{\partial f_{ij}}{\partial z_{i'}}(P_{m_1m_2})\right)_{1\leq i\neq j\leq m,1\leq i'\leq m}=0,\\
    & Q_{13}=\left(\frac{\partial f_{ij}}{\partial \tau_{i'}}(P_{m_1m_2})\right)_{1\leq i\neq j\leq m,1\leq i'\leq m}=0,\\
    &Q_{22}=\left(\frac{\partial g_{i}}{\partial z_{i'}}(P_{m_1m_2})\right)_{1\leq i,i'\leq m}\in GL_m(\mathbb{K}),\\
    &Q_{23}=\left(\frac{\partial g_{i}}{\partial \tau_{i'}}(P_{m_1m_2})\right)_{1\leq i,i'\leq m}=0,\\
    &Q_{33}=\left(\frac{\partial h_{i}}{\partial \tau_{i'}}(P_{m_1m_2})\right)_{1\leq i,i'\leq m}\in GL_m(\mathbb{K}).
\end{align*}
Thus $\mathrm{rank}(Q)=m^2+m$, hence $\mathrm{rank}(J_{P_{m_1m_2}})=m^2+m$. 
Hence $P_{m_1m_2}$ is a regular point of $\mathrm{Spec}\Tilde{R}(m)/(t,w_m)$.
 \end{proof}

This completes the proof of Lemma \ref{lemma: dim(Tilde(R)/(vu,t))}.

\subsection{Proof of Lemma \ref{lemma: dim(Tilde(R)/(t))}}\label{Proof of lemma: dim(Tilde(R)/(t))}\ 

We will divide the proof of Lemma \ref{lemma: dim(Tilde(R)/(t))} into the following several lemmas.

\begin{lemma}\label{Tilde(R)[t^(-1)]/(vu)) dense}
The scheme $\mathrm{Spec}\Tilde{R}(m)[t^{-1}]/(w_m)$ is dense in $\mathrm{Spec}\Tilde{R}(m)/(w_m)$. 
\end{lemma}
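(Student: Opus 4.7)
The plan is to reduce the density statement to a closed-point statement, then use \Cref{CV^o dense} to concentrate on $\mathcal{CV}(m)^{o}$, and finally construct an explicit one-parameter deformation for each such point. Writing $X = \mathrm{Spec}\Tilde{R}(m)/(w_{m})$ and $U = \mathrm{Spec}\Tilde{R}(m)[t^{-1}]/(w_{m})$, density of $U$ in $X$ is equivalent to $t$ lying in no minimal prime of $\Tilde{R}(m)/(w_{m})$. Since $X$ is of finite type over $\mathbb{K}$, it is Jacobson, so it is enough to show that every closed point of $X$ lies in $\overline{U}$. Points not in $V(t)$ are automatically in $U$, so only the closed points of $V(t, w_{m}) = \mathcal{CV}(m)$ require attention, and by \Cref{CV^o dense} it suffices to prove $\mathcal{CV}(m)^{o} \subseteq \overline{U}$.

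Fix $P \in \mathrm{im}(\psi_{m, m_{1}, m_{2}})$. Because the conjugation action of $GL_{m}$ leaves both $X$ and $U$ stable, I may conjugate and assume $A = \mathrm{diag}(aI_{m_{1}}, a_{1}, \ldots, a_{m-m_{1}})$, $B = \mathrm{diag}(b_{1}, \ldots, b_{m-m_{2}}, bI_{m_{2}})$, $\alpha = (\alpha', 0)^{t}$ supported in the first $m_{1}$ coordinates, and $\beta = (0, \beta')$ supported in the last $m_{2}$ coordinates, with $(a, a_{1}, \ldots, a_{m-m_{1}})$ and $(b, b_{1}, \ldots, b_{m-m_{2}})$ each pairwise distinct. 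Writing $A = \mathrm{diag}(\mu_{1}, \ldots, \mu_{m})$, the inequality $m_{1} + m_{2} \leq m$ forces the supports of $\alpha$ and $\beta$ to be disjoint, so $\beta\alpha = 0$ and every possibly nonzero entry $(\alpha\beta)_{ij}$ lies in a position with $\mu_{i} = a \neq \mu_{j}$.

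The deformation fixes $A, \alpha, \beta, t_{1} = a, t_{2} = b$ and varies only $B$ and $t$. I solve the Sylvester equation $AC - CA = \alpha\beta$ entrywise: it reduces to $(\mu_{i} - \mu_{j}) c_{ij} = (\alpha\beta)_{ij}$, whose nonzero right-hand sides all appear at indices with $\mu_{i} - \mu_{j} \neq 0$. I define $c_{ij}$ from this relation in those positions and set all remaining entries of $C$ to zero. The only nonzero rows of $C$ then lie in $\{1, \ldots, m_{1}\}$, whereas $\beta$ vanishes on those coordinates, so the supplementary identity $\beta C = 0$ is automatic.

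Consider the morphism $\mathbb{A}^{1} \to X$ sending $s \mapsto (A, B + sC, \alpha, \beta, a, b, s)$. Every defining relation of $\Tilde{R}(m)/(w_{m})$ is satisfied: $[A, B + sC] = s[A, C] = s\,\alpha\beta$, which matches $t\,\alpha\beta$ when $t = s$; $(A - aI)\alpha = 0$; $\beta(B + sC - bI) = s\,\beta C = 0$; and $\beta\alpha = 0$. This morphism sends $0$ to $P$ and sends $\mathbb{A}^{1} \setminus \{0\}$ into $U$, hence $P \in \overline{U}$. Running over all $(m_{1}, m_{2})$ and all $P \in \mathrm{im}(\psi_{m, m_{1}, m_{2}})$ yields $\mathcal{CV}(m)^{o} \subseteq \overline{U}$. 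The only substantive technical point is the construction of $C$, and the hypothesis $m_{1} + m_{2} \leq m$ built into $\psi_{m, m_{1}, m_{2}}$ is exactly what makes the Sylvester solution available and compatible with $\beta C = 0$.
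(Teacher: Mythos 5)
Your proof is correct and takes essentially the same approach as the paper's: reduce via \Cref{CV^o dense} to points of $\mathcal{CV}(m)^{o}$ and exhibit each such point as the limit at $s=0$ of a one-parameter family in $\mathrm{Spec}\Tilde{R}(m)/(w_m)$ with $t=s\neq 0$ away from the origin. The only difference is cosmetic: you perturb $B$ by solving $AC-CA=\alpha\beta$ (using that the eigenvalue $a$ of $A$ on the support of $\alpha$ differs from the eigenvalues on the support of $\beta$), whereas the paper perturbs $A$ by solving the symmetric Sylvester equation against $B$; both rely on $m_1+m_2\leq m$ in exactly the same way.
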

\begin{proof}
Let $\mathcal{MV}=\{(A,B,\alpha,\beta,a,b,c)\in M_{m}(\mathbb{K})^2\times M_{m\times 1}(\mathbb{K})\times M_{1\times m}(\mathbb{K})\times \mathbb{K}^3\mid AB-BA=c\alpha\beta,A\alpha=a\alpha,\beta B=b\beta,\beta\alpha=0\}$ and $\mathcal{MV}^o=\mathcal{MV}\setminus \mathcal{CV}(m)$.
For $0\leq m_1,m_2\leq m$ and $m_1+m_2\leq m$, 
and  
\[
A=\mathrm{diag}(a I_{m_1},a_1,\dots,a_{m-m_1}),B=\mathrm{diag}(b_{1},\dots,b_{m-m_2},
bI_{m_2}),
\]
and $\alpha=(\alpha',0)^t,\beta=(0,\beta')$, where $a,b,a_1,\dots,a_{m-m_1},b_1,\dots,b_{m-m_2}\in \mathbb{K}$ are pairwise distinct, $\alpha'=(\alpha_1',\dots,\alpha_{m_1}')\in \mathbb{K}^{m_1},,\beta'=(\beta_1',\dots,\beta_{m_2}')\in \mathbb{K}^{m_2}$, thus $(A,B,\alpha,\beta,a,b)\in \mathcal{CV}(m)$.
For any $c\in \mathbb{K}$, let 
\[
A(c)=A+\begin{pmatrix}
    0& \left(\frac{\alpha_i'\beta_j'}{b-b_i}c\right)_{1\leq i\leq m_1,1\leq j\leq m_2}\\
    0&0
\end{pmatrix},
\]
thus $A(c)B-BA(c)=c\alpha\beta$ and $A(c)\alpha=a\alpha$ since $m_1+m_2\leq m$, thus $(A(c),B,\alpha,\beta,a,b,c)\in \mathcal{MV}$.
Let $D=\mathbb{K}\setminus\{0\}$.
For any $c\in D$, we have $(A(c),B,\alpha,\beta,a,b,c)\in \mathcal{MV}^o$.
Since $D$ is dense in $\mathbb{K}$,  $(A,B,\alpha,\beta,a,b,0)\in\overline{\mathcal{MV}^o}$.
Thus $\mathcal{CV}(m)\subseteq\overline{\mathcal{MV}^o}$ by Lemma \ref{CV^o dense}. 
So $\mathrm{Spec}\Tilde{R}(m)[t^{-1}]/(w_m)$ is dense in $\mathrm{Spec}\Tilde{R}(m)/(w_m)$. 
\end{proof}

Let 
\[
\pi: \mathcal{U}=\mathrm{Spec}\Tilde{R}(m)/(w_m)\to \mathrm{Spec}\mathbb{K}[t]
\]
be the natural map, and $U=\mathrm{Spec}\mathbb{K}[t,t^{-1}]$ and $p_i$ be the closed point of $\mathrm{Spec}\mathbb{K}[t]$ corresponding to the ideal $(t-i)$ for $i=0,1$.
We can define $\mathbb{G}_m$-actions on $\mathcal{U}$ and $\mathrm{Spec}\mathbb{K}[t]$ in a natural way such that $U$ and ${\pi}^{-1}(U)$ are stable under these actions, and that $\pi$ is $\mathbb{G}_m$-equivariant. 
Similar to the proof of Lemma \ref{phi_i flat}, we can see that $\pi|_{\pi^{-1}(U)}$ is flat and $\pi^{-1}(p_1)\cong \pi^{-1}(p')$ for any closed point $p'\in U$.

\begin{lemma}\label{proof of dim(Tilde(R)/(vu))}
     The scheme $\mathrm{Spec}\Tilde{R}(m)/(w_m)$ is equidimensional of dimension $m^2+m+3$.
\end{lemma}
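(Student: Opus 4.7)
My plan is to sandwich $\dim Z = m^{2}+m+3$ for every irreducible component $Z$ of $\mathcal{U} := \mathrm{Spec}\Tilde{R}(m)/(w_{m})$. The main technical obstacle is ensuring that every such component meets $V(t)$; without this property the upper-bound argument below would fail for a component contained entirely in the open locus $\{t\neq 0\}$. To supply it, I would endow $\mathbb{K}[x_{ij},y_{ij},u_{i},v_{i},t_{1},t_{2},t]$ with the positive $\mathbb{Z}$-grading $\deg x_{ij}=\deg y_{ij}=\deg t_{1}=\deg t_{2}=\deg t=2$ and $\deg u_{i}=\deg v_{i}=1$, and check directly that every entry of $[X_{m},Y_{m}]-t\alpha\beta$ (degree $4$), $(X_{m}-t_{1}I_{m})\alpha$ (degree $3$), $\beta(Y_{m}-t_{2}I_{m})$ (degree $3$), together with $w_{m}$ (degree $2$), is homogeneous. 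Then $\Tilde{R}(m)/(w_{m})$ inherits a positive grading with irrelevant ideal $\mathfrak{m}_{+}=(x_{ij},y_{ij},u_{i},v_{j},t_{1},t_{2},t)$, and by \Cref{lemma: prime divisor homogeneous} every minimal prime is homogeneous, hence contained in $\mathfrak{m}_{+}$, so every irreducible component of $\mathcal{U}$ contains the closed point $V(\mathfrak{m}_{+})$ and in particular meets $V(t)$.

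For the upper bound, let $Z=V(Q)$ with $Q$ a minimal prime of $A:=\Tilde{R}(m)/(w_{m})$. By \Cref{Tilde(R)[t^(-1)]/(vu)) dense} the open $\{t\neq 0\}$ is dense in $\mathcal{U}$, so $t\notin Q$. The quotient $A/Q$ is a finitely generated $\mathbb{K}$-algebra integral domain, hence catenary and equidimensional; Krull's Hauptidealsatz gives $\dim A/(Q+(t))=\dim A/Q-1$. Since $A/(Q+(t))$ is a quotient of $\Tilde{R}(m)/(w_{m},t)$, whose dimension is $m^{2}+m+2$ by \Cref{lemma: dim(Tilde(R)/(vu,t))}, we obtain $\dim Z\leq m^{2}+m+3$.

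For the lower bound, the substitution $v_{j}\mapsto s^{-1}v_{j}$, $t\mapsto s$ (fixing the remaining generators) defines a ring isomorphism $\Tilde{R}(m)[t^{-1}]\xrightarrow{\sim}(\Tilde{R}(m)/(t-1))[s,s^{-1}]$; one checks that inverting $t$ turns each defining relation of $\Tilde{R}(m)$ into its ``$t=1$'' counterpart up to a unit. Hence the irreducible components of $\mathrm{Spec}\Tilde{R}(m)[t^{-1}]$ are precisely $Z_{1}\times\mathbb{G}_{m}$ for $Z_{1}$ an irreducible component of $\mathrm{Spec}\Tilde{R}(m)/(t-1)$, each of dimension $\dim Z_{1}+1$. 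By \Cref{lemma:connection_among_rings}, $\Tilde{R}(m)/(t-1)\cong R(m+1)/(x_{i,m+1},y_{m+1,i}\mid 1\leq i\leq m)$, and $R(m+1)$ is irreducible of dimension $m^{2}+3m+2$ by \Cref{lemma:Motzkin-Taussky}; Krull's Hauptidealsatz applied to the $2m$ generators $x_{i,m+1},y_{m+1,i}$ yields $\dim Z_{1}\geq m^{2}+m+2$. Since $Z\cap\{t\neq 0\}$ is a nonempty open of the irreducible $Z$ and coincides with some $Z_{1}\times\mathbb{G}_{m}$, we conclude $\dim Z=\dim Z_{1}+1\geq m^{2}+m+3$. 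Combining with the upper bound gives $\dim Z=m^{2}+m+3$ for every component, which is the desired equidimensionality.
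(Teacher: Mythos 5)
Your proof is correct in outline and arrives at the same conclusion through essentially the same architecture as the paper: bound $\dim Z$ above and below for each irreducible component $Z$. The upper-bound half is the same as the paper's (a positive grading shows every component contains a closed point of $V(t)$, density of $\{t\neq 0\}$ shows $t$ is a nonzerodivisor on each component, and the dimension of $\Tilde{R}(m)/(w_{m},t)$ from \Cref{lemma: dim(Tilde(R)/(vu,t))} then gives $\dim Z\leq m^{2}+m+3$; the paper uses a slightly different non-negative grading with $\deg u_{i}=0$, but the mechanism is identical). The lower-bound half is where you diverge: the paper establishes flatness of $\pi|_{\pi^{-1}(U)}$ via \Cref{lemma:G_equivariant_implies_flat} and a $\mathbb{G}_{m}$-equivariance argument, then reads off $\dim Z=\dim\pi^{-1}(\pi(q))+1$ at a suitable closed point; you instead exhibit the explicit trivialization $\Tilde{R}(m)[t^{-1}]\cong(\Tilde{R}(m)/(t-1))[s,s^{-1}]$ and use the product decomposition of its components. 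These are the same underlying phenomenon (this is exactly the $\mathbb{G}_{m}$-torsor the paper's flatness argument exploits, and the same substitution appears verbatim in the paper's proof of \Cref{Tilde(R)/(t-1)}), but your version avoids invoking the general flatness lemma and is arguably more transparent.

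There is one small unjustified step in your lower bound. You write that $Z\cap\{t\neq 0\}$ ``coincides with some $Z_{1}\times\mathbb{G}_{m}$'' where $Z_{1}$ is a component of $\mathrm{Spec}\Tilde{R}(m)/(t-1)$. But $Z$ is a component of $\mathcal{U}=\mathrm{Spec}\Tilde{R}(m)/(w_{m})$, so a priori $Z\cap\{t\neq 0\}$ is a component of $\mathrm{Spec}\Tilde{R}(m)[t^{-1}]/(w_{m})$, not of $\mathrm{Spec}\Tilde{R}(m)[t^{-1}]$. The identification of the two requires knowing that $w_{m}$ already vanishes in $\Tilde{R}(m)[t^{-1}]$. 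This is true but should be said: taking the trace of the defining relation $[X_{m},Y_{m}]-t\alpha\beta$ gives $0-t\sum_{i}u_{i}v_{i}=-tw_{m}\in J(m)$, so $tw_{m}=0$ in $\Tilde{R}(m)$ and hence $w_{m}=0$ after inverting $t$. (Equivalently, one may note $w_{m}=0$ in $\Tilde{R}(m)/(t-1)$ directly, so $\Tilde{R}(m)/(t-1,w_{m})=\Tilde{R}(m)/(t-1)$.) Without this observation, applying Krull's theorem to $\Tilde{R}(m)/(t-1,w_{m})$ naively would use $2m+1$ elements and only give $\dim Z_{1}\geq m^{2}+m+1$, one short of what you need, so this is genuinely load-bearing. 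Once it is supplied, your argument is complete.
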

\begin{proof}
By Lemma \ref{lemma:connection_among_rings}, we have
\[
R(m+1)/(x_{i,m+1},y_{m+1,i}\mid 1\leq i\leq m)\cong \Tilde{R}(m)/(t-1).
\]
By Lemma \ref{lemma:Motzkin-Taussky}, the scheme $\mathrm{Spec}R(m+1)$ is irreducible of dimension $(m+1)^2+m+1$, hence each irreducible component of $\pi^{-1}(p_1)=\mathrm{Spec}\Tilde{R}(m)/(t-1)$ has dimension $\geq m^2+m+2$.

Let $Z$ be an irreducible component of $\mathcal{U}$. 
Since $\pi^{-1}(U)$ is dense in $\mathcal{U}$ by Lemma \ref{Tilde(R)[t^(-1)]/(vu)) dense}, we can take a closed point $q\in Z\cap \pi^{-1}(U)$ such that $\dim(\mathcal{O}_{Z,q})=\dim(\mathcal{O}_{\mathcal{U},q})$.
Since $\pi|_{\pi^{-1}(U)}$ is flat and $q\in \pi^{-1}(U)$, we have
\[
\dim(Z)=\dim(\mathcal{O}_{\mathcal{U},q})=\dim(\mathcal{O}_{\pi^{-1}(\pi(q)),q})+1\geq m^2+m+3.
\]

On the other hand, the ring $\Tilde{R}(m)/(w_m)=\bigoplus_{i=0}^{\infty}\Tilde{R}_i$ is graded, where $\Tilde{R}_i$ is the set of the image of 
\[
\sum_{i_1+\cdots+i_j=i,i_k\geq 0}f_{i_1\dots i_j}\prod_{k=1}^jz_{k}^{i_k}
\]
in $\Tilde{R}(m)/(w_m)$ where $f_{i_1\dots i_j}\in \mathbb{K}[u_1,\dots,u_m]$ and 
\[
z_k\in\{x_{11},\dots,x_{mm},y_{11},\dots,y_{mm},v_1,\dots,v_m,t_1,t_2,t\}.
\]
By Lemma \ref{lemma: prime divisor homogeneous}, every minimal prime ideal of $\Tilde{R}(m)/(w_m)$ is homogeneous, thus it is contained in the irrelevant ideal $\bigoplus_{i>0}\Tilde{R}_i$.
Hence $Z\cap \pi^{-1}(p_{0})\neq \emptyset$.
By Lemma \ref{Tilde(R)/(vu,t) regular point}, we have $\dim\pi^{-1}(p_0)=m^2+m+2$, thus
\[
\dim(Z)\leq \dim\pi^{-1}(p_0)+1=m^2+m+3.
\]
Hence $\dim(Z)=m^2+m+3$.
So $\mathcal{U}$ is equidimensional of dimension $m^2+m+3$.
\end{proof}

\begin{lemma}\label{proof of dim(Tilde(R)/(t-1))}
     The scheme  $\mathrm{Spec}\Tilde{R}(m)/(t-1)$ is equidimensional of dimension $m^2+m+2$ and is regular at all generic points.
\end{lemma}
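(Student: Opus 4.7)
My plan is to deduce the statement from the stronger claim that $\mathrm{Spec}\Tilde{R}(m)/(w_m)$ is itself equidimensional of dimension $m^2+m+3$ and regular at all generic points; the equidimensionality half is Lemma \ref{proof of dim(Tilde(R)/(vu))}, and the regularity half I will establish here. The bridge is the identity $tw_m = 0$ in $\Tilde{R}(m)$, obtained by taking the trace of the matrix relation $[X_m,Y_m] = t(u)(v)$ and using $\mathrm{tr}[X_m,Y_m] = 0$. Thus $w_m$ automatically vanishes wherever $t$ is invertible, so
\[
\mathrm{Spec}\Tilde{R}(m)/(t-1) \;=\; \mathrm{Spec}\Tilde{R}(m)/(w_m,\,t-1) \;=\; \pi^{-1}(p_1),
\]
the fiber of the projection $\pi: \mathrm{Spec}\Tilde{R}(m)/(w_m) \to \mathrm{Spec}\mathbb{K}[t]$ from the paragraph preceding Lemma \ref{proof of dim(Tilde(R)/(vu))}.

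For equidimensionality, the $\mathbb{G}_m$-equivariance of $\pi|_{\pi^{-1}(U)}$ together with the section $\pi^{-1}(p_1)\hookrightarrow\pi^{-1}(U)$ trivializes the torsor, giving a $\mathbb{K}$-scheme isomorphism
\[
\pi^{-1}(p_1) \times \mathbb{G}_m \;\xrightarrow{\sim}\; \pi^{-1}(U), \qquad (z,\lambda)\longmapsto \lambda\cdot z,
\]
with inverse $z'\mapsto (\pi(z')^{-1}\cdot z',\,\pi(z'))$. By Lemma \ref{Tilde(R)[t^(-1)]/(vu)) dense} the open subset $\pi^{-1}(U)$ is dense in $\mathrm{Spec}\Tilde{R}(m)/(w_m)$, so it is equidimensional of dimension $m^2+m+3$ by Lemma \ref{proof of dim(Tilde(R)/(vu))}; stripping off the $\mathbb{G}_m$ factor shows $\pi^{-1}(p_1)$ is equidimensional of dimension $m^2+m+2$.

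For regularity at generic points, I first verify it for $\mathrm{Spec}\Tilde{R}(m)/(w_m)$: the candidate smooth closed points are $(P_{m_1m_2},0)$, where the $P_{m_1m_2}$ are the explicit points of Lemma \ref{Tilde(R)/(vu,t) regular point} and $0$ is the appended $t$-coordinate. The Jacobian at $(P_{m_1m_2},0)$, computed over the polynomial ring in all $2m^2+2m+3$ variables, is the Jacobian from Lemma \ref{Tilde(R)/(vu,t) regular point} with one additional column of $\partial/\partial t$-derivatives. Adding a column cannot decrease the rank, which therefore is at least $m^2+m$; on the other hand the rank cannot exceed $(2m^2+2m+3)-(m^2+m+3)=m^2+m$, since the local Krull dimension is $m^2+m+3$. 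Hence the rank is exactly $m^2+m$ and $(P_{m_1m_2},0)$ is regular. Each irreducible component $Z$ of $\mathrm{Spec}\Tilde{R}(m)/(w_m)$ contains one such point, because $Z$ meets the special fiber $\pi^{-1}(p_0)$ by the homogeneity argument from Lemma \ref{proof of dim(Tilde(R)/(vu))}, and $Z\cap\pi^{-1}(p_0)$ has dimension $m^2+m+2=\dim\pi^{-1}(p_0)$, so it is a union of components of the special fiber --- each of which contains some $P_{m_1m_2}$ by Lemma \ref{CV^o dense}. Finally, regularity at generic points descends through the product decomposition $\pi^{-1}(U)\cong\pi^{-1}(p_1)\times\mathbb{G}_m$ (being preserved under dense-open inclusion and under removal of a smooth factor), giving regularity at generic points of $\mathrm{Spec}\Tilde{R}(m)/(t-1)$.

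The main obstacle will be the component matching in the previous paragraph, ensuring that every irreducible component of $\mathrm{Spec}\Tilde{R}(m)/(w_m)$ contains some $(P_{m_1m_2},0)$; once this is granted, the Jacobian rank bound and the $\mathbb{G}_m$-torsor trivialization finish the argument.
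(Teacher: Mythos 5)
Your proof is correct and takes essentially the same route as the paper's. Both establish regularity of $\mathrm{Spec}\Tilde{R}(m)/(w_m)$ at all generic points by showing that every irreducible component meets the special fiber $\pi^{-1}(p_0)$ in a full component and contains some regular point $(P_{m_1m_2},0)$, and both then transport the result to the fiber $\pi^{-1}(p_1)$ using the density of $\pi^{-1}(U)$. The two cosmetic differences are that you obtain equidimensionality of $\pi^{-1}(p_1)$ by explicitly trivializing $\pi^{-1}(U)\cong\pi^{-1}(p_1)\times\mathbb{G}_m$ and removing the $\mathbb{G}_m$ factor, where the paper invokes flatness of $\pi|_{\pi^{-1}(U)}$ and the fiber-dimension formula directly; and for the descent of regularity the paper passes to the generic fiber $\pi^{-1}(\eta)\cong\pi^{-1}(p_1)\times_{\mathbb{K}}\mathrm{Spec}\,\mathbb{K}(t)$, whereas you use the same product decomposition one step earlier. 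Your Jacobian rank argument (old Jacobian plus one column, rank squeezed to exactly $m^2+m$ by the dimension bound from Lemma \ref{proof of dim(Tilde(R)/(vu))}) is a nice explicit version of the paper's ``similar to the proof of Lemma \ref{Tilde(R)/(vu,t) regular point}.'' Your explicit observation that $tw_m=0$ in $\Tilde{R}(m)$, justifying $\pi^{-1}(p_1)=\mathrm{Spec}\Tilde{R}(m)/(t-1)$, is left implicit in the paper and is worth stating.
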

\begin{proof}
By Lemma \ref{proof of dim(Tilde(R)/(vu))}, the scheme $\mathcal{U}$ is equidimensional of dimension $m^2+m+3$. Since $\pi|_{\pi^{-1}(U)}$ is flat, the fiber $\pi^{-1}(p_1)=\mathrm{Spec}\Tilde{R}(m)/(t-1)$ is equidimensional of dimension $m^2+m+2$.

Let $Z$ be an irreducible component of $\mathcal{U}$. 
By Lemma \ref{Tilde(R)/(vu,t) regular point} and the proof of Lemma \ref{proof of dim(Tilde(R)/(vu))}, we have $\dim(Z\cap \pi^{-1}(p_0))\geq m^2+m+2=\dim\pi^{-1}(p_0)$.
Thus $Z$ contains an irreducible component of $\pi^{-1}(p_0)$.
Let $\varphi:\pi^{-1}(p_0)\to \mathcal{U}$ be the natural map, and let $P'_{m_1m_2}=\varphi(P_{m_1m_2})$ for $m_1,m_2\geq 0$ and $m_1+m_2\leq m$, where $P_{m_1m_2}$ is defined in the proof of Lemma \ref{Tilde(R)/(vu,t) regular point}. 
Similar to the proof of Lemma \ref{Tilde(R)/(vu,t) regular point}, we see that 
$\mathcal{U}$ is regular at $P'_{m_1m_2}$, thus is regular at all generic points.

By Lemma \ref{Tilde(R)[t^(-1)]/(vu)) dense}, $\pi^{-1}(U)$ is dense in $\mathcal{U}$, therefore $Z'$ is an irreducible component of $\mathcal{U}$ if and only if $Z'\cap \pi^{-1}(\eta)$ is an irreducible component of $\pi^{-1}(\eta)$, where $\eta$ is the generic point of $\mathrm{Spec}\mathbb{K}[t]$.
Let $\eta'$ be the generic point of $Z$. Thus $\eta'\in \pi^{-1}(\eta)$ and $\mathcal{O}_{\mathcal{U},\eta'}$ is regular, and therefore $\mathcal{O}_{\pi^{-1}(\eta),\eta'}=S^{-1}\mathcal{O}_{\mathcal{U},\eta'}$ is regular, where $S=\mathbb{K}[t]\setminus \{0\}$.
So the generic points of $\pi^{-1}(\eta)$ are regular.
Since $\pi^{-1}(\eta)\cong \pi^{-1}(p_1)\times_{\mathbb{K}}\mathrm{Spec}\mathbb{K}(t)$, the generic points of $\pi^{-1}(p_1)$ are regular.
\end{proof}

\begin{lemma}\label{proof of dim(Tilde(R)/t))}
     The scheme  $\mathrm{Spec}\Tilde{R}(m)/(t)$ is equidimensional of dimension $m^2+m+2$ and is regular at all generic points.
\end{lemma}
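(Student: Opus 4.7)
The plan is to decompose $\mathrm{Spec}\Tilde{R}(m)/(t)$ as the union of its open locus $\mathrm{Spec}\Tilde{R}(m)[w_{m}^{-1}]/(t)$ and closed complement $\mathrm{Spec}\Tilde{R}(m)/(t,w_{m})$, and establish the desired properties on each piece. The closed piece is handled directly by \Cref{Tilde(R)/(vu,t) regular point}, which gives equidimensionality of dimension $m^{2}+m+2$ and regularity at generic points.

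For the open piece, the key idea is the $\mathbb{G}_m$-action on $\Tilde{R}(m)$ sending $u_{i}\mapsto s u_{i}$ and $t\mapsto s^{-1}t$ (with the other generators fixed), which preserves all relations of $J(m)$ and scales $w_{m}\mapsto s w_{m}$. Arguing exactly as in the proof of \Cref{Tilde(R)/(t-1)}, this produces a $\mathbb{K}$-algebra isomorphism
\[
\Tilde{R}(m)[w_{m}^{-1}]/(t)\;\cong\;\Tilde{R}(m)/(w_{m}-1,t)[s,s^{-1}],
\]
so the analysis of the open piece reduces to that of $\Tilde{R}(m)/(w_{m}-1,t)$. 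To show the latter is equidimensional of dimension $m^{2}+m+1$ and regular at its generic points, I would perform a Jacobian calculation in close analogy to \Cref{Tilde(R)/(vu,t) regular point}: for each admissible pair $(m_{1},m_{2})$, I would evaluate at a closed point $(X,Y,\alpha,\beta,1,1)$ with $X,Y$ diagonal of distinct eigenvalues, $\alpha,\beta$ supported in complementary coordinate blocks and rescaled so that $\beta\alpha=1$, and verify that an appropriate invertible submatrix of the Jacobian witnesses the expected rank.

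To assemble the global statement, let $Z$ be any irreducible component of $\mathrm{Spec}\Tilde{R}(m)/(t)$. If $Z\not\subseteq V(w_{m})$, then via the isomorphism above $Z$ corresponds to an irreducible component of $\Tilde{R}(m)/(w_{m}-1,t)[s,s^{-1}]$; hence $\dim Z=m^{2}+m+2$ and $Z$ is regular at its generic point. If $Z\subseteq V(w_{m})$, then by maximality $Z$ is also an irreducible component of $V(t,w_{m})$ and has dimension $m^{2}+m+2$. To upgrade regularity of $V(t,w_{m})$ at the generic point of such a $Z$ to regularity of $V(t)$, I would use the observation that the submatrix $Q$ in \Cref{Tilde(R)/(vu,t) regular point} realizing the rank $m^{2}+m$ at $P_{m_{1}m_{2}}$ uses only rows corresponding to $f_{ij}$, $g_{i}$, $h_{i}$ (not the $w_{m}$ row); the same submatrix therefore shows that $\mathrm{Spec}\Tilde{R}(m)/(t)$ is regular at each $P_{m_{1}m_{2}}$, and openness of the regular locus, combined with the fact that each such $Z$ contains some $P_{m_{1}m_{2}}$, completes the argument.

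The main obstacle I expect is the direct Jacobian analysis of $\Tilde{R}(m)/(w_{m}-1,t)$: although conceptually parallel to \Cref{Tilde(R)/(vu,t) regular point}, the calculation must catalog all the irreducible components of $\mathrm{Spec}\Tilde{R}(m)/(w_{m}-1,t)$ and produce on each a closed point whose Jacobian attains the predicted rank, with the role of the $w_{m}$ row played now by the $w_{m}-1$ row. A secondary subtle point is the promotion of regularity from $V(t,w_{m})$ to $V(t)$ at the generic points of the components contained in $V(w_{m})$, which I handle by the submatrix observation above.
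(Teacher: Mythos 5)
Your overall structure matches the paper: decompose $\mathrm{Spec}\Tilde{R}(m)/(t)$ into the closed piece $V(w_m)$ (handled by \Cref{Tilde(R)/(vu,t) regular point}) and the open piece $\Spec\Tilde{R}(m)[w_m^{-1}]/(t)$, then assemble. Your final paragraph — observing that the submatrix $Q$ of \Cref{Tilde(R)/(vu,t) regular point} involves only the $f_{ij},g_i,h_i$ rows and hence witnesses regularity of $\Spec\Tilde{R}(m)/(t)$ itself at each $P_{m_1m_2}$, and then invoking that each component in $V(w_m)$ contains some $P_{m_1m_2}$ — is exactly the paper's closing argument.

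Where you diverge is in the treatment of the open piece, and this is where there is a genuine gap. You propose the $\mathbb{G}_m$-grading (weight $1$ on $u_i$, weight $-1$ on $t$, weight $0$ elsewhere) to obtain $\Tilde{R}(m)[w_m^{-1}]/(t)\cong\Tilde{R}(m)/(w_m-1,t)[s,s^{-1}]$; that isomorphism is fine, and analogous to \Cref{Tilde(R)/(t-1)}. But you then say you would "catalog all the irreducible components of $\mathrm{Spec}\Tilde{R}(m)/(w_m-1,t)$" and do a Jacobian check at a point on each — and you flag this as the main obstacle without carrying it out. This is not a technicality that transfers over from \Cref{Tilde(R)/(vu,t) regular point}: the whole of Lemmas \ref{dim(im(psi))} through \ref{CV^o dense} was devoted to establishing the analogous component/dimension structure for $\cCV(m)$, which imposes $\beta\alpha=0$; your variety imposes $\beta\alpha=1$, which excludes the strata with $\alpha=0$ or $\beta=0$ and has no a priori reason to have the same parameterization of dense irreducible pieces. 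You would need a fresh analogue of the density statement $\overline{\cCV(m)^o}=\cCV(m)$ for the locus $w_m=1$, and nothing in your sketch supplies it.

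The paper avoids this entirely by a different mechanism: it sets up a $GL_m\times\mathbb{G}_m$-equivariant map $\pi':\Spec\Tilde{R}(m)[w_m^{-1}]/(t)\to U'=\Spec\mathbb{K}[u,v][w_m^{-1}]$, observes via \Cref{lemma:G_equivariant_implies_flat} (as in \Cref{phi_i flat}) that $\pi'$ is flat, and identifies the fiber over the standard point $(u,v)=(e_m,e_m^t)$ with $\Spec\Tilde{R}(m-1)/(t-1)$ using \Cref{lemma:connection_among_rings}. Equidimensionality and regularity at generic points then follow by transporting \Cref{proof of dim(Tilde(R)/(t-1))} (which was just proved) through the flat map, with regularity upgraded via \cite[Theorem 23.7]{Matsumura}. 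If you want your route to close, you should either carry out the component analysis of $V(w_m-1,t)$ (significant new work paralleling Section 5.1), or notice that after using $GL_m$ to normalize $u,v$ as above, you can relate $\Tilde{R}(m)/(w_m-1,t)$ to $\Tilde{R}(m-1)/(t-1)$ directly and reuse \Cref{proof of dim(Tilde(R)/(t-1))} — which is essentially the paper's fibration argument in disguise.
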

\begin{proof}

First, we will show that $\mathrm{Spec}\Tilde{R}(m)[w_m^{-1}]/(t)$ is equidimensional of dimension $m^2+m+2$ and is regular at all generic points.

Let $U'=\mathrm{Spec}\mathbb{K}[u_1,\dots,u_m,v_1,\dots,v_m,(\sum_{i=1}^mu_iv_i)^{-1}]$ and
\[
\pi': \mathcal{U}'=\mathrm{Spec}\Tilde{R}(m)[w_m^{-1}]/(t)\to U'
\]
be the natural map.
Let $ GL_{m}\times \mathbb{G}_m$ act on $\mathcal{U}'$ according to the rule $(g,c)\cdot (A,B,\alpha,\beta,a,b)= (cgAg^{-1},gBg^{-1},cg\alpha,\beta g^{-1},ca,b)$
for $(A,B,\alpha,\beta,a,b)\in\mathcal{U}'(\mathbb{K})$ and $ GL_{m}\times \mathbb{G}_m$ act on $U'$ according to the rule 
$(g,c)\cdot (\alpha,\beta)= (cg\alpha,\beta g^{-1})$
for $(\alpha,\beta)\in U'(\mathbb{K})$. 
Similar to the proof of Lemma \ref{phi_i flat}, we can see that $\pi'$ is flat and $\pi'^{-1}(p)\cong \pi'^{-1}(p')$ for any closed point $p'\in U'$, where $p$ is the closed point of $U'$ corresponding to the ideal $(u_{i},v_i,u_m-1,v_m-1\mid 1\leq i\leq m-1)$.
Hence $\pi'^{-1}(p)\cong \mathrm{Spec}R(m)/(x_{im},y_{mi}\mid 1\leq i\leq m-1)$. By Lemma \ref{lemma:connection_among_rings}, the fiber
\[
\pi'^{-1}(p)\cong \mathrm{Spec}\Tilde{R}(m-1)/(t-1),
\]
thus 
$\pi'^{-1}(p)$ is equidimensional of dimension $m^2-m+2$ and is regular at all generic points by Lemma \ref{proof of dim(Tilde(R)/(t-1))}.

Let $Z$ be an irreducible component of $\mathcal{U}'$.
We can take a closed point $q\in Z$ such that $\dim(\mathcal{O}_{Z,q})=\dim(\mathcal{O}_{\mathcal{U}',q})$ and $\pi'^{-1}(p)\cong \pi'^{-1}(\pi'(q))$.
Since $\pi'$ is flat, we have
\[\dim(Z)=\dim(\mathcal{O}_{\mathcal{U}',q})=\dim(\mathcal{O}_{\pi'^{-1}(\pi'(q)),q})+2m= m^2+m+2.
\]
Hence $\mathcal{U}'$ is equidimensional of dimension $m^2+m+2$.

Since $\dim(Z\cap \pi'^{-1}(\pi'(q)))\geq m^2-m+2$ and $\pi'^{-1}(p)\cong \pi'^{-1}(\pi'(q))$, $Z$ contains an irreducible component $Z'$ of $\pi'^{-1}(\pi'(q))$.
Let $p'\in Z'\subseteq Z$ be a regular point of $\pi'^{-1}(\pi'(q))$. Since $\mathcal{O}_{\pi'^{-1}(\pi'(q)),p'}$ and $\mathcal{O}_{U',\pi'(q)}$ are regular and $\pi'$ is flat, we see that $\mathcal{O}_{\mathcal{U}',p'}$ is regular by \cite[Theorem 23.7]{Matsumura}.
So $\mathcal{U}'$ is regular at all generic points.

Finally, since $\mathcal{U}'$ is equidimensional of dimension $m^2+m+2$ and
    \[
    \mathrm{Spec}\Tilde{R}(m)/(t)=\mathrm{Spec}\Tilde{R}(m)/(t,w_m)\bigcup \mathcal{U}',
    \]
the scheme $\mathrm{Spec}\Tilde{R}(m)/(t)$ is equidimensional of dimension $m^2+m+2$ by Lemma \ref{Tilde(R)/(vu,t) regular point}. Let $\eta$ be a generic point of $\mathrm{Spec}\Tilde{R}(m)/(t)$.
If $\eta\in \mathcal{U}'$, then since $\eta$ is a regular point of $\mathcal{U}'$, so is $\mathrm{Spec}\Tilde{R}(m)/(t)$.
If $\eta\in\mathrm{Spec}\Tilde{R}(m)/(t,w_m)$, we can take a closed point $P_{m_1m_2}$ of $\mathrm{Spec}\Tilde{R}(m)/(t)$, which is defined in the proof of Lemma \ref{Tilde(R)/(vu,t) regular point} and is in $\overline{\{\eta\}}$. Similar to
the proof of Lemma \ref{Tilde(R)/(vu,t) regular point}, $P_{m_1m_2}$ is a regular point of $\mathrm{Spec}\Tilde{R}(m)/(t)$. 
Hence $\mathrm{Spec}\Tilde{R}(m)/(t)$ is regular at all generic points.
\end{proof}

This completes the proof of Lemma \ref{lemma: dim(Tilde(R)/(t))}.
 
\begin{remark}(Summary)
We have proved that the commuting scheme $\mathfrak{C}^2_{\mathfrak{gl}_n}$ is Cohen–Macaulay and normal by induction on $n$ and establishing connections with several intermediate rings. In doing so, we  considered the $\mathfrak{gl}_n$ case. It would be natural to extend this construction directly to $\mathfrak{C}^2_{\mathfrak{so}_n}$, however there would be obstructions. For instance, let $\mathrm{char}\  \mathbb{K}=0$, $n>2$ and $\mathcal{O}_{\mathfrak{C}^2_{\mathfrak{so}_n}}(\mathfrak{C}^2_{\mathfrak{so}_n})=\mathbb{K}[x_{ij},y_{ij}]/I$, where $I$ is the ideal generated by all of the entries of the matrices $[X_n,Y_n]$, $X_n+X_n^t$ and $Y_n+Y_n^t$, where $X_n=(x_{ij})_{1\leq i,j\leq n},Y_n=(y_{ij})_{1\leq i,j\leq n}$. 
Consider the quotient ring $H=\mathcal{O}_{\mathfrak{C}^2_{\mathfrak{so}_n}}(\mathfrak{C}^2_{\mathfrak{so}_n})/(x_{in}\mid 1\leq i\leq n-1)$. Then the set of closed points of $\mathrm{Spec}H$ is isomorphic to $\mathcal{H}=\{(A,B,\alpha)\in \mathfrak{so}_{n-1}^2\times M_{(n-1)\times 1}(\mathbb{K})\mid AB=BA,A\alpha=0\}$.
We can prove that $\mathrm{Spec}H$ is not equidimensional. This is since the closed subset $\{(A,B,\alpha)\in \mathcal{H}\mid A=0\}$ has dimension $\frac{(n-1)(n-2)}{2}+n-1$, and the open subset $\{(A,B,\alpha)\in \mathcal{H}\mid A \text{ has only simple characteristic roots}\}$ has dimension $\frac{(n-1)(n-2)}{2}+\lceil \frac{n-1}{2}\rceil$, where $\lceil r\rceil=\min\{a\in \mathbb{Z}\mid a\geq r\}$ for any $r\in \mathbb{R}$.
There might be other quotient rings which can serve as intermediate rings to complete the induction step, however we have not yet found such suitable quotient rings. We would postpone this investigation to a sequel to this article. Finally, considering the image of the Hitchin morphism in positive characteristic might be an interesting venue to explore in the future. 
\end{remark}

\end{document}